\newcommand{\fr}{\mathfrak}
\newcommand{\cal}{\mathscr}
\newcommand{\op}{\operatorname}
\newcommand{\tn}{\textnormal}
\newcommand{\Spec}{\mathrm{Spec}}
\newcommand{\Ran}{\mathrm{Ran}}
\newtheorem{thmx}{Theorem}
\newtheorem{corx}[thmx]{Corollary}
\newtheorem{thm}{Theorem}[section]
\newtheorem{prop}[thm]{Proposition}
\newtheorem{lem}[thm]{Lemma}
\theoremstyle{definition}
\newtheorem{claim}[thm]{Claim}
\newtheorem{eg-eg}[thm]{Example of Example}
\newtheorem{rem}[thm]{Remark}
\newmdenv[
  topline=false,
  bottomline=false,
  rightline=false,
  skipabove=\topsep,
  skipbelow=\topsep
]{siderules}
\numberwithin{equation}{section}
\title{Extensions by $\mathbf K_2$ and factorization line bundles}
\author{James Tao \and Yifei Zhao}
\email{jamestao@mit.edu}
\email{yifei@math.harvard.edu}
\begin{document}

\begin{abstract}
Let $X$ be a smooth, geometrically connected curve over a perfect field $k$. Given a connected, reductive group $G$, we prove that central extensions of $G$ by the sheaf $\mathbf K_2$ on the big Zariski site of $X$, studied in Brylinski-Deligne \cite{BD01}, are equivalent to factorization line bundles on the Beilinson-Drinfeld affine Grassmannian $\op{Gr}_G$. Our result affirms a conjecture of Gaitsgory-Lysenko \cite{GL16} and classifies factorization line bundles on $\op{Gr}_G$.
\end{abstract}

\maketitle

\setcounter{tocdepth}{1}
\tableofcontents

\section*{Introduction}

This paper compares two kinds of data parametrizing metaplectic extensions of the Langlands program. One is $\tn K$-theoretic, and the other has to do with factorization structures on the affine Grassmannian $\op{Gr}_G$.

\smallskip

Let us first explain how these structures arise in the theory.

\subsection{$\tn K$-theoretic metaplectic parameters}

\subsubsection{} In the classical theory of automorphic forms, one starts with a global field $\mathbf F$ and a reductive group $G$ over it. Denote by $\mathbb A_{\mathbf F}$ the topological ring of ad\`eles of $\mathbf F$. The principal objects of interest are certain functions on the homogeneous space $G(\mathbb A_{\mathbf F})/G(\mathbf F)$. Roughly speaking, the goal of the Langlands program is to relate them to representations of $\tn{Gal}(\overline{\mathbf F}/\mathbf F)$ valued in the $\tn L$-group of $G$.

\subsubsection{} The study of automorphic forms has seen several generalizations, where one replaces $G(\mathbb A_{\mathbf F})$ by certain topological coverings. The first example of such a covering is the metaplectic group constructed by Weil \cite{We64}. These are double covers of the symplectic groups $\tn{Sp}_{2n}(\mathbf F_{\nu})$, for local fields $\mathbf F_{\nu}$, and combine into a cover of $\tn{Sp}_{2n}(\mathbb A_{\mathbf F})$ equipped with a section over $\tn{Sp}_{2n}(\mathbf F)$.

\subsubsection{}
The existence of interesting topological coverings is by no means restricted to the symplectic group. For any reductive group $G$, Brylinski--Deligne \cite{BD01} observed that a large class of coverings of $G(\mathbb A_{\mathbf F})$ arise from $\tn K$-theoretic data.

\smallskip

To explain their work more precisely, we let $\mathbf K_2$ denote the Zariski sheafification of the second algebraic $\tn K$-theory group. Brylinski--Deligne \cite{BD01} started with a central extension:
\begin{equation}
\label{eq-bd-data-fractional}
1 \rightarrow \mathbf K_2 \rightarrow E \rightarrow G \rightarrow 1
\end{equation}
of sheaves on the big Zariski site of $\mathbf F$. Using the Hilbert symbol on local fields $\mathbf F_{\nu}$, they produced topological central extensions $\widetilde G$ of $G(\mathbb A_{\mathbf F})$ by the group $\mu_{\mathbf F}$ of roots of unity in $\mathbf F$. As a consequence of the reciprocity law of the Hilbert symbol, the central extension $\widetilde G$ splits over $G(\mathbf F)$ \cite[\S10]{BD01}. This splitting makes it possible to define ``metaplectic'' automorphic forms as functions on $\widetilde G/ G(\mathbf F)$, equivariant against a character of $\mu_{\mathbf F}$ and satisfying certain analytic properties.

\subsubsection{} The main theorem of \emph{loc.cit.}~is that the groupoid of central extensions \eqref{eq-bd-data-fractional} admits a purely combinatorial description. Among other things, the Brylinski--Deligne theorem allows one to define the $\tn L$-group associated to such a central extension, as has been done by Weissman \cite{We15}. These works bring the study of metaplectic automorphic forms into the broader scope of the Langlands program.

\smallskip

It is thus reasonable to view central extensions by $\mathbf K_2$ as metaplectic parameters of the Langlands program and the resulting topological coverings $\widetilde G$ as ``metaplectic groups'' for $G(\mathbb A_{\mathbf F})$.

\subsection{Geometric metaplectic parameters}

\subsubsection{} Let us now specialize to the function field case, where a more geometric perspective in generalizing the Langlands program is available.

\smallskip

We fix a finite ground field $k$ and a smooth, proper, geometrically connected curve $X$ over $k$. The letter $\mathbf F$ will stand for the field of fractions of $X$. For simplicity, let us also assume that the reductive group $G$ is defined over $k$. In the function field setting, automorphic functions can be accessed via $\ell$-adic sheaves on the moduli stack $\tn{Bun}_G$ of principal $G$-bundles on $X$ (or certain variants thereof in the ramified situation).

\subsubsection{} The moduli stack $\tn{Bun}_G$ has a local avatar, known as the affine Grassmannian $\tn{Gr}_G$. It is an ind-scheme attached to each closed point $x$ of $X$ and comes equipped with a canonical map to $\tn{Bun}_G$. Unlike $\tn{Bun}_G$, the affine Grassmannian is not in general (ind-)smooth, and its singularities carry representation-theoretic meaning.

\smallskip

Following Beilinson--Drinfeld \cite{BD04}, one may also view $\tn{Gr}_G$ as an ind-scheme over the global curve $X$. As such, it possess an additional structure called ``factorization.'' Roughly speaking, factorization structure describes a fusion rule of the fibers of $\tn{Gr}_G$ as distinct points merge in $X$. A precise formulation makes use of the Ran space and will be recalled in \S\ref{sec-notations}.

\subsubsection{} The ind-scheme $\tn{Gr}_G$ plays a central role in the Langlands program. Indeed, the category of spherical sheaves $\tn{Sph}_G$, which are $\overline{\mathbb Q}_{\ell}$-sheaves on $\tn{Gr}_G$ equivariant with respect to the arc group (i.e., positive loop group), is equivalent to representations of the Langlands dual group $\check G$ under the geometric Satake equivalence of Mirkovi\'c--Vilonen \cite{MV07}. Their proof uses the factorization structure of $\tn{Gr}_G$ to construct the symmetry constraint for the convolution monoidal structure on $\tn{Sph}_G$.

\subsubsection{} Motivated by these considerations, Gaitsgory--Lysenko \cite{GL16} proposed to define \emph{geometric metaplectic parameters} as \'etale gerbes over $\tn{Gr}_G$ banded by a suitable torsion abelian group $A\subset\overline{\mathbb Q}_{\ell}^{\times}$, which furthermore respect its factorization structure. These objects are called \emph{factorization gerbes}.

\smallskip

A factorization gerbe allows one to form a ``twisted'' (or ``metaplectic'') category of $\ell$-adic sheaves on $\tn{Gr}_G$. Furthermore, it is possible to replicate the Mirkovi\'c--Vilonen proof in order to construct the metaplectic geometric Satake equivalence and formulate a vanishing conjecture in the metaplectic geometric Langlands program \cite[\S9-10]{GL16}.

\subsection{Relationship between the two}

\subsubsection{}
Let us now turn to the problem addressed in the present paper, which is a comparison of the above two kinds of metaplectic parameters. Since the problem is independent of the global geometry of $X$, we shall formulate it for any smooth, geometric connected curve $X$ (i.e., \emph{not} necessarily proper) over a perfect ground field $k$.

\smallskip
 
We shall consider the groupoid of central extensions of $G$ by $\mathbf K_2$, over the big Zariski site of $X$ rather than its field of fractions\footnote{Any central extension over $\mathbf F$ extends to one over $X_1$ for some open $X_1\subset X$. Any two such extensions to $X_1$ become canonically isomorphic over some open $X_2\subset X_1$.}. We denote this groupoid by $\mathbf{CExt}(G, \mathbf K_2)$.

\subsubsection{} Subject to a restriction on $\tn{char}(k)$, Gaitsgory \cite{Ga18} defined a functor from the groupoid $\mathbf{CExt}(G, \mathbf K_2)$ to the (2-)category $\mathbf{Ge}_{A}^{\tn{fact}}(\tn{Gr}_G)$ of factorization gerbes on $\tn{Gr}_G$. It is a composition of two functors:
\begin{align*}
\mathbf{CExt}(G, \mathbf K_2) \xrightarrow{\Phi_G} &\; \mathbf{Pic}^{\tn{fact}}(\tn{Gr}_G) \\
& \xrightarrow{\tn{Kum.}} \mathbf{Ge}^{\tn{fact}}_{A}(\tn{Gr}_G).
\end{align*}
Here, $\mathbf{Pic}^{\tn{fact}}(\tn{Gr}_G)$ denotes the groupoid of factorization line bundles, i.e., line bundles on $\tn{Gr}_G$ which respect its factorization structure. The second functor is a standard construction using the Kummer exact sequence (where we fix an element in $A(-1)$.) The first functor $\Phi_G$, a kind of residue map on algebraic $\tn K$-theory, is more interesting. To wit, it relates $\tn K$-theoretic data to purely geometric objects. The comparison of the two kinds of metaplectic parameters thus amounts to understanding the behavior of $\Phi_G$.

\smallskip

The restriction on $\tn{char}(k)$ enters in the definition of $\Phi_G$---it states that $\tn{char}(k)$ cannot divide a certain integer $N_G$ which depends on $G$. The integer $N_G$ is the index of the subgroup of the group of Weyl-invariant, integral quadratic forms on the co-weight lattice generated by Chern classes of representations $G \rightarrow \tn{GL}(V)$ (see \cite[\S0.1.8]{Ga18}). Tautologically, the condition $\tn{char}(k)\nmid N_G$ is satisfied when $G$ is a product of general linear groups (or when $\tn{char}(k) = 0$).

\subsubsection{Main result} We can now state our main result, which asserts that $\Phi_G$ is an equivalence of categories whenever it is defined. It will appear as Theorem \ref{thm-main} in the main text.

\begin{thmx}
\label{thmx-main}
Suppose $k$ is a perfect field, $X$ is a smooth, geometrically connected curve and $G$ is a connected reductive group over $k$. If $\tn{char}(k)\nmid N_G$, then $\Phi_G$ is an equivalence:
$$
\Phi_G : \mathbf{CExt}(G, \mathbf K_2) \xrightarrow{\sim} \mathbf{Pic}^{\op{fact}}(\op{Gr}_G).
$$
\end{thmx}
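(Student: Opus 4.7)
The plan is to prove the equivalence via a parallel dévissage of both sides, reducing to the cases where $G$ is a torus or $G$ is simply connected semisimple. On the Brylinski--Deligne side this reduction is essentially their main classification theorem in \cite{BD01}: an object of $\mathbf{CExt}(G, \mathbf K_2)$ amounts to a compatible triple $(E_T, Q, \phi)$, where $E_T \in \mathbf{CExt}(T, \mathbf K_2)$ is an extension of a maximal torus $T$, $Q$ is a Weyl-invariant $\mathbb Z$-valued quadratic form on the cocharacter lattice $\Lambda_T$ controlling the restriction of $E_T$, and $\phi$ is gluing data along the root $\op{SL}_2$'s. The first step is to establish an analogous presentation of $\mathbf{Pic}^{\op{fact}}(\op{Gr}_G)$ via the pullback functors along $\op{Gr}_T \to \op{Gr}_G$ and $\op{Gr}_{G^{\op{sc}}} \to \op{Gr}_G$, and to verify that $\Phi_G$ respects both presentations piece by piece.

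The torus case is the central computation. For $T$ with cocharacter lattice $\Lambda$, the ind-scheme $\op{Gr}_T$ over the Ran space decomposes into components indexed by $\Lambda$-valued divisors on $X$, so a factorization line bundle on it should be classified by theta-type data: a biextension of $\Lambda \otimes \Lambda$ by $\mathbf G_m$ over $X \times X$, whose diagonal encodes the quadratic behavior under addition of divisors. This should match the Brylinski--Deligne classification of $\mathbf{CExt}(T, \mathbf K_2)$ by an integral quadratic form together with a compatible extension of $\Lambda$ by $\mathbf G_m$ on $X$, and the functor $\Phi_T$ should implement the match via the tame symbol on $\mathbf K_2$. I would directly check fully faithfulness and essential surjectivity at this level.

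For the simply connected case, both sides are controlled by a single invariant---the Weyl-invariant $\mathbb Z$-valued quadratic form on the coroot lattice. Here the hypothesis $\op{char}(k) \nmid N_G$ enters crucially: the Chern classes of representations $G \to \op{GL}(V)$ generate a sublattice of index $N_G$ in the Weyl-invariant quadratic forms, and the divisibility hypothesis is what allows every such form to be realized on the factorization side, giving essential surjectivity. The main obstacle I expect is the final gluing step, which assembles the torus and simply connected pieces compatibly with the Weyl action and the fundamental group $\pi_1(G^{\op{der}})$. I anticipate this reduces, via the root subgroups, to an explicit cocycle matching inside $\op{SL}_2$, where the $\mathbf K_2$-residue and the factorization structure both admit hands-on descriptions in terms of the tame symbol.
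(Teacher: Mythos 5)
Your overall shape---d\'evissage to the torus and simply connected cases, anchored on the Brylinski--Deligne classification---matches the paper's strategy, but two of your key steps contain genuine errors or omissions. The most serious is in the simply connected case: you write that the hypothesis $\op{char}(k)\nmid N_G$ ``is what allows every such form to be realized on the factorization side, giving essential surjectivity.'' This misreads the role of $N_G$. That hypothesis is needed only so that Gaitsgory's functor $\Phi_G$ is \emph{defined}; moreover, Chern classes of representations generate a subgroup of \emph{index} $N_G$ in $Q(\Lambda_T,\mathbb Z)^W$, and no divisibility condition on the characteristic turns that subgroup into the whole lattice. Essential surjectivity of $\mathbf{Pic}^{\op{fact}}(\op{Gr}_G)\to Q(\Lambda_T,\mathbb Z)^W$ for simply connected $G$ requires a different input entirely: Faltings' computation that $\mathbf{Pic}^e(\op{Gr}_{G,x})\cong\mathbb Z$, generated by a line bundle $\cal L_{\op{min}}$ with $\cal L_{\op{min}}^{\otimes 2\check h}\cong\cal L_{\det}$, together with the fact that a factorization line bundle on $\op{Gr}_G$ is determined by its restriction to one point of $X$ (which itself rests on discreteness of the Picard groupoid and the vanishing $\op H^1(\op{Gr}_{G,x}^{\le\lambda},\cal O)=0$). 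As written, your argument for this case does not close.

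The second gap is the gluing step. You propose to assemble the torus and simply connected pieces by ``an explicit cocycle matching inside $\op{SL}_2$,'' mimicking how Brylinski--Deligne glue on the $\mathbf K_2$ side; but no such presentation of $\mathbf{Pic}^{\op{fact}}(\op{Gr}_G)$ by root subgroups is available a priori. The paper instead (i) reduces to the case $G_{\op{der}}$ simply connected via a $z$-extension $1\to T_2\to\widetilde G\to G\to 1$ and $h$-descent of line bundles along the simplicial resolution $\op{Gr}_{\widetilde G\times T_2^{\bullet}}$, and (ii) in that case exhibits $\op{Gr}_G$ as an \'etale-locally trivial $\op{Gr}_{G_{\op{der}}}$-bundle over $\op{Gr}_{T_1}$ with $T_1=G/G_{\op{der}}$, proving that a factorization line bundle with vanishing $q_{\op{der}}$ is fiberwise trivial (via a relative Picard sheaf computation and a propagation argument along diagonals) and hence descends to $\op{Gr}_{T_1}$. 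Neither mechanism appears in your sketch, so the passage from the two special cases to general reductive $G$ is not established. Two smaller but real issues: your torus step identifies $\op{Gr}_T$ with the space of $\Lambda_T$-colored divisors, and equating their factorization Picard groupoids requires the Pic-contractibility of the Ran space and the comparison through the rational and lax Grassmannians, not just the Beilinson--Drinfeld theta-data classification; and the verification that $\Phi_G$ intertwines the two classifications is itself a substantial computation (the $\omega_X^{q(\lambda)}$ shift between the two theta-data normalizations is detected by a deformation to the normal cone), which your ``verify piece by piece'' does not yet engage.
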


\noindent
This result affirms \cite[Conjecture 3.4.2]{GL16}. Roughly speaking, it means that no information is lost when we pass from $\tn K$-theoretic metaplectic data to geometry of the affine Grassmannian.

\subsubsection{} Let us note some consequences of Theorem \ref{thmx-main}. First, the classification theorem of Brylinski--Deligne \cite{BD01} applies to any regular scheme of finite type over a field. In particular, $\mathbf{CExt}(G, \mathbf K_2)$ is equivalent to a groupoid of combinatorial gadgets, to be denoted by $\theta_G(\Lambda_T)$.

\smallskip

We shall establish a commutative triangle (appearing as \eqref{eq-compatible-main} in the main text):
\begin{equation}
\label{eq-main-triangle}
\xymatrix@C=0em@R=1.5em{
	\mathbf{CExt}(G, \mathbf K_2) \ar[rr]^-{\Phi_G}\ar[rd]_{\Phi_{\op{BD}}} & & \mathbf{Pic}^{\op{fact}}(\op{Gr}_G) \ar[dl]^{\Psi} \\
	& \theta_G(\Lambda_T)
}
\end{equation}
where the functor $\Psi$ is defined in explicit terms (i.e., without recourse to algebraic $\tn K$-theory). Therefore, Theorem \ref{thmx-main} implies a combinatorial classification of factorization line bundles on $\tn{Gr}_G$. The notation $\theta_G(\Lambda_T)$ is meant to recall the groupoid of ``$\theta$-data'' considered by Beilinson--Drinfeld \cite{BD04}, whose classification of factorization line bundles on the space of colored divisors is a precursor to our theorem.

\subsubsection{}
Another application of our theorem is the following.

\begin{corx}
\label{corx-bun}
Suppose we are under the hypothesis of Theorem \ref{thmx-main} and $X$ is furthermore proper. Then every factorization line bundle on $\op{Gr}_G$ canonically descends to $\op{Bun}_G$.
\end{corx}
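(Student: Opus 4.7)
The plan is to deduce this from Theorem \ref{thmx-main} combined with a Brylinski--Deligne-style construction that attaches a line bundle on $\op{Bun}_G$ to each $\mathbf K_2$-central extension on the proper curve $X$. Concretely, I would produce a functor $E \mapsto \mathcal L_E$ from $\mathbf{CExt}(G,\mathbf K_2)$ to $\mathbf{Pic}(\op{Bun}_G)$, together with canonical isomorphisms $p^{*}\mathcal L_E \simeq \Phi_G(E)$ of factorization line bundles, where $p:\op{Gr}_G \to \op{Bun}_G$ is the canonical projection. Since Theorem \ref{thmx-main} says that every factorization line bundle is $\Phi_G(E)$ for an essentially unique $E$, this would immediately yield the asserted canonical descent.

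The construction of $\mathcal L_E$ proceeds as follows. Given an $S$-point of $\op{Bun}_G$, i.e.\ a principal $G$-bundle $\mathcal P$ on $X \times S$, form the twist $E_{\mathcal P} := \mathcal P \times^{G} E$, which is a $\mathbf K_2$-torsor on $X\times S$. Because the projection $X\times S \to S$ is smooth and proper of relative dimension one, a degree-shifted pushforward in algebraic $\tn K$-theory converts $\mathbf K_2$-torsors on the total space into $\mathbf K_1 = \mathbb G_m$-torsors (i.e.\ line bundles) on $S$. Applying this pushforward to $E_{\mathcal P}$ defines $\mathcal L_E|_{\mathcal P}$, functorially in $\mathcal P$. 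This is the stacky analogue of the adelic ``integration'' used in \cite{BD01} to produce a topological central extension of $G(\mathbb A_{\mathbf F})$ from $E$.

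The main work lies in establishing the compatibility $p^{*}\mathcal L_E \simeq \Phi_G(E)$ as factorization line bundles. This amounts to matching two constructions with the common input $E$: on the $\mathcal L_E$ side, a global $\tn K$-theoretic pushforward along the proper curve $X\to \op{Spec}\,k$; on the $\Phi_G(E)$ side, the ``residue'' recipe of \cite{Ga18}, which extracts the line bundle from the behavior of $E$ near a varying finite set of points of $X$. I expect this to be the main technical obstacle, since it requires unwinding the definition of $\Phi_G$ and identifying it with a global pushforward via what is essentially the residue theorem on a smooth proper curve. Once the pointwise identification is in place, the factorization compatibility should propagate automatically, as the $\tn K$-theoretic pushforward behaves symmetrically with respect to disjoint decompositions of the support of a divisor of points on $X$.
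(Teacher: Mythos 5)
Your overall skeleton is the same as the paper's: invoke Theorem \ref{thmx-main} to write every factorization line bundle as $\Phi_G(E)$ for an essentially unique $E\in\mathbf{CExt}(G,\mathbf K_2)$, and then show that $\Phi_G(E)$ descends to $\op{Bun}_G$. The difference is that the paper's entire proof of the second step is a citation: by \cite[\S2.4]{Ga18}, the composite $\mathbf{CExt}(G,\mathbf K_2)\xrightarrow{\Phi_G}\mathbf{Pic}^{\op{fact}}(\op{Gr}_G)\rightarrow\mathbf{Pic}(\op{Gr}_G)$ already factors through $\mathbf{Pic}(\op{Bun}_G)$ when $X$ is proper. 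There is nothing to ``match'': Gaitsgory's recipe attaches to an $S$-point $(\{x^I\},\cal P_G,\alpha)$ of $\op{Gr}_G$ the $\mathbf K_2$-gerbe of liftings of $\cal P_G$ to an $E$-torsor on $S\times X$ and takes its residue along $S\times X\to S$; for proper $X$ the same gerbe, without the neutralization $\gamma$ coming from $\alpha$, can be pushed forward along the proper relative curve, and this is exactly the line bundle on $\op{Bun}_G$ whose pullback is $\Phi_G(E)$. So the ``main technical obstacle'' you flag --- reconciling a global $\tn K$-theoretic pushforward with the residue construction --- is not an obstacle to be overcome in this paper; it is the content of the cited section of \cite{Ga18}, and your proposal as written leaves it unproved.

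Two further points. First, $\cal P\times^G E$ is not a $\mathbf K_2$-torsor on $X\times S$: with $G$ acting by conjugation it is an inner form of the group $E$, and with no natural action it is not defined at all. The correct object is the $\mathbf K_2$-\emph{gerbe} of liftings of $\cal P$ to an $E$-torsor (cf.\ \S\ref{sec-smooth-test} of the paper), and it is the pushforward of this degree-two object along the relative curve that lands in degree one, i.e.\ in $\mathbf{Pic}(S)$; a $\mathbf K_2$-torsor would push forward to a $\mathbf K_1$-valued \emph{function}, not a line bundle. Second, note that the statement only asserts descent of the underlying line bundle to $\op{Bun}_G$ (which carries no factorization structure), so your closing remark about propagating factorization compatibility is not needed for the corollary.
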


\noindent
Indeed, this follows from the fact that the composition:
$$
\mathbf{CExt}(G, \mathbf K_2) \xrightarrow{\Phi_G} \mathbf{Pic}^{\op{fact}}(\op{Gr}_G) \rightarrow\mathbf{Pic}(\op{Gr}_G)
$$
factors through $\mathbf{Pic}(\op{Bun}_G)$ (see \cite[\S2.4]{Ga18}). Our Corollary may be viewed as an analogue of Gaitsgory's theorem \cite{Ga13} on cohomological contractibility of the fibers of $\tn{Gr}_G \rightarrow \tn{Bun}_G$.

\subsection{Our strategy}

\subsubsection{} We should mention first that our proof of Theorem \ref{thmx-main} relies on the classification theorem of Brylinski--Deligne, a fact which has two practical implications:
\begin{enumerate}[(a)]
	\item One does \emph{not} need to know the precise definition of $\Phi_G$ in order to understand our proof; in fact, as long as $\Phi_G$ gives the correct value on regular test schemes $S\rightarrow\op{Gr}_G$ (where it is defined using Gersten's resolution of $\mathbf K_2$) and satisfies some reasonable properties, then our proof runs through.
	\item After all functors in the triangle \eqref{eq-main-triangle} are defined, checking that it commutes is an essential step towards the proof, and takes up a large part of our work.
\end{enumerate}

A proof of Theorem \ref{thmx-main} without using the Brylinski--Deligne classification would certainly be desirable, but the authors could not find one.\footnote{As of now, even the definition of $\Phi_G$ appeals to the Brylinski--Deligne classification (\cite[\S5.1]{Ga18}).}

\subsubsection{} Assuming the commutativity of \eqref{eq-main-triangle} (which will be proved in \S\ref{sec-compatible}), our proof of the main theorem proceeds by checking that $\Psi$ is an equivalence for various kinds of reductive groups $G$. We summarize the key ideas and make attributions below (although the main text is organized somewhat differently):

\smallskip

\textbf{Step 1:} $G=T$ is a (split) torus. This case amounts to showing that $\mathbf{Pic}^{\op{fact}}(\op{Gr}_T)$ is equivalent to $\theta$-data for the lattice $\Lambda_T$. This is the content of \S\ref{sec-torus}. In fact, we will show that the same is true for factorization line bundles on various versions of $\op{Gr}_T$. This part of the proof relies on A.~Beilinson and V.~Drinfeld's classification of factorization line bundles on $\Lambda_T$-colored divisors of $X$ (see \cite{BD04}) and the Pic-contractibility of $\Ran(X)$ (\cite{Ta19}).

\smallskip

\textbf{Step 2:} $G$ is semisimple and simply connected. This case is essentially reduced to classifying line bundles on $\op{Gr}_G$ at a point of the curve $X$, and the latter has been worked out by G.~Faltings \cite{Fa03}. Since this case is also needed in proving the commutativity of \eqref{eq-main-triangle}, it will appear along with it in \S\ref{sec-compatible}.

\smallskip

\textbf{Step 3:} The derived subgroup $G_{\op{der}}$ is simply connected. This case essentially follows from the two previous ones. More precisely, let $T_1$ be the torus $G/G_{\op{der}}$. We observe that $\op{Gr}_G$ is an \'etale-locally trivial fiber bundle over $\op{Gr}_{T_1}$, with typical fiber $\op{Gr}_{G_{\op{der}}}$. We then use our knowledge from Step 2 to study when a factorization line bundle on $\op{Gr}_G$ descends to $\op{Gr}_{T_1}$, and we use Step 1 to classify the ones that are pulled back from the base.

\smallskip

\textbf{Step 4:} An arbitrary reductive group $G$. This follows from the previous cases, by $h$-descent of line bundles on derived schemes.\footnote{Aside from this descent technique, which was suggested to us by D.~Gaitsgory, our paper lives entirely within classical (i.e.,~non-derived) algebraic geometry.} Steps 3 and 4 form the content of \S\ref{sec-main}.

\subsection{Notations and conventions}
\label{sec-notations}

\subsubsection{} Unlike the main references \cite{GL16} \cite{Ga18}, we do \emph{not} need the theory of $\infty$-categories. Hence terms such as categories, groupoids, prestacks, etc., are understood in the classical sense.

\smallskip

Moreover, the prestacks we consider in the present paper are $0$-truncated. Namely, they are synonymous to presheaves on the category of affine schemes. However, in order to stay consistent with existing literature, we shall continue to call them prestacks.

\subsubsection{} Throughout the paper, we let $k$ be an \emph{algebraically closed} field. The general case of a perfect field is handled using Galois descent. The fact that central extensions by $\mathbf K_2$ satisfy Galois descent follows from work of Colliot-Th\'el\`ene and Suslin. We refer the reader to \cite[\S2]{BD01} for a detailed discussion.

\subsubsection{} We let $X$ be a connected, smooth algebraic curve over $k$.

\subsubsection{} Let $\Ran(X)$ denote the Ran space associated to $X$, regarded as a prestack (in fact, a presheaf). For an affine test scheme $S$ over $k$, an element of $\op{Maps}(S,\Ran(X))$ is by definition a finite subset $x^I = \{x^{(1)},\cdots,x^{(|I|)}\}$ of $\op{Maps}(S, X)$.

\smallskip

The prestack $\Ran(X)$ has an explicit presentation as a colimit of schemes. Let $\mathbf{fSet}^{\tn{surj}}$ denote the category of finite nonempty sets with surjections as morphisms. Then we have an equivalence:
$$
\Ran(X) \xrightarrow{\sim} \underset{I\in\mathbf{fSet}^{\op{surj}}}{\op{colim}}\, X^I,
$$
where for each $I_1 \twoheadrightarrow I_2$, the corresponding map $X^{I_2} \rightarrow X^{I_1}$ is the diagonal embedding. We refer the reader to \cite[\S1]{Ga13} for basic properties of the Ran space.

\subsubsection{}
For a finite nonempty set $I$, we let $\Ran(X)^{\times I}_{\tn{disj}}$ denote the open locus in $\Ran(X)^{\times I}$ where the sets of points associated to distinct elements $i_1\neq i_2\in I$ are pairwise disjoint.

\smallskip

A prestack $\cal Y$ over $\Ran(X)$ is a \emph{factorization prestack} if its pullback $\sqcup^*\cal Y$ along the map of taking disjoint union:
$$
\sqcup : \Ran(X)^{\times I}_{\op{disj}}\rightarrow\Ran(X)
$$
comes equipped with an identification with the restriction $\cal Y^{\times I}\big|_{\Ran(X)^{\times I}_{\op{disj}}}$ for each $I$. This identification is required to satisfy the obvious compatibility condition for compositions along surjections of finite nonempty sets $I_1\twoheadrightarrow I_2$.

\subsubsection{}
Let $\cal Y$ be a factorization prestack over $\Ran(X)$. A \emph{factorization line bundle} on $\cal Y$ is a line bundle $\cal L$ together with an isomorphism
\begin{equation}
\label{eq-pic-fact-isom}
\sqcup^*\cal L\xrightarrow{\sim} \cal L^{\boxtimes I}\big|_{\cal Y^{\times I}|_{\Ran(X)^{\times I}_{\op{disj}}}}
\end{equation}
over the factorization isomorphism $\sqcup^*\cal Y\xrightarrow{\sim} \cal Y^{\times I}\big|_{\Ran(X)^{\times I}_{\op{disj}}}$, satisfying compatibility for compositions. In fact, it suffices to specify isomorphisms \eqref{eq-pic-fact-isom} for $|I| = 2$, and check the compatilibity conditions for $|I| \le 3$.

\subsubsection{} Let $G$ be a connected, reductive group over $k$. We write $G_{\op{der}}$ for the derived subgroup of $G$, and $\widetilde G_{\op{der}}$ for its universal cover.

\smallskip

When we have fixed a maximal torus $T\subset G$, the notations $T_{\tn{der}}$ and $\widetilde T_{\tn{der}}$ will be used to denote the induced maximal tori in $G_{\tn{der}}$ and $\widetilde G_{\tn{der}}$.

\subsubsection{} We write $\op{Gr}_G$ for the Beilinson-Drinfeld affine Grassmannian associated to $G$. For a test affine scheme $S$, the set $\op{Maps}(S,\op{Gr}_G)$ consists of triples $(\{x^I\}, \cal P_G, \alpha)$, where:
\begin{enumerate}[(a)]
	\item $x^I$ is a finite subset of $\op{Maps}(S,X)$;
	\item $\cal P_G$ is a(n \'etale-locally trivial) $G$-bundle over $S\times X$;
	\item $\alpha$ is a trivialization of $\cal P_G$ over $S\times X-\bigcup_{i\in I}\Gamma_{x^{(i)}}$, where $\Gamma_{x^{(i)}}$ denotes the graph of $x^{(i)}$.
\end{enumerate}
The morphism $\op{Gr}_G\rightarrow\Ran(X)$ is ind-schematic and of ind-finite type, and realizes $\op{Gr}_G$ as a factorization prestack over $\Ran(X)$. The base change of $\op{Gr}_{G}$ along $X^I\rightarrow\Ran(X)$ will be denoted by $\op{Gr}_{G,X^I}$. We refer the reader to \cite{Zh16} for properties of $\tn{Gr}_G$.

\subsubsection{} We let $\cal L^+G$ (resp.~$\cal LG$) denote the arc (resp.~loop) group, regarded as factorization group prestacks over $\Ran(X)$. For a test affine scheme $S$, a lift of $x^I : S\rightarrow \Ran(X)$ to $\cal L^+G$ (resp.~$\cal LG$) is given by a map from the formal completion $D_{x^I}$ (resp.~punctured formal completion $\mathring D_{x^I} := D_{x^I}\backslash\bigcup_{i\in I}\Gamma_{x^{(i)}}$) of $\bigcup_{i\in I}\Gamma_{x^{(i)}}$ inside $S\times X$ to $G$.

\smallskip

Furthermore, the projection $\cal L^+G\rightarrow\Ran(X)$ is schematic (but not of finite type) and $\cal LG\rightarrow\Ran(X)$ is ind-schematic. The affine Grassmannian $\op{Gr}_G$ can be expressed as the quotient $\cal LG/\cal L^+G$ of \'etale sheaves.

\subsubsection{} For a closed point $x\in X$, we denote by $\cal O_x$ the \emph{completed} local ring at $x$ and $\cal K_x$ its localization at a uniformizer. The fibers of the above prestacks at a closed point $x\in X$ will be denoted by $\op{Gr}_{G,x}$, $\cal L_xG$, and $\cal L_x^+G$. Thus $\cal L_xG(k) \cong G(\cal K_x)$ and $\cal L_x^+G(k) \cong G(\cal O_x)$.

\subsection*{Acknowledgements} We thank D.~Gaitsgory for suggesting this problem to us, and for many insights that played a substantial role in its solution. We also benefited from discussions with Justin Campbell, Elden Elmanto, Quoc P.~Ho, and Xinwen Zhu.

\medskip

\section{Factorization line bundles on $\op{Gr}_T$}
\label{sec-torus}

In this section, we prove that factorization line bundles on various versions of $\op{Gr}_T$ (e.g.,~combinatorial, rational) are all classified by $\theta$-data.

\subsection{The many faces of $\op{Gr}_T$}

\subsubsection{}
Suppose $T$ is a torus over $k$. Let $\Lambda_T$ denote its co-character lattice. We will first introduce a few variants of the affine Grassmannian $\tn{Gr}_T$. They are summarized in the following commutative diagram:
\begin{equation}
\label{eq-many-Gr}
\xymatrix@C=1.5em@R=1.5em{
	\op{Gr}_{T,\op{comb}} \ar[r] & \op{Gr}_T \ar[r]\ar[d] & \op{Div}(X)\underset{\mathbb Z}{\otimes}\Lambda_T \ar[d] \\
	& \op{Gr}_{T,\op{lax}} \ar[r] & \op{Gr}_{T,\op{rat}}
}
\end{equation}

\subsubsection{The combinatorial variant} Consider an index category whose objects are pairs $(I,\lambda^{(I)})$, where $I$ is a finite set, and $\lambda^{(I)}$ is an $I$-family of elements in $\Lambda_T$ (its element corresponding to $i\in I$ is denoted by $\lambda^{(i)}$). A morphism $(I,\lambda^{(I)})\rightarrow (J,\lambda^{(J)})$ in this category consists of a \emph{surjective} map $\varphi: I\twoheadrightarrow J$ such that $\lambda^{(j)}=\sum_{i\in\varphi^{-1}(j)}\lambda^{(i)}$ for all $j\in J$. We set:
$$
\op{Gr}_{T,\op{comb}} := \underset{(I,\lambda^{(I)})}{\op{colim}}\, X^I.
$$

$\op{Gr}_{T,\op{comb}}$ is a factorization prestack over $\Ran(X)$. Furthermore, we have a canonical map $\op{Gr}_{T,\op{comb}}\rightarrow\op{Gr}_T$ sending an $S$-point $x^I : S\rightarrow X^I$ corresponding to $(I,\lambda^{(I)})$ to the triple $(\{x^{(i)}\}, \bigotimes_{i\in I}\cal O(\lambda^{(i)}\Gamma_{x^i}),\alpha)$ where $\alpha$ is the tautological trivialization.

\subsubsection{The lax variant} We let $\op{Gr}_{T,\op{lax}}$ denote the lax prestack\footnote{See \cite[\S2]{Ga15} for an introduction to lax prestacks.} whose value at $S$ is the \emph{category} whose objects are triples $(x^I, \cal P_T, \alpha)$ as in $\op{Gr}_T(S)$, but there is a morphism:
$$
(x^I, \cal P_T, \alpha) \rightarrow (x^J, \cal P'_T, \alpha'),
$$
whenever $x^I\subset x^J$, $\cal P_T\xrightarrow{\sim}\cal P'_T$, and the trivialization $\alpha$ restricts to $\alpha'$ over the complement of $\bigcup_{j\in J}\Gamma_{x^{(j)}}$. Such a morphism is non-invertible when $x^I\subset x^J$ is a proper inclusion.

$\op{Gr}_{T,\op{lax}}$ is a factorization lax prestack over the lax version of the Ran space $\op{Ran}(X)_{\op{lax}}$. Furthermore, we have a canonical map $\op{Gr}_T\rightarrow\op{Gr}_{T,\op{lax}}$ sending $(x^I, \cal P_T, \alpha)$ to the very same object.

\subsubsection{The rational variant} We define $\op{Gr}_{T,\op{rat}}$ as a prestack whose value at $S$ is the groupoid of $T$-bundles $\cal P_T$ over $S\times X$ equipped with a \emph{rational trivialization}, i.e., for some open $U\subset S\times X$ which is schematically dense after arbitrary base change $S'\rightarrow S$, the $T$-bundle $\cal P_T$ admits a trivialization over $U$; we regard two rational trivializations as equivalent if they agree on the overlaps.

Even though $\op{Gr}_{T,\op{rat}}$ does not live over any version of the Ran space, one can still make sense of factorization line bundles (or any other gadget) over $\op{Gr}_{T,\op{rat}}$. Namely, it is a line bundle $\cal L$ over $\op{Gr}_{T,\op{rat}}$ together with isomorphisms:
$$
c_{\cal P_T^{(1)},\cal P_T^{(2)}} : \cal L\big|_{\cal P_T} \xrightarrow{\sim} \cal L\big|_{\cal P_T^{(1)}} \otimes \cal L\big|_{\cal P_T^{(2)}},
$$
whenever $\cal P_T^{(1)}$ (resp.~$\cal P_T^{(2)}$) admits a trivialization over $U^{(1)}$ (resp.~$U^{(2)}$) such that the complements of $U^{(1)}$ and $U^{(2)}$ are disjoint, and $\cal P_T$ is the gluing of $\cal P_T^{(1)}\big|_{U^{(2)}}$ and $\cal P_T^{(2)}\big|_{U^{(1)}}$ along $U^{(1)}\cap U^{(2)}$, where they are both trivialized. The isomorphisms $c_{\cal P_T^{(1)},\cal P_T^{(2)}}$ are required to satisfy the obvious compatibility conditions in the presence of three $T$-bundles.

\begin{rem}
The objects $\op{Gr}_{T,\op{lax}}$ and $\op{Gr}_{T,\op{rat}}$ have analogues for a general group $G$, but we will not use them in this paper.
\end{rem}

\subsubsection{Colored divisors}
Recall the prestack $\op{Div}(X)$ whose value at $S$ is the abelian group of Cartier divisors of $S\times X$ relative to $S$. We take $\op{Div}(X)\underset{\mathbb Z}{\otimes}\Lambda_T$ as its extension of scalars to $\Lambda_T$. There is a morphism $\op{Div}(X)\rightarrow\op{Gr}_{\mathbb G_m,\op{rat}}$ defined by associating to a Cartier divisor $D$ the line bundle $\cal O_{S\times X}(D)$. It extends to a morphism $\op{Div}(X)\underset{\mathbb Z}{\otimes}\Lambda_T\rightarrow\op{Gr}_{T,\op{rat}}$.

As in the previous case, we make sense of factorization line bundles over $\op{Div}(X)\underset{\mathbb Z}{\otimes}\Lambda_T$ as follows. It is a line bundle $\cal L$ together with isomorphisms:
$$
c_{D_1,D_2} : \cal L\big|_{D_1+D_2} \xrightarrow{\sim} \cal L\big|_{D_1}\otimes\cal L\big|_{D_2},
$$
whenever the support of $D_1$ and $D_2$ are disjoint. The isomorphisms $c_{D_1,D_2}$ are required to satisfy the obvious compatibility conditions for three divisors.

\subsection{Classification statements}

\subsubsection{$\theta$-data}
\label{sec-theta}
We recall the notion of $\theta$-data for a lattice $\Lambda$ due to Beilinson-Drinfeld \cite[\S3.10.3]{BD04}. The Picard groupoid $\theta(\Lambda)$ consists of triples $(q, \cal L^{(\lambda)}, c_{\lambda,\mu})$ where:
\begin{enumerate}[(a)]
	\item $q\in Q(\Lambda,\mathbb Z)$ is an integral valued quadratic form on $\Lambda$; we use $\kappa$ to denote its symmetric bilinear form, defined by the formula: $\kappa(\lambda,\mu):= q(\lambda+\mu) - q(\lambda) - q(\mu)$; 
	\item $\cal L^{(\lambda)}$ is a system of line bundles on $X$ parametrized by $\lambda\in \Lambda$, and
	\item $c_{\lambda,\mu}$ are isomorphisms:
	\begin{equation}
	\label{eq-theta-data-isom}
	c_{\lambda,\mu} : \cal L^{(\lambda)} \otimes \cal L^{(\mu)} \xrightarrow{\sim} \cal L^{(\lambda+\mu)}\otimes\omega_X^{\kappa(\lambda,\mu)},
	\end{equation}
	which are associative, and satisfy a \emph{$\kappa$-twisted} commutativity condition, i.e.
	\begin{equation}
	\label{eq-pic-twisted-commutative}
	c_{\lambda,\mu}(a\otimes b)=(-1)^{\kappa(\lambda,\mu)}\cdot c_{\mu,\lambda}(b\otimes a).
	\end{equation}
\end{enumerate}

\begin{rem}
The authors of \cite{BD04} work in the setting of $\mathbb Z/2\mathbb Z$-graded line bundles, so what we call $\theta$-data corresponds to what they call \emph{even} $\theta$-data.
\end{rem}

\subsubsection{Shifted $\theta$-data}
\label{sec-shifted-theta}
For later purposes, we also introduce a Picard groupoid $\theta^+(\Lambda)$ consisting of pairs $(q,\cal L^{(\lambda)}, c^+_{\lambda,\mu})$, where we replace \eqref{eq-theta-data-isom} by isomorphisms $c_{\lambda,\mu}^+ : \cal L^{(\lambda)}\otimes\cal L^{(\mu)}\xrightarrow{\sim} \cal L^{(\lambda+\mu)}$ and also demand that they are associative and satisfy the $\kappa$-twisted commutativity condition. Clearly, we have an equivalence:
$$
\theta(\Lambda) \xrightarrow{\sim} \theta^+(\Lambda),\quad (q,\cal L^{(\lambda)})\leadsto (q, \cal L^{(\lambda)}\otimes\omega_X^{q(\lambda)}).
$$

\begin{lem}
\label{lem-classify-Gr-comb}
There is a canonical equivalence of Picard groupoids $\mathbf{Pic}^{\op{fact}}(\op{Gr}_{T,\op{comb}}) \xrightarrow{\sim} \theta(\Lambda_T)$.
\end{lem}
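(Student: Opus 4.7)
The plan is to construct a functor $F: \mathbf{Pic}^{\op{fact}}(\op{Gr}_{T,\op{comb}}) \to \theta(\Lambda_T)$ by extracting all pieces of $\theta$-data from the restrictions of a factorization line bundle to the copies of $X^I$ in the colimit presentation of $\op{Gr}_{T,\op{comb}}$, and then to exhibit a manifest inverse $G$. Given a factorization line bundle $\cal L$, take $\cal L^{(\lambda)}$ to be the restriction of $\cal L$ to the copy of $X^{\{*\}} = X$ corresponding to the object $(\{*\},\lambda)$. Let $\cal M_{\lambda,\mu}$ denote the restriction of $\cal L$ to the copy of $X^2$ corresponding to $(\{1,2\},(\lambda,\mu))$; the factorization isomorphism on $X^2 \setminus \Delta$ identifies $\cal M_{\lambda,\mu}$ with $\cal L^{(\lambda)} \boxtimes \cal L^{(\mu)}$, so that $\cal M_{\lambda,\mu} \otimes (\cal L^{(\lambda)} \boxtimes \cal L^{(\mu)})^{-1}$ is a line bundle on $X^2$ trivial away from the diagonal, and hence equal to $\cal O(\kappa(\lambda,\mu)\Delta)$ for a unique integer $\kappa(\lambda,\mu)$. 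The index-category morphism $(\{1,2\},(\lambda,\mu)) \to (\{*\},\lambda+\mu)$ identifies $\cal M_{\lambda,\mu}|_\Delta$ with $\cal L^{(\lambda+\mu)}$; combined with $\cal O(\Delta)|_\Delta \cong \omega_X^{-1}$, this yields the desired isomorphism $c_{\lambda,\mu}: \cal L^{(\lambda)} \otimes \cal L^{(\mu)} \xrightarrow{\sim} \cal L^{(\lambda+\mu)} \otimes \omega_X^{\kappa(\lambda,\mu)}$.

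\medskip

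Verifying that $(q,\cal L^{(\lambda)},c_{\lambda,\mu})$ is legitimate $\theta$-data proceeds in several steps. Symmetry $\kappa(\lambda,\mu) = \kappa(\mu,\lambda)$ follows from the nontrivial isomorphism between $(\{1,2\},(\lambda,\mu))$ and $(\{1,2\},(\mu,\lambda))$ provided by the index swap. Bilinearity of $\kappa$ is obtained by considering the three-point object $(\{1,2,3\},(\lambda,\mu,\nu))$: restricting the associated line bundle on $X^3$ to $\Delta_{12} \cong X^2$ and identifying the result with $\cal M_{\lambda+\mu,\nu}$ forces $\kappa(\lambda+\mu,\nu) = \kappa(\lambda,\nu) + \kappa(\mu,\nu)$. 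The $\kappa$-twisted commutativity sign $(-1)^{\kappa(\lambda,\mu)}$ on $c_{\lambda,\mu}$ reflects the action $\sigma^*(x-y)^{-\kappa} = (-1)^{\kappa}(x-y)^{-\kappa}$ of the swap on local sections of $\cal O(\kappa\Delta)$; specializing $\lambda = \mu$, the resulting $\Sigma_2$-equivariance of $\cal M_{\lambda,\lambda}$, together with the trivial $\Sigma_2$-action on $\cal L^{(2\lambda)}$ dictated by the colimit morphism to $(\{*\},2\lambda)$, forces $\kappa(\lambda,\lambda) \in 2\mathbb Z$, so $q(\lambda) := \kappa(\lambda,\lambda)/2$ is a well-defined integer-valued quadratic form with associated bilinear form $\kappa$. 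Associativity of $c$ is likewise extracted from the three-point analysis.

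\medskip

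For the inverse functor, one assigns to $(q,\cal L^{(\lambda)},c_{\lambda,\mu}) \in \theta(\Lambda_T)$ the line bundle whose restriction to $X^I$ for the object $(I,\lambda^{(I)})$ is
$$
\boxtimes_{i \in I} \cal L^{(\lambda^{(i)})} \otimes \cal O\Bigl(\sum_{i<j} \kappa(\lambda^{(i)},\lambda^{(j)}) \Delta_{ij}\Bigr),
$$
equipped with the tautological factorization isomorphism on disjoint loci and with structural isomorphisms for surjections $(I,\lambda^{(I)}) \to (J,\lambda^{(J)})$ built by iterating $c_{\lambda,\mu}$ along the collapsed diagonals. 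The main obstacle I expect is verifying that these structural isomorphisms are independent of the order of iteration and respect the $\Sigma_I$-equivariance: this is precisely where the $\kappa$-twisted commutativity of $c$ cancels the Koszul sign by which $\Sigma_I$ acts on the $\cal O(\kappa\Delta)$ factors, and where the associativity of $c$ handles compositions of surjections. Once this compatibility is established, $F$ and $G$ are visibly mutually inverse, proving the lemma.
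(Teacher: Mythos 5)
Your proposal is correct and follows essentially the same route as the paper: extract $\cal L^{(\lambda)}$ and $\cal L^{(\lambda,\mu)}$ from the one- and two-point strata, read off $\kappa(\lambda,\mu)$ from the order of the extension across the diagonal, get $c_{\lambda,\mu}$ by restricting to $\Delta$ via $\cal O(\Delta)\big|_{\Delta}\cong\omega_X^{-1}$, deduce symmetry/bilinearity/the sign from $\Sigma_2$-equivariance and the three-point stratum, and conclude $\kappa(\lambda,\lambda)\in 2\mathbb Z$. Your explicit formula for the inverse functor (and the observation that the $\kappa$-twisted commutativity cancels the sign by which $\Sigma_I$ acts on the $\cal O(\kappa\Delta_{ij})$ factors) fills in the step the paper dismisses as ``straightforward,'' and is a welcome addition.
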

\begin{proof}
Given a factorization line bundle over $\op{Gr}_{T,\op{comb}}$, we denote its pullback along the inclusion $X \rightarrow \op{Gr}_{T,\op{comb}}$ corresponding to $(\{1\}, \lambda)$ by $\cal L^{(\lambda)}$, and its pullback along $X^2\rightarrow\op{Gr}_{T,\op{comb}}$ corresponding to $(\{1,2\},(\lambda,\mu))$ by $\cal L^{(\lambda,\mu)}$. The factorization isomorphism shows that there is an isomorphism $\cal L^{(\lambda)}\boxtimes\cal L^{(\mu)}\big|_{x^2-\Delta}\xrightarrow{\sim}\cal L^{(\lambda,\mu)}$. It extends to an isomorphism
\begin{equation}
\label{eq-fact-isom-ext}
\cal L^{(\lambda)}\boxtimes\cal L^{(\mu)}\xrightarrow{\sim} \cal L^{(\lambda,\mu)}\otimes\cal O_{X^2}(-\kappa(\lambda,\mu)\Delta),
\end{equation}
for some uniquely determined integer $\kappa(\lambda,\mu)$; its dependency on $\lambda,\mu$ is bilinear, by considering $\cal L^{(\lambda,\mu,\nu)}$ for a triple $(\{1,2,3\},(\lambda,\mu,\nu))$, using the compatibility between factorization isomorphism and composition. Since $\cal L^{(\lambda,\mu)}$ restricts to $\cal L^{(\lambda+\mu)}$ along $\Delta\hookrightarrow X^2$, the isomorphism \eqref{eq-fact-isom-ext} restricts to a system of isomorphisms $c_{\lambda,\mu}$ as in \eqref{eq-theta-data-isom}.

Next, because the factorization isomorphisms are $\Sigma_2$-invariant, so are the isomorphisms \eqref{eq-fact-isom-ext}. In other words, we have a commutative diagram:
\begin{equation}
\label{eq-fact-compatible-with-swap}
\xymatrix{
	\cal L^{(\lambda)} \boxtimes \cal L^{(\mu)} \ar[r]^-{\sim}\ar[d]^{\cong} & \cal L^{(\lambda,\mu)} \otimes \cal O_{X^2}(-\kappa(\lambda,\mu)\Delta) \ar[d]^{\cong} \\
	\sigma^*(\cal L^{(\mu)} \boxtimes \cal L^{(\lambda)}) \ar[r]^-{\sim} & \sigma^*\cal L^{(\mu,\lambda)} \otimes \sigma^*\cal O_{X^2}(-\kappa(\mu,\lambda)\Delta),
}
\end{equation}
where $\sigma$ is the isomorphism $X^{(\lambda,\mu)}\xrightarrow{\sim} X^{(\mu,\lambda)}$. One deduces from this fact that $\kappa$ is also symmetric. Restricting \eqref{eq-fact-compatible-with-swap} to the diagonal, we obtain a commutative diagram:
$$
\xymatrix{
	\cal L^{(\lambda)} \otimes \cal L^{(\mu)} \ar[r]^-{c_{\lambda,\mu}}\ar[d]^{\cong} & \cal L^{(\lambda + \mu)} \otimes \omega_X^{\kappa(\lambda,\mu)} \ar[d]^{(-1)^{\kappa(\lambda,\mu)}} \\
	\cal L^{(\mu)} \otimes \cal L^{(\lambda)} \ar[r]^-{c_{\mu,\lambda}} & \cal L^{(\mu + \lambda)} \otimes \omega_X^{\kappa(\mu,\lambda)}
}
$$
where the multiplication by $(-1)^{\kappa(\lambda,\mu)}$ appears because the isomorphism $\cal O_{X^2}(-\Delta)\big|_{\Delta}\xrightarrow{\sim}\omega_X$ is only $\Sigma_2$-invariant \emph{up to a sign}. This commutative diagram expresses the identity \eqref{eq-pic-twisted-commutative}. Finally taking $\lambda = \mu$, we see that $(-1)^{\kappa(\lambda,\lambda)} = 1$, so $\kappa(\lambda, \lambda) = 2q(\lambda)$ for a uniquely determined integer $q(\lambda)$. Thus we have define an integral quadratic form $q$ on $\Lambda_T$.

The above procedure defines the functor $\mathbf{Pic}^{\op{fact}}(\op{Gr}_{T,\op{comb}})\rightarrow\Theta(\Lambda_T; \mathbf{Pic})$. Checking that it is an equivalence is straightforward.
\end{proof}

\subsubsection{} We can now state the main result of this section. By pulling back along the morphisms of \eqref{eq-many-Gr}, we obtain a diagram of Picard groupoids, where the leftmost equivalence comes from Lemma \ref{lem-classify-Gr-comb}:
\begin{equation}
\label{eq-many-Gr-pic}
\xymatrix@C=1.5em@R=1.5em{
	\theta(\Lambda_T) & \mathbf{Pic}^{\op{fact}}(\op{Gr}_{T,\op{comb}}) \ar[l]_-{\sim} & \mathbf{Pic}^{\op{fact}}(\op{Gr}_T) \ar[l] & \mathbf{Pic}^{\op{fact}}(\op{Div}(X)\underset{\mathbb Z}{\otimes}\Lambda_T) \ar[l]_-{(a)} \\
	& & \mathbf{Pic}^{\op{fact}}(\op{Gr}_{T,\op{lax}}) \ar[u]^{(c)} & \mathbf{Pic}^{\op{fact}}(\op{Gr}_{T,\op{rat}}) \ar[l]_-{(b)}\ar[u].
}
\end{equation}

\begin{prop}
\label{prop-classify-torus-case}
All morphisms in \eqref{eq-many-Gr-pic} are equivalences.
\end{prop}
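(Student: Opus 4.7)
The plan is to reduce each of the five arrows in \eqref{eq-many-Gr-pic} beyond the Lemma~\ref{lem-classify-Gr-comb} equivalence either to a tautological isomorphism of underlying prestacks, a formal consequence of \cite{BD04}, or an instance of Pic-contractibility of $\Ran(X)$ from \cite{Ta19}; after handling the concrete arrows, the remaining one falls out by 2-out-of-3 applied to the commutative right-hand square.

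First I would observe that the maps $\op{Gr}_{T,\op{comb}}\to\op{Gr}_T$ and $\op{Div}(X)\underset{\mathbb Z}{\otimes}\Lambda_T\to\op{Gr}_{T,\op{rat}}$ are in fact isomorphisms of factorization prestacks. Since $T$ is a torus, any $T$-bundle on $S\times X$ trivialized away from $\bigcup_i\Gamma_{x^{(i)}}$ is canonically of the form $\bigotimes_i\cal O(\lambda^{(i)}\Gamma_{x^{(i)}})$ for a unique function $\lambda^{(I)}:I\to\Lambda_T$ recording the multiplicity along each graph, which produces the inverse of the first map. Similarly, a $T$-bundle with a rational trivialization $(\cal P_T,\alpha)$ has a canonical $\Lambda_T$-valued Cartier divisor $D=\op{div}(\alpha)$ for which $(\cal P_T,\alpha)\cong(\cal O(D),1)$, giving the inverse of the second. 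Consequently the top-left and right-vertical arrows of \eqref{eq-many-Gr-pic} are tautologically equivalences.

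Next, arrow $(a)$ combines the Beilinson--Drinfeld classification of factorization line bundles on $\Lambda_T$-colored divisors \cite[\S3.10]{BD04}, giving $\mathbf{Pic}^{\op{fact}}(\op{Div}(X)\underset{\mathbb Z}{\otimes}\Lambda_T)\simeq\theta(\Lambda_T)$, with the isomorphism $\op{Gr}_{T,\op{comb}}\simeq\op{Gr}_T$ above and Lemma~\ref{lem-classify-Gr-comb}; both sides of $(a)$ are thereby identified with $\theta(\Lambda_T)$, and one verifies that the two identifications agree, as both extract the quadratic form $q$ and the line bundles $\cal L^{(\lambda)}$ from orders of vanishing along diagonals in $X^I$. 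For arrow $(c)$, I would show that a factorization line bundle on $\op{Gr}_T$ canonically extends along the non-invertible morphisms of $\op{Gr}_{T,\op{lax}}$: such a morphism corresponds to an enlargement $x^I\subset x^J$, the new coordinates may be assigned zero $\Lambda_T$-multiplicity so that the associated $T$-bundle factor is canonically trivial, and the required isomorphism of fibers is then supplied by the factorization isomorphism. This is the step where Pic-contractibility of $\Ran(X)$ \cite{Ta19} enters, ensuring that the extension is globally well-defined and unique. Finally, arrow $(b)$ is forced by 2-out-of-3 on the commutative right-hand square of \eqref{eq-many-Gr-pic}, the other three sides now being equivalences.

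I expect the main obstacle to be arrow $(c)$: one must verify that the factorization structure on a line bundle over $\op{Gr}_T$ is enough to implement the non-invertible ``lax'' morphisms of $\op{Gr}_{T,\op{lax}}$ coherently, and this is the natural locus for invoking \cite{Ta19}. Everything else reduces to isomorphisms of prestacks, to Beilinson--Drinfeld, or to formal diagram chasing.
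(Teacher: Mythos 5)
Your argument collapses at its first step: the map $\op{Gr}_{T,\op{comb}}\rightarrow\op{Gr}_T$ is \emph{not} an isomorphism of prestacks, because $\op{Gr}_T$ is not ind-reduced. Already for $T=\mathbb G_m$, a fixed point $x\in X$ and $S=\Spec k[\epsilon]/\epsilon^2$, the element $1+\epsilon t^{-1}\in\cal L_x\mathbb G_m(S)$ defines an $S$-point of $\op{Gr}_{\mathbb G_m,x}$ that does not lie in $(k[\![t]\!][\epsilon])^{\times}\cdot t^{\mathbb Z}$, hence does not factor through the discrete set $\Lambda_{\mathbb G_m}=\op{Gr}_{\mathbb G_m,\op{comb}}\big|_x$; so there is no ``unique multiplicity function $\lambda^{(I)}$'' attached to a general $S$-point of $\op{Gr}_T(S)$. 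This failure is precisely the reason the proposition is not a tautology and the reason the paper routes through $\op{Gr}_{T,\op{lax}}$ and $\op{Gr}_{T,\op{rat}}$ at all: one needs the fppf-local equivalence $\op{Gr}_{T,\op{lax}}\rightarrow\op{Gr}_{T,\op{rat}}$ of Barlev for arrow $(b)$, and the statement that $\op{Gr}_T\rightarrow\op{Gr}_{T,\op{rat}}$ induces an equivalence on rigidified line bundles (Zhu's Theorem 4.3.9(2), whose proof rests on the Pic-contractibility result of \cite{Ta19}) to control $(c)$. Your secondary claim that $\op{Div}(X)\otimes_{\mathbb Z}\Lambda_T\rightarrow\op{Gr}_{T,\op{rat}}$ is a tautological isomorphism is also unavailable here: the paper explicitly notes that this identification is a theorem of Campbell requiring $X$ proper, whereas the proposition must hold for arbitrary smooth curves. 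Since your proofs of the top-left arrow, the right vertical arrow, and (via them) arrows $(a)$ and $(b)$ all rest on these two isomorphisms of prestacks, the overall structure does not survive.

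Your sketch for $(c)$ --- extending a factorization line bundle along the non-invertible lax morphisms by declaring the new points to have multiplicity zero --- is in the right spirit but misplaces where the real input enters: the role of \cite{Ta19} is not to make such an extension coherent, but to establish that pulling back along $\op{Gr}_T\rightarrow\op{Gr}_{T,\op{rat}}$ is an equivalence on rigidified Picard groupoids (this is where the non-reducedness of $\op{Gr}_T$ is genuinely tamed), after which triviality of factorization line bundles along the unit section $\Ran(X)\rightarrow\op{Gr}_T$ gives the descent. Note also that the paper never proves $(a)$ directly: it only observes that $(a)$ admits a left inverse (the Beilinson--Drinfeld classification of factorization line bundles on colored divisors), and then lets the 2-out-of-3 property propagate equivalence through the whole diagram once $(b)$ and $(c)$ are known. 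If you want to salvage your outline, you must replace the two claimed prestack isomorphisms with Picard-level statements proved by these external inputs.
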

\begin{proof}
We shall deduce from existing literature how each of the labeled maps is an equivalence:
\begin{enumerate}[(a)]
	\item By \cite[\S3.10.7, Proposition]{BD04}, the composition of the top row defines an equivalence: $\mathbf{Pic}^{\op{fact}}(\op{Div}(X)\underset{\mathbb Z}{\otimes}\Lambda_T)\xrightarrow{\sim}\theta(\Lambda_T)$. This shows that the map $(a)$ has a left inverse.
	\item By \cite[Proposition 5.2.2]{Ba12}, the map $\op{Gr}_{T,\op{lax}}\rightarrow\op{Gr}_{T,\op{rat}}$ induces an equivalence after fppf sheafification. Hence pulling back defines an equivalence $\mathbf{Pic}(\op{Gr}_{T,\op{rat}})\xrightarrow{\sim}\mathbf{Pic}(\op{Gr}_{T,\op{lax}})$. One immediately checks that the additional data defining factorization structures on both are also equivalent. Hence $(b)$ is an equivalence.
	\item By \cite[Theorem 4.3.9(2)]{Zh16}, pulling back along $\op{Gr}_{T}\rightarrow\op{Gr}_{T,\op{rat}}$ defines an equivalence on \emph{rigidified} line bundles\footnote{\cite[Theorem 4.3.9(2)]{Zh16} is not given a proof in \emph{loc.cit.}, and we refer the reader to \cite{Ta19} for a complete proof of the key Pic-contractibility statement involved.}. On the other hand, every factorization line bundle on $\op{Gr}_T$ pulls back to one along the unit section $\Ran(X)\rightarrow\op{Gr}_T$, which is canonically trivial by Lemma \ref{lem-classify-Gr-comb} (applied to the trivial group). Thus a factorization line bundle on $\op{Gr}_T$ descends to a line bundle on $\op{Gr}_{T,\op{rat}}$, and the result has a canonical factorization structure as well, so we have an equivalence $\mathbf{Pic}^{\op{fact}}(\op{Gr}_{T,\op{rat}})\xrightarrow{\sim} \mathbf{Pic}^{\op{fact}}(\op{Gr}_T)$. This shows that $(c)$ is an equivalence.
\end{enumerate}
The undecorated maps in \eqref{eq-many-Gr-pic} are now equivalences by the 2-out-of-3 property.
\end{proof}

\begin{rem}
When $X$ is proper, \cite[Theorem 2.3.3]{Ca17} shows that the map $\op{Div}(X)\underset{\mathbb Z}{\otimes}\Lambda_T\rightarrow\op{Gr}_{T,\op{rat}}$ is an isomorphism of prestacks, which immediately implies that factorization line bundles on them are equivalent.
\end{rem}

\begin{rem}
We have the following equivalence for any smooth, fiberwise connected, affine group scheme $\mathbf G$ over $X$:
$$
\mathbf{Pic}^{\op{fact}}(\op{Gr}_{\mathbf G,\op{rat}})\xrightarrow{\sim}\mathbf{Pic}^{\op{fact}}(\op{Gr}_{\mathbf G,\op{lax}})\xrightarrow{\sim}\mathbf{Pic}^{\op{fact}}(\op{Gr}_{\mathbf G}).
$$
This is because the results \cite[Proposition 5.2.2]{Ba12} and \cite[Theorem 4.3.9(2)]{Zh16} both hold in this general context.
\end{rem}

\medskip

\section{Compatibility with the Brylinski-Deligne classification}
\label{sec-compatible}

In this section, we first summarize Brylinski-Deligne's classification of central extensions of $G$ by $\mathbf K_2$. Then we construct a functor from $\mathbf{Pic}^{\op{fact}}(\op{Gr}_G)$ to the same classification data and we prove that it is compatible with Gaitsgory's functor $\Phi_G$.

\subsection{Extensions by $\mathbf K_2$}
\label{sec-brde}

\subsubsection{} This subsection serves as a summary of the main result of \cite{BD01}. Let $G$ be a connected, reductive group over $k$. Fix a maximal torus $T\subset G$. We recall the notations $\theta(\Lambda_T)$ and $\theta^+(\Lambda_T)$ for the $\theta$-data associated to $\Lambda_T$ (see \S\ref{sec-theta}-\ref{sec-shifted-theta}).

\subsubsection{} We let $\mathbf K_2$ denote the Zariski sheafification of the presheaf on $\mathbf{Sch}^{\op{aff}}_{/X}$ that sends any $S\rightarrow X$ to $K_2(S)$. For a connected, reductive group $G$, we let $\mathbf{CExt}(G, \mathbf K_2)$ denote the Picard groupoid of central extensions
\begin{equation}
\label{eq-brde-data}
1 \rightarrow \mathbf K_2 \rightarrow E \rightarrow G \rightarrow 1,
\end{equation}
in the category of Zariski sheaves of groups on $\mathbf{Sch}_{/X}^{\op{aff}}$. This is Picard groupoid of \emph{Brylinski--Deligne data}.

\subsubsection{}
\label{sec-brde-class-torus} We will first define a functor
\begin{equation}
\label{eq-brde-class-torus}
\mathbf{CExt}(T, \mathbf K_2) \rightarrow \theta^+(\Lambda_T).
\end{equation}
Indeed, given a central extension $E$ of $T$, we construct a triple $(q, \cal L^{(\lambda)}, c_{\lambda,\mu}^+) \in \theta^+(\Lambda_T)$ from the following procedure:

\begin{enumerate}[(a)]
	\item The commutator in $E$ defines a map $\op{comm} : T\underset{\mathbb Z}{\otimes} T\rightarrow\mathbf K_2$ of Zariski sheaves on $\mathbf{Sch}^{\op{aff}}_{/X}$. For any $\lambda,\mu\in\Lambda_{T}$, the composition: $\mathbb G_m\underset{\mathbb Z}{\otimes}\mathbb G_m\xrightarrow{\lambda\otimes\mu} T\underset{\mathbb Z}{\otimes}T\rightarrow\mathbf K_2$ is some integral multiple of the universal symbol $\{-,-\}$ (c.f. \S3.8 of \emph{loc.cit.}). We call this integer $\kappa(\lambda,\mu)$. One then checks that $\kappa(-,-)$ is the bilinear form associated to some quadratic form $q$.
	
	\item Consider the projection $p : \mathbb G_m\times X\rightarrow X$. Using the vanishing result $\op R^1p_*\mathbf K_2=0$ of Sherman (c.f.~\S3.1 of \emph{loc.cit.}), we find an exact sequence of Zariski sheaves on $X$:
	$$
	1\rightarrow p_*\mathbf K_2 \rightarrow p_*E\rightarrow p_*T\rightarrow 1.
	$$
	Pushing out along the symbol map $p_*\mathbf K_2\rightarrow\mathbf K_1\cong\cal O_X^{\times}$, we obtain a multiplicative $\cal O_{X}^{\times}$-torsor over $p_*T$. The line bundle $\cal L^{(\lambda)}$ then arises as the fiber of the section of $p_*T$ defined by $\lambda\in\Lambda_{T}$.
	
	\item Note that the aforementioned multiplicative $\cal O_{X}$-torsor over $p_*T$ equips the system $\{\cal L^{(\lambda)}\}$ with the multiplicative structure $c_{\lambda,\mu}^+$. Its failure of commutativity is measured by $\kappa$, as desired.
\end{enumerate}

\subsubsection{}
It is proved in \emph{loc.cit.} that \eqref{eq-brde-class-torus} is an equivalence of Picard groupoids. We record here the \emph{un}shifted version of this equivalence:
\begin{equation}
\label{eq-brde-class-torus-unshifted}
\mathbf{CExt}(T,\mathbf K_2) \xrightarrow{\sim} \theta(\Lambda_{T}),
\end{equation}
i.e., it is the composition of \eqref{eq-brde-class-torus} with the equivalence of Picard groupoids $\theta^+(\Lambda_{T})\xrightarrow{\sim}\theta(\Lambda_{T})$ sending $\cal L^{(\lambda)}$ to $\cal L^{(\lambda)}\otimes\omega_X^{-q(\lambda)}$.

\subsubsection{}
\label{sec-brde-to-form}
We now turn to the general case. Note that there is always a functor:
\begin{equation}
\label{eq-brde-to-form}
\mathbf{CExt}(G, \mathbf K_2)\xrightarrow{\op{res}} \mathbf{CExt}(T, \mathbf K_2) \xrightarrow{\sim} \theta(\Lambda_{T}) \rightarrow Q(\Lambda_{T}, \mathbb Z),
\end{equation}
whose image lands in the $W$-invariant part of $Q(\Lambda_{T}, \mathbb Z)$. Thus, we may speak of \emph{the} quadratic form $q$ associated to an extension \eqref{eq-brde-data}.

\subsubsection{}
Suppose $G$ is semisimple and simply connected. Then Theorem 4.7 of \emph{loc.cit.} asserts that \eqref{eq-brde-to-form} defines an equivalence: $\mathbf{CExt}(G,\mathbf K_2)\xrightarrow{\sim} Q(\Lambda_{T},\mathbb Z)^{W}$. Thus for a semisimple, simply connected group $G$, there is a map which associates theta data to a $W$-invariant quadratic form:
\begin{equation}
\label{eq-ss-theta}
Q(\Lambda_{T}, \mathbb Z)^{W} \rightarrow \theta(\Lambda_{T}).
\end{equation}

\subsubsection{}
Let $\widetilde G_{\op{der}}$ be the simply connected cover of $G_{\op{der}}$. It contains a maximal torus $\widetilde T_{\op{der}}$ which is the preimage of $T_{\op{der}}$. We now let $\theta_G(\Lambda_{T})$ denote the Picard groupoid classifying:
\begin{enumerate}[(a)]
	\item a theta datum $(q,\cal L^{(\lambda)},c_{\lambda,\mu})$ for $\Lambda_{T}$, where $q$ is Weyl-invariant.
	\item an isomorphism $\varphi$ between the following theta data for $\Lambda_{\widetilde T_{\op{der}}}$:
	\begin{itemize}
		\item the restriction of $(q,\cal L^{(\lambda)},c_{\lambda,\mu})$ to $\Lambda_{\widetilde T_{\op{der}}}$;
		\item the theta data associated to $q\big|_{\Lambda_{\widetilde T_{\op{der}}}}$ via \eqref{eq-ss-theta}.
	\end{itemize}
\end{enumerate}

\noindent
In other words, $\varphi$ consists of isomorphisms between line bundles, preserving their ($\omega$-twisted) multiplicative structure. We shall call $\theta_G(\Lambda_{T})$ the Picard groupoid of \emph{enhanced} theta data. By definition, we have a functor:
\begin{equation}
\label{eq-brde-class}
\Phi_{\op{BD}} : \mathbf{CExt}(G, \mathbf K_2) \rightarrow \theta_G(\Lambda_{T}),
\end{equation}
obtained by restrictions to $T$ and $\widetilde T_{\op{der}}$. The main theorem of \cite{BD01} is that \eqref{eq-brde-class} is an equivalence of Picard groupoids, i.e., central extensions of $G$ by $\mathbf K_2$ are classified by enhanced theta data.

\subsection{Gaitsgory's functor $\Phi_G$}

\subsubsection{}
\label{sec-ga-functor} Under the condition that the characteristic of $k$ does not divide the integer $N_G$, Gaitsgory \cite{Ga18} constructed a functor:
\begin{equation}
\label{eq-ga-functor-text}
\Phi_G : \mathbf{CExt}(G, \mathbf K_2) \rightarrow \mathbf{Pic}^{\op{fact}}(\op{Gr}_G).
\end{equation}
Only two features of $\Phi_G$ will be used in proving its compatibility with the Brylinski-Deligne classification. We first cast them in informal language:
\begin{enumerate}[(a)]
	\item Given a central extension \eqref{eq-brde-data}, its image under $\Phi_G$ is a line bundle $\cal L$ over $\op{Gr}_G$ with additional factorization data; for a \emph{regular} affine scheme $S\rightarrow\op{Gr}_G$, we need the restriction $\cal L\big|_S$ to be given by ``taking the residue'' along $S\times X\rightarrow S$.
	\item Suppose $G=T$ is a torus; we need the functor $\Phi_{T}$ to factor through the Picard groupoid of \emph{multiplicative} factorization line bundles on $\cal LT$, and for a closed point $x\in X$, we need the multiplicative structure on $\cal L_xT$ to be given by the ``tautological'' one.
\end{enumerate}
\noindent
We will make precise what features (a) and (b) mean in the rest of this subsection, and explain how they can be deduced from \emph{loc.cit}.

\subsubsection{}
\label{sec-taking-residue} Let $S$ be a \emph{regular} affine scheme over $k$ and $\pi : \fr X\rightarrow S$ be a smooth relative curve, whose fibers are geometrically connected. Furthermore, suppose we have a finite set $\{x^I\}$ of sections $x^{(i)} : S\rightarrow\fr X$. Let $\Gamma_{x^I}$ denote the (scheme-theoretic) union of their images, and $U_{x^I}:=\fr X-\Gamma_{x^I}$ be its complement.

We will construct a functor, referred to hereafter as \emph{taking the residue} along $\pi$:
\begin{equation}
\label{eq-taking-the-residue}
\left\{\txt{$\mathbf K_2$-gerbes $\cal G$ on $\fr X$ with \\ neutralization $\gamma$ over $U_{x^I}$}\right\} \rightarrow \mathbf{Pic}(S).
\end{equation}
Indeed, the datum $(\cal G,\gamma)$ is equivalent to a section of $\iota^!\mathbf K_2[2]$ over $\fr X$, where $\iota : \Gamma_{x^I}\hookrightarrow\fr X$ is the closed immersion. On the other hand, the Gersten resolution of $\mathbf K_2$ on $\fr X$ shows that $\iota^!\mathbf K_2[2]$ is quasi-isomorphic to the complex concentrated in degrees $[-1,0]$:
\begin{equation}
\label{eq-gersten}
\bigoplus_{i\in I}(\iota_{\eta^{(i)}})_* K_1(\eta) \rightarrow \bigoplus_{\substack{\op{codim}(\nu)=1 \\ \text{in }\Gamma_{x^I}}} (\iota_{\nu})_*\mathbb Z
\end{equation}
where $\iota_{\eta^{(i)}}$ (resp.~$\iota_{\nu}$) denotes the inclusion of the generic point of the $i$th section (resp.~codimension-one point $\nu$ of $\Gamma_{x^I}$). On the other hand, $\mathbf K_1[1]$ over $S$ is quasi-isomorphic to:
$$
(\iota_{\eta})_*K_1(\eta) \rightarrow \bigoplus_{\substack{\op{codim}(\nu)=1 \\ \text{in } S}} (\iota_{\nu})_*\mathbb Z.
$$
Thus the image of \eqref{eq-gersten} under $\pi$ maps to $\mathbf K_1[1]$ via summation. Hence a section of $\iota^!\mathbf K_2[2]$ over $\fr X$ gives rise to a section of $\mathbf K_1[1]\cong\cal O_S^{\times}[1]$, i.e., a line bundle on $S$.

\subsubsection{}
\label{sec-smooth-test} Given an extension $E$ \eqref{eq-brde-data} and a map $S\rightarrow\op{Gr}_G$ specified by the triple $(\{x^I\}, \cal P_G, \alpha)$ where $\cal P_G$ is \emph{Zariski} locally trivial, we obtain a (Zariski) $\mathbf K_2$-gerbe $\cal G$ over $S\times X$, which classifies an $E$-torsor $\cal P_E$ equipped with an identification of its induced $G$-torsor $(\cal P_E)_G\xrightarrow{\sim}\cal P_G$. The trivialization $\alpha$ gives rise to a neutralization $\gamma$ of $\cal G$ over $U_{x^I}$.

Suppose $S$ is regular, then $(\cal G,\gamma)$ produces a line bundle on $S$ by taking the residue \eqref{eq-taking-the-residue} along $\pi : S\times X\rightarrow S$. This process also applies when $\cal P_G$ is only \'etale locally trivial, since \'etale locally on $S$ the bundle $\cal P_G$ becomes Zariski locally trivial (see \cite{DS95}). The fact that $\Phi_G(E)\big|_S$ naturally agrees with this line bundle is the content of \cite[\S2.3]{Ga18}; this is what we meant in part (a) of \S\ref{sec-ga-functor}.

\subsubsection{} Recall that a \emph{multiplicative} line bundle $\cal L$ on $\cal LG$ amounts to the additional isomorphism:
\begin{equation}
\label{eq-mult}
\op{mult}^*\cal L\xrightarrow{\sim} \cal L\boxtimes\cal L
\end{equation}
over $\cal LG\underset{\op{Ran}(X)}{\times}\cal LG$ that satisfies the cocycle condition on the triple product. If $\cal L$ is a factorization line bundle, then being multiplicative amounts to an isomorphism \eqref{eq-mult} that is compatible with the factorization structures on both sides.

We let $\mathbf{Pic}^{\op{fact},\times}(\cal LG)$ (resp.~$\mathbf{Pic}^{\op{fact},\times}_{/\cal L^+G}(\cal LG)$) denote the Picard groupoid of multiplicative factorization line bundles on $\cal LG$ (resp.~together with a trivialization \emph{as such} over $\cal L^+G$). Clearly, there is a descent functor:
$$
\mathbf{Pic}^{\op{fact},\times}_{/\cal L^+G}(\cal LG) \rightarrow \mathbf{Pic}^{\op{fact}}(\op{Gr}_G).
$$
We now state part (b) of \S\ref{sec-ga-functor} as a lemma:

\begin{lem}
\label{lem-mult-lift}
\begin{enumerate}[(a)]
	\item The functor $\Phi_T$ factors through $\mathbf{Pic}_{/\cal L^+T}^{\op{fact},\times}(\cal LT)$, i.e., $\Phi_T(E)$ has a canonical multiplicative structure over $\cal LT$, trivialized over $\cal L^+T$;
	\item Over a closed point $x\in X$, the restriction of the above multiplicative structure to the abstract group $T(\cal K_x)$\footnote{i.e., the group of $k$-points of $\cal L_xT$.} agrees with that on the $k^{\times}$-torsor coming from the push-out of
	\begin{equation}
	\label{eq-brde-at-loop}
	0 \rightarrow \mathbf K_2(\cal K_x) \rightarrow E(\cal K_x) \rightarrow T(\cal K_x) \rightarrow 0
	\end{equation}
	along the residue map $\mathbf K_2(\cal K_x)\rightarrow k^{\times}$. The same holds over any field extension $k\subset k'$.
\end{enumerate}
\end{lem}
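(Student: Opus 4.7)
The plan has two parts: construct the multiplicative structure and trivialization required by (a), then identify the resulting central extension at a closed point with the Brylinski--Deligne push-out required by (b).

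\textbf{Part (a).} I would construct the multiplicative structure by tracing through Gaitsgory's construction of $\Phi_T(E)$. For an affine $S$-point of $\cal LT$, Beauville--Laszlo identifies the datum $\mathring D_{x^I} \to T$ with a $T$-bundle on $S \times X$ equipped with trivializations on both $D_{x^I}$ and the complement $S \times X - \Gamma_{x^I}$; the construction of \S\ref{sec-smooth-test} uses only the latter. Given two $S$-points $s_1, s_2$, their product in $\cal LT$ corresponds to the tensor of the associated $T$-bundles with product trivializations. Because $E$ is a sheaf of groups, the assignment sending a $T$-bundle $\cal P_T$ to the $\mathbf K_2$-gerbe of its $E$-lifts is additive in $\cal P_T$, and the residue functor \eqref{eq-taking-the-residue} is additive in $\mathbf K_2$-gerbes. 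Composing yields the isomorphism $\op{mult}^*\Phi_T(E) \xrightarrow{\sim} \Phi_T(E) \boxtimes \Phi_T(E)$, and the cocycle condition follows from associativity of multiplication in $E$. Compatibility with the factorization structure is automatic because all operations are local on $X$. The trivialization over $\cal L^+T$ follows because the map $\cal L^+T \to \op{Gr}_T$ factors through the unit section (the $T$-bundle glued from a map $D_{x^I} \to T$ is globally trivial), where $\Phi_T(E)$ is canonically trivial by Lemma \ref{lem-classify-Gr-comb} applied to the trivial group; multiplicativity of this trivialization reduces to the fact that the unit section is a subgroup and the trivialization is canonical.

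\textbf{Part (b).} I would compute the fiber of $\Phi_T(E)$ at a $k$-point $f \in T(\cal K_x) = \cal L_xT(k)$ directly via the residue construction of \S\ref{sec-taking-residue}, applied with $S = \Spec(k)$ and $\fr X = X$: the associated $T$-bundle is trivial on $X - x$ and on $D_x$, glued by $f$ across the punctured disc. A choice of lift $\tilde f \in E(\cal K_x)$ of $f$ determines a neutralization of the corresponding $\mathbf K_2$-gerbe over $X - x$; the pair $(\cal G_f, \gamma_f)$ is then classified, via Gersten's resolution \eqref{eq-gersten}, by the class of $\tilde f$, well-defined modulo $\mathbf K_2(\cal K_x)$-coboundaries. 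The residue map in that resolution, $\iota^!\mathbf K_2[2] \to \mathbf K_1[1]$, restricts on $\Spec(\cal K_x)$ to the tame symbol $\mathbf K_2(\cal K_x) \to k^\times$. Thus $\Phi_T(E)|_f$ is naturally the push-out of the $\mathbf K_2(\cal K_x)$-torsor of lifts of $f$ along residue, and the multiplicative structure from (a)---which on $k$-points corresponds to multiplying lifts in $E(\cal K_x)$---matches the group operation on the push-out \eqref{eq-brde-at-loop}. The case of a field extension $k \subset k'$ is handled by the same argument applied after base change.

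\textbf{Main obstacle.} The delicate step is verifying that the abstract residue functor \eqref{eq-taking-the-residue} specializes at $\Spec(k) \to X$ to the classical tame symbol on $K_2(\cal K_x)$, and that this identification is strictly compatible with taking products of $T$-bundles. This compatibility is implicit in the construction of $\Phi_G$ in \cite[\S2-3]{Ga18} but must be traced carefully through Gersten's resolution and its functoriality under $T$-bundle multiplication; once established, both parts of the lemma follow with little additional input.
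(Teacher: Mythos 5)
Your Part (b) and the overall shape of Part (a) are in the right spirit, but Part (a) has a genuine gap: the construction you give only produces the isomorphism $\op{mult}^*\Phi_T(E)\xrightarrow{\sim}\Phi_T(E)\boxtimes\Phi_T(E)$ over \emph{regular} affine test schemes of $\cal LT\underset{\Ran(X)}{\times}\cal LT$. The $\mathbf K_2$-gerbe description of $\Phi_T(E)$ and the residue functor \eqref{eq-taking-the-residue} are only available when the test scheme $S$ is regular (that is exactly the content of feature (a) in \S\ref{sec-ga-functor}); but a multiplicative structure is an isomorphism of line bundles over the full fiber product, which must be specified on arbitrary --- in particular non-reduced --- affine test schemes, and $\cal LT$ is not of ind-finite type, so there is no a priori reason the isomorphism extends, let alone canonically. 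The actual definition of $\Phi_T(E)$ on general $S$ goes through a (non-unique) lift $\tilde e: X\times\op BT\rightarrow\mathbf K_{\ge 2}$ of the classifying map $e$ to the truncated K-theory spectrum, and the multiplicative structure has to be built at that level: one uses the canonical homotopy between $\tilde e(t)+\tilde e(t')$ and $\tilde e(tt')$ as maps $\mathring D_{x^I}\rightarrow\mathbf K_{\ge 2}[-1]$, pushes it into $\iota^!\mathbf K_{\ge 2}$, and then checks independence of the choice of $\tilde e$ by testing over Artinian test schemes, where the relevant map \eqref{eq-map-from-full-sptr} factors through $\tau^{\le 0}\pi_*\iota^!\mathbf K_2$ and the construction becomes lift-free. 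Testing on Artinian schemes suffices for \emph{comparing} two already-defined isomorphisms (their ratio is an invertible function), but not for \emph{constructing} one, which is why your strategy of defining everything on nice test schemes does not close.

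Your ``main obstacle'' paragraph flags a more minor point (that the Gersten boundary map restricts to the tame symbol at $\Spec(\cal K_x)\rightarrow\Spec(\cal O_x)$), which is indeed needed for Part (b) but is standard; the real difficulty you should have isolated is the one above. Once the multiplicative structure is constructed via the spectrum-level lift, Part (b) is, as you say, immediate by specializing to $S=\Spec(k)$ (which is both regular and Artinian, so all descriptions coincide there), and your identification of the fiber with the push-out of the torsor of lifts along the residue is correct.
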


\begin{rem}
Part (b) makes sense since $\Phi_T(E)\big|_t$ for $t\in T(\cal K_x)$ agrees with the $k^{\times}$-torsor induced from \eqref{eq-brde-at-loop}; this follows from the description of $\Phi_T(E)$ on regular test schemes (\S\ref{sec-smooth-test}).
\end{rem}

\begin{proof}[Proof of Lemma \ref{lem-mult-lift}]
Recall that $\cal L:=\Phi_T(E)$ is constructed as follows. The datum $E$ can be interpreted as a pointed morphism $e : X\times\op BT\rightarrow\op B^2\mathbf K_2$. Let $\mathbf K$ denote the full $K$-theory spectrum, regarded as a Zariski sheaf on $\mathbf{Sch}^{\op{aff}}$. Then $e$ lifts (non-uniquely) to some $\tilde e : X\times\op BT\rightarrow\mathbf K_{\ge 2}$ (\cite[\S5.3.1]{Ga18}). Hence the data $(\{x^I\},\cal P_T,\alpha)$ of an $S$-point of $\op{Gr}_T$ (where we may again assume $\cal P_T$ to be Zariski-locally trivial) give us a section of $\mathbf K_{\ge 2}$ over $S\times X$ with support on $\Gamma_{x^I}$. The line bundle $\cal L_{\tilde e}\big|_S$ is then constructed using the map:
\begin{equation}
\label{eq-map-from-full-sptr}
\tau^{\le 0}\pi_*\iota^!\mathbf K_{\ge 2} \rightarrow \cal O_S^{\times}[1]
\end{equation}
(c.f.~(3.2.2) of \emph{loc.cit.}). For two lifts $\tilde e$ and $\tilde e'$, we need to produce a canonical isomorphism $\cal L_{\tilde e}\xrightarrow{\sim} \cal L_{\tilde e'}$. This is done as follows:
\begin{enumerate}[(a)]
	\item for $S$ the spectrum of an Artinian $k$-algebra, \eqref{eq-map-from-full-sptr} factors through $\tau^{\le 0}\pi_*\iota^!\mathbf K_2$, so we obtain a \emph{canonical} isomorphism $\cal L_{\tilde e}\big|_S\xrightarrow{\sim} \cal L_{\tilde e'}\big|_S$;
	\item there exists an isomorphism $\cal L_{\tilde e}\xrightarrow{\sim}\cal L_{\tilde e'}$ which restricts to the one in (a) for any $S$ the spectrum of an Artinian $k$-algebra (\S5.3.4-6 of \emph{loc.cit.}).
\end{enumerate}

We now claim that $\cal L_{\tilde e}\big|_{\cal LT}$ acquires a canonical multiplicative structure. Indeed, $\tilde e$ induces a morphism $X\times T\rightarrow\mathbf K_{\ge 2}[-1]$ of group sheaves. Given $S$-points $t,t'$ of $\cal LT$ over the same point $x^I\in\op{Ran}(X)$, we may view them both as maps $\overset{\circ}{D}_{x^I} \rightarrow X\times T$. There is a canonical homotopy between $\tilde e(t) + \tilde e(t')$ and $\tilde e(tt')$ as maps $\overset{\circ}{D}_{x^I} \rightarrow\mathbf K_{\ge 2}[-1]$. Under the map $\mathbf K_{\ge 2}\big|_{\overset{\circ}{D}_{x^I}}[-1]\rightarrow\iota^!\mathbf K_{\ge 2}$ of sheaves over $D_{x^I}$, we obtain a canonical homotopy between the corresponding sections of $\iota^!\mathbf K_{\ge 2}$; it gives rise to the desired multiplicative structure $\cal L_{\tilde e}\big|_t\otimes\cal L_{\tilde e}\big|_{t'}\xrightarrow{\sim}\cal L_{\tilde e}\big|_{tt'}$ under \eqref{eq-map-from-full-sptr}.

It remains to check that for two lifts $\tilde e$ and $\tilde e'$, the canonical isomorphism $\cal L_{\tilde e}\xrightarrow{\sim}\cal L_{\tilde e'}$ is compatible with the multiplicative structures on both sides. This amounts to checking that the following diagram of line bundles over $\cal LT\underset{\op{Ran}(X)}{\times}\cal LT$ commutes:
$$
\xymatrix@C=1.5em@R=1.5em{
\op{mult}^*\cal L_{\tilde e} \ar[r]\ar[d] & \cal L_{\tilde e}\boxtimes\cal L_{\tilde e}\ar[d] \\
\op{mult}^*\cal L_{\tilde e'} \ar[r] & \cal L_{\tilde e'}\boxtimes\cal L_{\tilde e'}.
}
$$
It suffices to test the commutativity over $S$ the spectrum of an Artinian $k$-algebra. Note again that for such $S$, \eqref{eq-map-from-full-sptr} factors through $\tau^{\le 0}\pi_*\iota^!\mathbf K_2$, so the construction of the multiplicative structure does \emph{not} require a lift of $e$. Therefore, we have equipped $\cal L$ with a canonical multiplicative structure over $\cal LT$.

Part (b) of the lemma is immediate from the above construction, applied to $S=\Spec(k)$ (or $\Spec(k')$ for a field extension $k\subset k'$).
\end{proof}

\subsection{Compatibility: torus case}
\label{sec-comp-torus}

\subsubsection{} Fix a torus $T$. Recall the equivalence of Proposition \ref{prop-classify-torus-case}:
\begin{equation}
\label{eq-GrT-to-theta}
\mathbf{Pic}^{\op{fact}}(\op{Gr}_T)\xrightarrow{\sim}\theta(\Lambda_{T}).
\end{equation}
The goal of this subsection is to prove:

\begin{lem}
\label{lem-torus-compatible}
The following diagram of Picard groupoids commutes functorially in $T$:
\begin{equation}
\xymatrix@C=0em@R=1.5em{
	\mathbf{CExt}(T, \mathbf K_2) \ar[rr]^-{\Phi_T}\ar[dr]_-{\eqref{eq-brde-class-torus-unshifted}} & & \mathbf{Pic}^{\op{fact}}(\op{Gr}_T) \ar[dl]^-{\eqref{eq-GrT-to-theta}} \\
	& \theta(\Lambda_T)
}
\end{equation}
\end{lem}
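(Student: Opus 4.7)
The plan is to reduce the commutativity of the triangle to matching each of the three components $(q, \cal L^{(\lambda)}, c_{\lambda,\mu})$ of an object of $\theta(\Lambda_T)$ separately. Since the equivalence $\theta^+(\Lambda_T) \simeq \theta(\Lambda_T)$ is given by tensoring $\cal L^{(\lambda)}$ with $\omega_X^{-q(\lambda)}$, and the functor \eqref{eq-brde-class-torus-unshifted} is obtained from the shifted version of \S\ref{sec-brde-class-torus} by composing with this equivalence, it is enough to match the underlying line bundles, the multiplicative structures, and the quadratic form $q$.

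For the line bundles $\cal L^{(\lambda)}$: on the Gaitsgory side, the recipe of Lemma \ref{lem-classify-Gr-comb} picks out the pullback of $\Phi_T(E)$ along the map $X \to \op{Gr}_T$, $x \mapsto (x, \cal O(\lambda\Gamma_x), \textnormal{taut})$. Since $X$ is regular, feature (a) of $\Phi_T$ recalled in \S\ref{sec-smooth-test} expresses this pullback as the Gersten-residue line bundle of the $\mathbf K_2$-gerbe on $X \times X$ associated to $E$ and this test $T$-bundle. On the Brylinski--Deligne side, $\cal L^{(\lambda)}$ is obtained by pushing out $p_*E$ along the symbol map over $p: \mathbb G_m \times X \to X$ and evaluating at the section of $p_*T$ defined by the cocharacter $\lambda$. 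I plan to identify both with a common ``universal residue'' functor sending an extension $E$ together with a generically trivialized $T$-bundle on $S \times X$ to a line bundle on $S$ via the Gersten resolution of $\mathbf K_2$; the two recipes then agree by the compatibility of Gersten resolutions under smooth pullback.

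For the remaining data, the key input is Lemma \ref{lem-mult-lift}. It endows $\Phi_T(E)$ with a canonical multiplicative factorization structure over $\cal LT$, trivialized over $\cal L^+T$, whose restriction to $T(\cal K_x)$ at each closed point $x \in X$ matches the $k^\times$-torsor obtained from $E(\cal K_x) \to T(\cal K_x)$ by pushout along the residue map. The Gaitsgory-side isomorphism $c_{\lambda,\mu}$, obtained by restricting the factorization isomorphism on $X^2$ across the diagonal, and the BD-side $c_{\lambda,\mu}^+$, obtained from the multiplicative $\cal O_X^\times$-torsor on $p_*T$, are then both evaluations of this single multiplicative structure along the family of cocharacter loops $\lambda(t) \in T(\cal K_x)$, so they agree up to the $\omega_X^{q(\lambda)}$-shift handled in the first paragraph. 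Similarly the bilinear form $\kappa$, which on the Gaitsgory side is read off from the $\Sigma_2$-twist in the factorization isomorphism and on the BD side from the commutator $T \otimes T \to \mathbf K_2$, reduces in both cases to the symmetrized commutator of the extension $E$.

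The principal technical obstacle I anticipate is the line bundle comparison in the second paragraph: the BD recipe lives over $\mathbb G_m \times X$ and invokes Sherman's vanishing $R^1p_*\mathbf K_2 = 0$, while the Gaitsgory recipe lives over $X \times X$ and invokes the Gersten resolution along the diagonal. Matching them rigorously requires presenting both as instances of one universal functor and tracking the local behavior carefully near the zero section and the diagonal respectively. Once this identification is established, the comparison of $c$ and $q$ follows more formally from Lemma \ref{lem-mult-lift}, and functoriality in $T$ is automatic because every construction involved is manifestly natural in $T$.
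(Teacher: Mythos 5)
There is a genuine gap in the line bundle comparison, and it sits exactly where you locate the ``principal technical obstacle,'' but your proposed resolution would give the wrong answer. If the two recipes were instances of a single universal residue functor compatible with smooth pullback, you would obtain an isomorphism $\cal L^{(\lambda)}\cong\cal L_+^{(\lambda)}$ with \emph{no} twist; since the functor \eqref{eq-brde-class-torus-unshifted} is the shifted classification composed with $\cal L_+^{(\lambda)}\leadsto\cal L_+^{(\lambda)}\otimes\omega_X^{-q(\lambda)}$, the triangle would then fail to commute whenever $q\neq 0$. The correct statement is \eqref{eq-isom-line-bundles}: $\cal L_+^{(\lambda)}\cong\cal L^{(\lambda)}\otimes\omega_X^{q(\lambda)}$. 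The source of the twist is geometric and is not detected by any compatibility of Gersten resolutions: the Brylinski--Deligne line bundle is a residue computed on the trivial line bundle $\mathbb A^1_X\to X$ along its zero section, while the Gaitsgory line bundle is a residue computed on $X\times X\to X$ along the diagonal, whose normal bundle is $T_X$. The paper makes this precise by deforming the diagonal to the normal cone, which exhibits $\cal L^{(\lambda)}$ as $\cal T_X^{q_1(\lambda)}\otimes\cal L_+^{(\lambda)}$ (see \eqref{eq-torus-compatible-rev}), where $q_1(\lambda)$ is the weight of a fiberwise $\mathbb G_m$-action on the total space of $\cal L_+^{(\lambda)}$.

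Even once this is set up, identifying $q_1(\lambda)=q(\lambda)$ is a separate computation for which your proposal has no substitute: the paper derives the two identities \eqref{eq-quadratic-calc-1} and \eqref{eq-quadratic-calc-2} --- one from the multiplicativity of $E(k[t,t^{-1}])$ and the commutator $\op{comm}(a^{\lambda},t^{\lambda})$, the other from functoriality under the squaring map $t\mapsto t^2$ --- and combines them to get $q_1(\lambda)=\tfrac{1}{2}\kappa(\lambda,\lambda)=q(\lambda)$. Your treatment of the bilinear form via Lemma \ref{lem-mult-lift} is broadly in line with the paper, which shows $\kappa=\kappa'$ by evaluating the conjugation action of $T(\cal O_x)$ on the fiber at $t^{\mu}$ in two ways (the Contou-Carr\`ere symbol on the $\mathbf K_2$ side, strong $\cal L^+T$-equivariance on the factorization side), so that part of the sketch is sound. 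But the comparison of the underlying line bundles is the heart of the lemma, it does not reduce to formal compatibilities, and the mechanism you propose for it would prove a false statement.
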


\begin{rem}
Although Lemma \ref{lem-torus-compatible} appears as the special case of Proposition \ref{prop-full-compatible} for $G=T$, its proof contains most of the technical difficulties.
\end{rem}

\subsubsection{Notations}
Fix an object $E$ of $\mathbf{CExt}(T,\mathbf K_2)$. We denote its image in $\theta^+(\Lambda_T)$ under \eqref{eq-brde-class-torus} by $(q, \cal L^{(\lambda)}_+, c_{\mu,\nu}^+)$, and its image under $\Phi_T$ by $\cal L$. The image of $\cal L$ in $\theta(\Lambda_T)$ will be denoted by $(q', \cal L^{(\lambda)}, c_{\mu,\nu})$. We ought to show:
\begin{enumerate}[(a)]
	\item $q=q'$;
	\item there is a canonical system of isomorphisms:
	\begin{equation}
	\label{eq-isom-line-bundles}
	\cal L_+^{(\lambda)} \xrightarrow{\sim} \cal L^{(\lambda)}\otimes\omega_X^{q(\lambda)}
	\end{equation}
	which respects $c_{\mu,\nu}^+$ and $c_{\mu,\nu}$.
\end{enumerate}

\subsubsection{Quadratic forms} We first show $q=q'$ by checking that their bilinear forms $\kappa$ and $\kappa'$ agree. Fixing a closed point $x\in X$ and any co-character $\mu\in\Lambda_T$, we will show that $\kappa(-, \mu)$ and $\kappa'(-,\mu)$ define the same character $T(k')\rightarrow\mathbb G_m(k')$ for every field extension $k\subset k'$; this will imply that $\kappa=\kappa'$.\footnote{Indeed, for every $\lambda\in\Lambda_T$, suppose $z\leadsto z^{\kappa(\lambda,\mu)}$ and $z\leadsto z^{\kappa'(\lambda,\mu)}$ define the same map $\mathbb G_m(k')\rightarrow\mathbb G_m(k')$ for all field extension $k\subset k'$. By suitably choosing $k'$, we can ensure that $(k')^{\times}$ contains an element of infinite order. Thus $\kappa(\lambda,\mu)$ agrees with $\kappa'(\lambda,\mu)$.}

We now further fix a uniformizer of the completed local ring $t\in\cal O_x$. This provides an isomorphism $k[\![t]\!]\xrightarrow{\sim}\cal O_x$, so we regard $t^{\mu}$ as an element of $T(\cal K_x)$. Consider the central extension \eqref{eq-brde-at-loop} corresponding to $x\in X$. Pushing-out along the residue map $\mathbf K_2(\cal K_x)\rightarrow k^{\times}$, we obtain central extension:
$$
0 \rightarrow k^{\times} \rightarrow E' \rightarrow T(\cal K_x) \rightarrow 0.
$$
So the conjugation action of $T(\cal O_x)$ on the fiber of $E(\cal K_x)\rightarrow T(\cal K_x)$ at $t^{\mu}$ induces a map:
\begin{equation}
\label{eq-conj-action}
T(\cal O_x)\rightarrow k^{\times}.
\end{equation}
We will calculate this map (and its variant for a field extension $k\subset k'$) in two ways.

\smallskip

\textbf{Step 1.} We first show that the map \eqref{eq-conj-action} is given by the composition:
$$
T(\cal O_x)\xrightarrow{\op{ev}} T(k)\xrightarrow{\kappa(-,\mu)} k^{\times}.
$$
Indeed, recall from \S\ref{sec-brde-class-torus}(a) that the composition $\mathbb G_m\underset{\mathbb Z}{\otimes}\mathbb G_m\xrightarrow{\lambda\otimes\mu} T\underset{\mathbb Z}{\otimes}T\xrightarrow{\op{comm}}\mathbf K_2$ is the $\kappa(\lambda,\mu)$-multiple of the universal symbol. Thus the map:
$$
\mathbb G_m(\cal K_x)\underset{\mathbb Z}{\otimes}\mathbb G_m(\cal K_x) \xrightarrow{\lambda\otimes\mu}T(\cal K_x) \underset{\mathbb Z}{\otimes} T(\cal K_x) \xrightarrow{\op{comm}} \mathbf K_2(\cal K_x) \xrightarrow{\op{res}} k^{\times}
$$
is the $\kappa(\lambda,\mu)$-multiple of the Contou-Carr\`ere symbol $\{f,g\}:=(f^{\op{ord}(g)}/g^{\op{ord}(f)})(0)$. Hence the conjugation action of $f\in\mathbb G_m(\cal O_x)$ (through $\lambda$) on $E'$ is given by $e'\leadsto \{f, t\}^{\kappa(\lambda,\mu)}e'$. Note that $\{f,t\}=f(0)$, as required.

For a field extension $k\subset k'$, the above computation holds without modification.

\smallskip

\textbf{Step 2.} We now calculate the map \eqref{eq-conj-action} alternatively as follows. Recall the canonical multiplicative structure on $\cal L\big|_{\cal LT}$ from Lemma \ref{lem-mult-lift}. It induces a \emph{strong} $\cal L^+T$-equivariance structure on $\cal L$ (over $\op{Gr}_T$, c.f.~\cite[\S7.3.4]{GL16}) with respect to the trivial left $\cal L^+T$-action; in other words, the twisted product $\cal L\widetilde{\boxtimes}\cal L$ on the convolution Grassmannian $\widetilde{\op{Gr}}_{T,X^2}$ is identified with the pullback of $\cal L^{(2)}$ along the action map $\widetilde{\op{Gr}}_{T,X^2} \rightarrow \op{Gr}_{T,X^2}$, in a way that is compatible with the factorization structure of $\cal L$.

Furthermore, its value at $\op{Gr}_{T,x}^{\mu}$ is given by the conjugation action \eqref{eq-conj-action}. We claim now that the map \eqref{eq-conj-action} is given by
$$
T(\cal O_x)\xrightarrow{\op{ev}} T(k)\xrightarrow{\kappa'(-,\mu)} k^{\times}.
$$
Indeed, this follows from the fact that for a factorization line bundle $\cal L$ on $\op{Gr}_T$ with associated bilinear form $\kappa'$, every strong $\cal L^+T$-equivariance structure acts on $t^{\mu}\in\op{Gr}_{T,x}$ through the composition $\cal L^+T\xrightarrow{\op{ev}} T \xrightarrow{\kappa'(-,\mu)} \mathbb G_m$ (c.f.~\cite[\S7.4]{GL16}).

Again for a field extension $k\subset k'$, the above computation holds without modification. This finishes the proof that $\kappa=\kappa'$.

\subsubsection{Isomorphisms of line bundles} We now construct the isomorphisms \eqref{eq-isom-line-bundles}. The strategy is to first identify $\cal L^{(\lambda)}$ with the twist of $\cal L_+^{(\lambda)}$ by some power of the tangent sheaf $\cal T_X$, and then determine this power.

\smallskip

\textbf{Step 1.} Consider the diagonal embedding $\Delta: X\hookrightarrow X\times X$. Define $\cal G^{(\lambda)}$ as the $\mathbf K_2$-gerbe on $X\times X$ classifying a $\op{pr}_2^*E$-torsor $\cal P_E$, together with an isomorphism $(\cal P_E)_T\xrightarrow{\sim}\cal O(\lambda\Delta)$. Then $\cal G^{(\lambda)}$ comes equipped with a neutralization $\gamma$ over $X\times X-\Delta$. The line bundle $\cal L^{(\lambda)}$ arises from $(\cal G^{(\lambda)}, \gamma)$ by taking the residue along $\op{pr}_1$ (c.f.~\S\ref{sec-taking-residue}).

Let $X\times\mathbb A^1 \hookrightarrow \fr X \rightarrow\mathbb A^1$ be the deformation of the diagonal embedding to the normal cone, constructed as the blow-up of $X\times X\times\mathbb A^1$ along the diagonally embedded subscheme $X\times\{0\}$, where we then remove the strict transform of $X\times X\times\{0\}$. It has the following features:
\begin{enumerate}[(a)]
	\item $X\times\{t\}\hookrightarrow\fr X\big|_t$ identifies with $X\hookrightarrow X\times X$ for $t\neq 0$;
	\item $X\times\{0\}\hookrightarrow \fr X\big|_{0}$ identifies with the embedding of $X$ as the zero section inside the total space of the tangent sheaf $T_X$.
	\item there is a canonical map $\fr X\xrightarrow{\op{pr}_1,\op{pr}_2} X\times X$ which is identity for $t\neq 0$, and the canonical projection $T_X \xrightarrow{p,p} X\times X$ at $t=0$.
\end{enumerate}
Consider $\fr Z:=X\times\mathbb A^1$ as a divisor inside $\fr X$. We define $\widetilde{\cal G}^{(\lambda)}$ as the $\mathbf K_2$-gerbe classifying a $\op{pr}_2^*E$-torsor $\widetilde{\cal P}_E$ over $\fr X$, together with an isomorphism $(\widetilde{\cal P}_E)_T\xrightarrow{\sim} \cal O(\lambda\fr Z)$. Note that $\widetilde{\cal G}^{(\lambda)}$ is equipped with a neutralization over $\fr X-\fr Z$, so we may take the residue along $\op{pr}_1$ to obtain a line bundle $\widetilde{\cal L}^{(\lambda)}$ over $X\times\mathbb A^1$.

Tautologically, $\widetilde{\cal L}^{(\lambda)}\big|_{X\times\{t\}}$ identifies with $\cal L^{(\lambda)}$ for $t\neq 0$. On the other hand, every line bundle on $X\times\mathbb A^1$ canonically identifies with the pullback of a line bundle from $X$. Thus, we obtain an isomorphism $\widetilde{\cal L}^{(\lambda)}\big|_{X\times\{t\}} \xrightarrow{\sim} \widetilde{\cal L}^{(\lambda)}\big|_{X\times\{0\}}$. This shows that $\cal L^{(\lambda)}$ arises from the residue of $(\cal G^{(\lambda)}_{T_X}, \gamma_{T_X})$ along $p : T_X\rightarrow X$, where:
\begin{enumerate}[(a)]
	\item $\cal G_{T_X}^{(\lambda)}$ is the $\mathbf K_2$-gerbe on $T_X$ classifying a $p^*E$-torsor $\cal P_E$, together with an isomorphism $(\cal P_E)_T\xrightarrow{\sim} \cal O(\lambda\{0\})$, where $\{0\}$ denotes the zero section $X\hookrightarrow T_X$; and
	\item $\gamma_{T_X}$ is the tautological neutralization of $\cal G_{T_X}^{(\lambda)}$ over $T_X-\{0\}$.
\end{enumerate}

\smallskip

\textbf{Step 2.} In the above description, suppose we replaced $p: T_X\rightarrow X$ by the trivial line bundle $\mathbb A^1_X\rightarrow X$; then the line bundle arising from taking the residue of the analogously defined pair $(\cal G_{\mathbb A^1_X}^{(\lambda)}, \gamma_{\mathbb A^1_X})$ would identify with $\cal L_+^{(\lambda)}$. Indeed, this follows from comparing the construction of \S\ref{sec-taking-residue} with that of \S\ref{sec-brde-class-torus}(b).

We now explain an alternative way to arrive at $\cal L^{(\lambda)}$ via twisting the line bundle $\mathbb A^1_X\rightarrow X$ in the above construction. Consider the $\mathbb G_m$-action on $\mathbb A^1_X$ by scaling. The pair $(\cal G_{\mathbb A^1_X}^{(\lambda)}, \gamma_{\mathbb A^1_X})$ admits a $\mathbb G_m$-equivariance structure. Hence $L_+^{(\lambda)}$ (the total space of $\cal L_+^{(\lambda)}$) is equipped with a fiberwise $\mathbb G_m$-action. Since $\cal G_{T_X}^{(\lambda)}$ identifies with the twisted product $\cal G^0\widetilde{\boxtimes}\cal G_{\mathbb A^1_X}^{(\lambda)}$ on the total space $T_X^{\times}\overset{\mathbb G_m}{\times}\mathbb A^1_X$ (where $\cal G^0$ denotes the trivial gerbe), we find $L^{(\lambda)}\xrightarrow{\sim} T_X^{\times}\overset{\mathbb G_m}{\times}L_+^{(\lambda)}$. In other words, suppose the fiberwise $\mathbb G_m$-action on $L_+^{(\lambda)}$ is given by some character $q_1(\lambda)\in\mathbb Z$, then there is a canonical isomorphism:
\begin{equation}
\label{eq-torus-compatible-rev}
\cal L^{(\lambda)} \xrightarrow{\sim} \cal T_X^{q_1(\lambda)}\otimes\cal L_+^{(\lambda)}.
\end{equation}

\smallskip

\textbf{Step 3.} We now calculate the character $q_1(\lambda)$.\footnote{Caution: we do not yet know that $q_1(\lambda)$ depends quadratically on $\lambda$.} It suffices to do so at a closed point $x\in X$. The line $L_+^{(\lambda)}\big|_{x\in X}$ admits a simple description as follows (c.f.~\S\ref{sec-brde-class-torus}). Evaluating $E$ at $\mathbb G_{m,x}:=\Spec(k[t,t^{-1}])$, we obtain an exact sequence:
\begin{equation}
\label{eq-brde-at-gm}
0 \rightarrow \mathbf K_2(k[t,t^{-1}]) \rightarrow E(k[t,t^{-1}]) \rightarrow T(k_x[t,t^{-1}]) \rightarrow 0,
\end{equation}
and consequently a $\mathbf K_2(k[t,t^{-1}])$-torsor $E(z)$ at every point $z\in T(k[t,t^{-1}])$. The line $L_+^{(\lambda)}\big|_{x\in X}$ is the $k^{\times}$-torsor induced from $E(t^{\lambda})$ along the residue map $\mathbf K_2(k[t,t^{-1}])\rightarrow k^{\times}$.

To unburden the notation, we again use $L_+^{(\lambda)}$ to denote this line; the $\mathbb G_m(k)$-action on it also admits a simple description. Take $a\in \mathbb G_m(k)$, the action by $a^{q_1(\lambda)}$:
\begin{equation}
\label{eq-auto-mult}
\cdot a^{q_1(\lambda)} : L_+^{(\lambda)}\big|_{x\in X}\xrightarrow{\sim} L_+^{(\lambda)}\big|_{x\in X}
\end{equation}
is given as follows.
\begin{enumerate}[(a)]
	\item Consider the scaling map $k[t,t^{-1}]\rightarrow k[t,t^{-1}]$, $t\leadsto t\cdot a$. It induces a group automorphism $E(k[t,t^{-1}])\xrightarrow{a_*} E(k[t,t^{-1}])$, covering the analogously defined automorphism on $T(k[t,t^{-1}])$. In particular, we obtain a map $a_* : E(t^{\lambda}) \rightarrow E(t^{\lambda}a^{\lambda})$ (\emph{incompatible} with the $\mathbf K_2(k[t,t^{-1}])$-torsor structures.)
	
	After inducing to $k^{\times}$-torsors, we obtain a map \emph{compatible} with the $k^{\times}$-torsor structures:
	$$
	a_* : L_+^{(\lambda)} \rightarrow L_+(t^{\lambda}a^{\lambda}) := E(t^{\lambda}a^{\lambda})_{k^{\times}},
	$$
	since $a_* : \mathbf K_2(k[t,t^{-1}]) \rightarrow \mathbf K_2(k[t,t^{-1}])$ induces the identity on $k^{\times}$.
	
	\item On the other hand, every element in $T(k[t])$ admits a lift to $E(k[t])$, up to an element from $\mathbf K_2(k[t])$ (as follows from $\op R^1p_*\mathbf K_2=0$ for $p : \mathbb A^1_S\rightarrow S$, c.f.~\cite[\S3.1]{BD01}) Hence we have another map $E(t^{\lambda})\rightarrow E(t^{\lambda}a^{\lambda})$, defined as right-multiplying by \emph{any} lift of $a^{\lambda}\in T(k[t])$.
	
	Inducing along $\mathbf K_2(k[t,t^{-1}])\rightarrow k^{\times}$, we again obtain a map of $k^{\times}$-torsors:
	$$
	R_{a^{\lambda}} : L_+^{(\lambda)} \rightarrow L_+(t^{\lambda}a^{\lambda}).
	$$
	Note that this map is independent of the choice of the lift.
	
	\item The automorphism \eqref{eq-auto-mult} identifies with the composition $R_{a^{\lambda}}^{-1}\circ a_*$.
\end{enumerate}

\smallskip

\textbf{Step 4.} We shall now deduce two identities:
\begin{align}
	q_1(2\lambda) - \kappa(\lambda,\lambda) &= 2\cdot q_1(\lambda) \label{eq-quadratic-calc-1} \\
	4\cdot q_1(\lambda) &= q_1(2\lambda) \label{eq-quadratic-calc-2}
\end{align}
The combination of these identities will show that $q_1(\lambda)=\frac{1}{2}\kappa(\lambda,\lambda)=q(\lambda)$. Then the desired isomorphism follows from \eqref{eq-torus-compatible-rev}.

\begin{proof}[Proof of \eqref{eq-quadratic-calc-1}]
This follows from the mutiplicative structure on $E(k[t,t^{-1}])$. Indeed, consider the following commutative diagrams:
$$
\xymatrix@C=2em@R=1.5em{
L_+^{(2\lambda)} \ar[r]^{a_*}\ar[d]^{\cong} & L_+(t^{2\lambda}a^{2\lambda}) \ar[d]^{\cong} \\
L_+^{(\lambda)}\otimes L_+^{(\lambda)} \ar[r]^-{a_*\otimes a_*} & L_+(t^{\lambda}a^{\lambda})\otimes L_+(t^{\lambda}a^{\lambda})
}
\quad
\xymatrix@C=3em@R=1.5em{
L_+^{(2\lambda)} \ar[r]^{a^{\kappa(\lambda,\lambda)}\cdot R_{a^{2\lambda}}}\ar[d]^{\cong} & L_+(t^{2\lambda}a^{2\lambda}) \ar[d]^{\cong} \\
L_+^{(\lambda)}\otimes L_+^{(\lambda)} \ar[r]^-{R_{a^{\lambda}}\otimes R_{a^{\lambda}}} & L_+(t^{\lambda}a^{\lambda})\otimes L_+(t^{\lambda}a^{\lambda})
}
$$
where vertical arrows witness the multiplicativity of $\cal L_+^{(\lambda)}$. The first diagram commutes because $a_*$ defines a group homomorphism on $E(k[t,t^{-1}])$. The second diagram commutes (note the factor $a^{\kappa(\lambda,\lambda)}$) because it calculates the commutator $\op{comm}(a^{\lambda}, t^{\lambda}) \in \mathbf K_2(k[t,t^{-1}])$, whose residue identifies with $a^{\kappa(\lambda,\lambda)}$.

Now, tracing through the horizontal arrows gives rise to the identity $a^{q_1(2\lambda)-\kappa(\lambda,\lambda)} = a^{2\cdot q_1(\lambda)}$ in $k^{\times}$. Since the same calculation is valid for any field extension $k\subset k'$, we obtain \eqref{eq-quadratic-calc-1}.
\end{proof}

\begin{proof}[Proof of \eqref{eq-quadratic-calc-2}]
This follows from the functoriality of $E(k[t,t^{-1}])$ with respect to the double covering map $\op{sq}(t)= t^2$ on $k[t,t^{-1}]$. Note that $\op{sq}_* : E(k[t,t^{-1}])\rightarrow E(k[t,t^{-1}])$ induces a \emph{quadratic} map of $k^{\times}$-torsors\footnote{i.e., the $k^{\times}$-action on the two lines intertwines $k^{\times}\rightarrow k^{\times}$, $a\leadsto a^2$.}:
$$
\op{sq}_* : L_+^{(\lambda)} \rightarrow L^{(2\lambda)}_+.
$$
On the other hand, we have the following commutative diagrams:
$$
\xymatrix@C=1.5em@R=1.5em{
L_+^{(\lambda)} \ar[d]^{\op{sq}_*}\ar[r]^-{(a^2)_*} & L_+(t^{\lambda}a^{2\lambda}) \ar[d]^{\op{sq}_*} \\
L_+^{(2\lambda)} \ar[r]^-{a_*} & L_+(t^{2\lambda}a^{2\lambda})
}
\quad
\xymatrix@C=2em@R=1.5em{
L_+^{(\lambda)} \ar[d]^{\op{sq}_*}\ar[r]^-{R_{a^{2\lambda}}} & L_+(t^{\lambda}a^{2\lambda}) \ar[d]^{\op{sq}_*} \\
L_+^{(2\lambda)} \ar[r]^-{R_{a^{2\lambda}}} & L_+(t^{2\lambda}a^{2\lambda})
}
$$
The first diagram commutes tautologically. The second diagram commutes because $a^{2\lambda}$ belongs to the subgroup $T(k)\hookrightarrow T(k[t,t^{-1}])$, and we may first lift $a^{2\lambda}$ to $E(k)$ so that its image in $E(k[t,t^{-1}])$ is fixed by the automorphism $\op{sq}_*$. Tracing through the horizontal maps and using the quadraticity of vertical maps, we find $a^{4\cdot q_1(\lambda)}=a^{q_1(2\lambda)}$ in $k^{\times}$. Again because the same calculation is valid for any field extension $k\subset k'$, we obtain \eqref{eq-quadratic-calc-2}.
\end{proof}

\qed(Lemma \ref{lem-torus-compatible})

\subsection{Compatibility: general case}
\label{sec-comp-general}

\subsubsection{}
\label{sec-pic-to-form} We now return to the general case of a reductive group $G$. Appealing to the equivalence \eqref{eq-GrT-to-theta}, we obtain a functor:
\begin{equation}
\label{eq-pic-to-form}
\mathbf{Pic}^{\op{fact}}(\op{Gr}_G)\xrightarrow{\op{res}} \mathbf{Pic}^{\op{fact}}(\op{Gr}_T) \xrightarrow{\sim} \theta(\Lambda_{T}) \rightarrow Q(\Lambda_{T}, \mathbb Z).
\end{equation}

\begin{prop}
\label{prop-ss-class}
Suppose $G$ is semisimple and simply connected. Then \eqref{eq-pic-to-form} defines an equivalence: $\mathbf{Pic}^{\op{fact}}(\op{Gr}_G) \xrightarrow{\sim} Q(\Lambda_{T}, \mathbb Z)^W$.
\end{prop}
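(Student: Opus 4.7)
The plan is to combine a reduction to the simple case with Faltings' theorem \cite{Fa03} on the Picard group of the affine Grassmannian at a point, then use the factorization structure to pin down the global picture.

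\textbf{Reduction.} First, write $G$ as a product $\prod_i G_i$ of simple, simply connected factors. Both $\op{Gr}_G$ (as a factorization ind-scheme over $\Ran(X)$) and $Q(\Lambda_T,\mathbb Z)^W$ decompose compatibly as products of the analogous objects for each $G_i$, and the morphism \eqref{eq-pic-to-form} is the corresponding product of morphisms. Hence it suffices to treat the case where $G$ is simple and simply connected.

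\textbf{Input from Faltings and full faithfulness.} For $G$ simple simply connected and each closed $x\in X$, Faltings gives $\op{Pic}(\op{Gr}_{G,x})\cong\mathbb Z$ with a canonical ample generator $\cal L_0$, and $H^0(\op{Gr}_{G,x},\cal O^\times)=k^\times$. Now suppose $\cal L\in\mathbf{Pic}^{\op{fact}}(\op{Gr}_G)$ maps to $0\in Q(\Lambda_T,\mathbb Z)^W$. Its restriction to each $\op{Gr}_{G,x}$ has the form $\cal L_0^{\otimes n_x}$; the factorization isomorphism over disjoint pairs $(x,y)$ forces $n_x$ to be locally constant in $x$, hence constant by connectedness of $X$. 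By restricting via an embedding $\op{Gr}_{T,x}\hookrightarrow\op{Gr}_{G,x}$ and unfolding the definitions of \eqref{eq-pic-to-form} and the $\theta$-datum constructed in Lemma \ref{lem-classify-Gr-comb}, this integer $n$ is read off from the quadratic form $q$ (as a fixed integer multiple of the primitive central charge). Thus $q=0$ forces $n=0$, so $\cal L$ is fiberwise trivial over each $\op{Gr}_{G,x}$. Combined with the rigidity statement $H^0(\op{Gr}_{G,x},\cal O^\times)=k^\times$, the factorization structure, and the Pic-contractibility of $\Ran(X)$ used in Step 1 of the introduction (see \cite{Ta19}), a spreading-out argument over $\Ran(X)$ shows $\cal L$ is canonically trivial as a factorization line bundle. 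Automorphisms of the trivial factorization line bundle reduce in the same way to $k^\times$-valued factorization-compatible functions on $\op{Gr}_G$, which must be the constant $1$.

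\textbf{Essential surjectivity.} For a simple, simply connected $G$, the group $Q(\Lambda_T,\mathbb Z)^W$ is free of rank one, generated by the basic form $q_G$. To realize it, I would construct a factorization line bundle by determinant-of-cohomology: choose a representation $\rho\colon G\to\op{GL}(V)$ of Dynkin index $1$ (which exists for each simple simply connected $G$). For each $S$-point $(x^I,\cal P_G,\alpha)$ of $\op{Gr}_G$, the associated vector bundle $V_{\cal P_G}$ on $S\times X$ is canonically trivialized away from $\Gamma_{x^I}$, so the relative determinant $\det R\Gamma(V_{\cal P_G})\otimes\det R\Gamma(V_{\text{triv}})^{-1}$ yields a line bundle on $S$ with an evident factorization structure. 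Faltings' theorem identifies its fiber over any $\op{Gr}_{G,x}$ with $\cal L_0$, and a direct computation restricting to $\op{Gr}_T$ (via the explicit model $\op{Gr}_{T,\op{comb}}$) shows the associated $W$-invariant quadratic form equals $q_G$. Powers of this line bundle realize arbitrary multiples, hitting all of $Q(\Lambda_T,\mathbb Z)^W$.

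\textbf{Expected difficulty.} The main obstacle is the passage from fiberwise to global (factorization) triviality: one must show that a line bundle on $\op{Gr}_G$ that is trivial on every $\op{Gr}_{G,x}$ is already trivial as a factorization line bundle. This requires the rigidity of $\op{Gr}_{G,x}$ propagated over $\Ran(X)$ and relies essentially on the Pic-contractibility of $\Ran(X)$ (\cite{Ta19}). The secondary point is matching the Dynkin index of $\rho$ with the quadratic form $q_G$ under the restriction to $\op{Gr}_T$, but this is an explicit calculation along coroot loops and does not present conceptual difficulty once the previous step is in place.
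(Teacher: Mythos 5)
Your overall architecture (reduce to the simple case, invoke Faltings at a point, propagate over $\Ran(X)$) matches the paper's, but there is a genuine gap in your essential surjectivity step. You propose to hit the generator of $Q(\Lambda_T,\mathbb Z)^W$ by taking $\det R\Gamma$ of a representation $\rho\colon G\to\op{GL}(V)$ of Dynkin index $1$, asserting that such a $\rho$ exists for every simple simply connected $G$. It does not: index-$1$ representations exist only in types $A$ and $C$. For $E_8$ the minimal index of a nontrivial representation is $60$, for $F_4$ it is $6$, for $G_2$ it is $2$, and the defining representations of $\op{Spin}_n$ have index $2$. So your construction only produces a proper finite-index subgroup of $Q(\Lambda_T,\mathbb Z)^W\cong\mathbb Z$ in general --- indeed, the fact that $\op{Pic}(\op{Gr}_{G,x})$ has a generator $\cal L_{\op{min}}$ that is usually \emph{not} a determinant line bundle is precisely the hard content of Faltings' theorem. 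The paper sidesteps this by first establishing $\mathbf{Pic}^{\op{fact}}(\op{Gr}_G)\xrightarrow{\sim}\mathbf{Pic}^e(\op{Gr}_G)\xrightarrow{\sim}\mathbf{Pic}^e(\op{Gr}_{G,x})$ (via Zhu's computation of the Picard schemes of $\op{Gr}_{G,X^I}$), so that Faltings' abstract generator $\cal L_{\op{min}}$ automatically upgrades to a factorization line bundle; it then computes only the image of $\cal L_{\det}=\cal L_{\op{min}}^{\otimes 2\check h}$, finding $\kappa_{\det}=\op{Kil}=2\check h\,\kappa_{\op{min}}$, and deduces $\cal L_{\op{min}}\mapsto q_{\op{min}}$ by torsion-freeness of the target. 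You should restructure your surjectivity argument along these lines; the determinant computation is still needed, but only to pin down the image of the generator, not to construct it.

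A secondary gap is in your reduction to the simple case: the claim that $\mathbf{Pic}^{\op{fact}}(\op{Gr}_G)$ decomposes as $\prod_j\mathbf{Pic}^{\op{fact}}(\op{Gr}_{G_j})$ is not formal, since a line bundle on a product of ind-schemes need not be an external tensor product, and a rigidified line bundle could a priori carry several factorization structures. The paper needs the vanishing $\op H^i(\op{Gr}_{G,x}^{\le\lambda},\cal O)=0$ for $i\ge 1$ (proved via an affine Bott--Samelson resolution) both to split the Picard group of the product and to establish discreteness of $\mathbf{Pic}^e(\op{Gr}_{G,x}\times\op{Gr}_{G,y})$, which gives uniqueness of the factorization structure compatible with a rigidification. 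Your ``Expected difficulty'' paragraph correctly identifies the fiberwise-to-global passage as the crux, and your appeal to Pic-contractibility of $\Ran(X)$ is in the right spirit, but the product decomposition deserves the same level of scrutiny.
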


In this subsection, we will first prove Proposition \ref{prop-ss-class}, and then use it to deduce the general compatibility result between Gaitsgory functor $\Phi_G$ and the Brylinski-Deligne classification.

\subsubsection{}
We use the notation $\mathbf{Pic}^e(\op{Gr}_G)$ to denote the Picard groupoid of line bundles on $\op{Gr}_G$ together with a rigidification at the unit section $e : \Ran(X)\hookrightarrow\op{Gr}_G$; the notation $\mathbf{Pic}^e(\op{Gr}_{G,X^I})$ carries an analogous meaning. Since factorization line bundles on $\op{Ran}(X)$ are canonically trivial (c.f.~Lemma \ref{lem-classify-Gr-comb}), we have a forgetful functor $\mathbf{Pic}^{\op{fact}}(\op{Gr}_G)\rightarrow\mathbf{Pic}^e(\op{Gr}_G)$.

\subsubsection{}
We first prove Proposition \ref{prop-ss-class} in the case where $G$ is \emph{simple} and simply connected. We note that in this case, the abelian group $Q(\Lambda_T,\mathbb Z)^W$ is isomorphic to $\mathbb Z$, where a generator is given by the \emph{minimal} $W$-invariant quadratic form $q_{\op{min}}$, uniquely specified by the property that $q(\alpha)=1$ for any short coroot $\alpha$.

We fix a point $x\in X$. The calculation of Picard schemes $\mathbf{Pic}^e(\op{Gr}_{G,X^I})$ in \cite[\S3.4]{Zh16} proves that there are isomorphisms:
\begin{equation}
\label{eq-pic-pullback-to-point}
\mathbf{Pic}^{\op{fact}}(\op{Gr}_G) \xrightarrow{\sim} \mathbf{Pic}^e(\op{Gr}_G) \xrightarrow{\sim} \mathbf{Pic}^e(\op{Gr}_{G,x}),
\end{equation}
given by pulling back along $\op{Gr}_{G,x} \hookrightarrow\op{Gr}_G$. On the other hand, the result of G.~Faltings \cite{Fa03} shows that $\mathbf{Pic}^e(\op{Gr}_{G,x})$ is also isomorphic to $\mathbb Z$ (in particular, it is discrete), and the generator of $\mathbf{Pic}^e(\op{Gr}_{G,x})$ is a certain line bundle $\cal L_{\op{min}}$ satisfying the following property:
\begin{enumerate}[(*)]
	\item Let $\cal L_{\det}$ be the determinant line bundle on $\op{Gr}_{G,x}$, whose fiber at an $S$-point $(\cal P_G, \cal P_G\big|_{\overset{\circ}{D}_x}\xrightarrow{\sim}\cal P_G^0)$ is the relative determinant of the lattices $\fr g_{\cal P_G},\fr g_{\cal P_G^0} \subset \fr g(\cal K_x)$. Then there is an isomorphism $(\cal L_{\op{min}})^{\otimes 2\check h} \xrightarrow{\sim} \cal L_{\det}$.
\end{enumerate}

\noindent
In order to show that \eqref{eq-pic-to-form} is an isomorphism onto $Q(\Lambda_T,\mathbb Z)^W$, it suffices to show that for some nonzero integer $d$, the image of $(\cal L_{\op{min}})^{\otimes d}$ (regarded as an element in $\mathbf{Pic}^{\op{fact}}(\op{Gr}_G)$ via \eqref{eq-pic-pullback-to-point}) equals $d\cdot q$. We will prove this statement for $d=2\check h$ by calculating the image of $\cal L_{\det}$.

Note that $\cal L_{\det}$ has a natural factorization structure (c.f.~\cite[\S5.2.1]{GL16}). By tracing through the functors in \eqref{eq-pic-to-form}, we see that its image is the quadratic form $q_{\det}$ whose associated bilinear form $\kappa_{\det}$ equals:
$$
\kappa_{\det}(\lambda,\mu) = \sum_{\check{\alpha}\in\Phi} \langle\lambda,\check{\alpha}\rangle\langle\mu,\check{\alpha}\rangle = \op{Kil}(\lambda,\mu),
$$
where $\op{Kil}$ stands for the Killing form. On the other hand, $\check h$ is defined so that $\op{Kil}=2\check h\cdot\kappa_{\op{min}}$. Thus $q_{\det} = 2\check h\cdot q_{\op{min}}$ as desired.

\subsubsection{} In order to handle the general case, we first note a cohomological vanishing result that will also be useful later. We continue to fix a $k$-point $x\in X$. Recall that for a dominant cocharacter $\lambda\in\Lambda_G^+$, we have the affine Schubert cell $\op{Gr}_{G,x}^{\le\lambda}\hookrightarrow\op{Gr}_{G,x}$ such that $\op{Gr}_{G,x}$ is isomorphic to the infinite union $\underset{\lambda\in\Lambda_T^+}{\op{colim}}\op{Gr}_{G,x}^{\le\lambda}$. When $G$ is semisimple and simply connected, each $\op{Gr}_{G,x}^{\le\lambda}$ is integral.

\begin{lem}
\label{lem-h1-vanishing}
Suppose $G$ is semisimple and simply connected. Then for any $\lambda\in\Lambda_G^+$, we have $\op H^i(\op{Gr}_{G,x}^{\le\lambda},\cal O)=0$ for $i\ge 1$.
\end{lem}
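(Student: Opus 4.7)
The plan is to reduce the vanishing to the corresponding statement on a Bott--Samelson resolution, where it becomes immediate because the resolution is an iterated $\mathbb P^1$-bundle.

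First I would set up the geometry. Since $G$ is semisimple and simply connected and $\lambda\in\Lambda_G^+$, the affine Schubert variety $X_\lambda := \op{Gr}_{G,x}^{\le\lambda}$ is a projective integral variety, equal to the closure of the $\cal L_x^+G$-orbit through $t^\lambda$. Picking a reduced decomposition of a minimal-length element $w_\lambda$ of the affine Weyl group representing $t^\lambda$, one obtains the associated Bott--Samelson variety together with a proper birational map $\pi : Z_{\underline w}\to X_\lambda$ factoring through the inclusion $X_\lambda\hookrightarrow\op{Gr}_{G,x}$.

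Next I would compute the cohomology of $Z_{\underline w}$. By construction $Z_{\underline w}$ is an iterated $\mathbb P^1$-bundle over $\op{Spec}(k)$, so by induction on its length together with the projection formula and the identity $H^\bullet(\mathbb P^1,\cal O)=k$ concentrated in degree $0$, one obtains
\[
H^0(Z_{\underline w},\cal O_{Z_{\underline w}})=k,\qquad H^i(Z_{\underline w},\cal O_{Z_{\underline w}})=0\ \text{for }i\ge 1.
\]

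The main obstacle, and the only non-formal ingredient, is the rational-resolution property: one needs $\pi_*\cal O_{Z_{\underline w}}=\cal O_{X_\lambda}$ and $R^i\pi_*\cal O_{Z_{\underline w}}=0$ for $i\ge 1$. This is where I would invoke the work of Faltings \cite{Fa03} (building on Mathieu, Kumar and Littelmann), which establishes in arbitrary characteristic that affine Schubert varieties for simply connected $G$ are normal and Cohen--Macaulay, are Frobenius split compatibly with $Z_{\underline w}$, and that the Bott--Samelson maps are rational resolutions. Granting this, the Leray spectral sequence for $\pi$ degenerates and yields canonical isomorphisms $H^i(X_\lambda,\cal O)\cong H^i(Z_{\underline w},\cal O_{Z_{\underline w}})$, which combined with the previous paragraph gives the claimed vanishing. (As a sanity check for the $H^0$ part, $X_\lambda$ is projective and integral, so $H^0(X_\lambda,\cal O)=k$, consistent with the computation on $Z_{\underline w}$.)
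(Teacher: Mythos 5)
Your argument is correct and rests on the same two pillars as the paper's proof: the Bott--Samelson variety is an iterated $\mathbb P^1$-bundle and therefore has trivial structure-sheaf cohomology, and the resolution is rational, so this cohomology descends to the Schubert variety. The difference is only in the route. You resolve $\op{Gr}_{G,x}^{\le\lambda}$ directly, via a reduced word for a minimal-length representative $w_\lambda$ in $W^{\op{aff}}$ (do make this choice precise: birationality of $Z_{\underline w}\to\op{Gr}_{G,x}^{\le\lambda}$ requires $w_\lambda$ to be the minimal-length element of the coset in $W^{\op{aff}}/W$ indexing the open Iwahori orbit, so that $\ell(w_\lambda)=\dim\op{Gr}_{G,x}^{\le\lambda}$), and you outsource $\op R\pi_*\cal O_{Z_{\underline w}}\cong\cal O$ to Faltings \cite{Fa03} together with the normality and Frobenius-splitting package it rests on. The paper instead first pulls back along the $G/B$-fibration $\op{Fl}_{G,x}\to\op{Gr}_{G,x}$, using $k\xrightarrow{\sim}\op R\Gamma(G/B,\cal O)$ to replace $\op{Gr}_{G,x}^{\le\lambda}$ by the Iwahori--Schubert variety $\op{Fl}_{G,x}^{\le w}$ for $w$ the \emph{longest} element of the double coset of $\lambda$; this is then resolved by the affine Bott--Samelson variety, and the rationality is obtained by running the finite-dimensional inductive argument of \cite[Theorem 2.2.3]{Br04} rather than by citation. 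Your version is shorter but imports a heavier theorem; the paper's detour through the affine flag variety keeps the nontrivial input at the level of the standard Bott--Samelson induction. Both are valid.
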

\begin{proof}
Let $I$ denote the Iwahori subgroup of $\cal L_x^+G$ and $\op{Fl}_{G,x}:= \cal L_xG/I$ be the affine flag variety. The $I$-orbits of $\op{Fl}_{G,x}$ are parametrized by the affine Weyl group $W^{\op{aff}}$. Let $\op{Fl}_{G,x}^w$ denote the orbit corresponding to $w\in W^{\op{aff}}$ and $\op{Fl}_{G,x}^{\le w}$ its closure. We note that the projection $\op{Fl}_{G,x}\rightarrow\op{Gr}_{G,x}$ is a flat-locally trivial fiber bundle with typical fiber $G/B$. Furthermore, for any $\lambda\in\Lambda_G^+$, there is a Cartesian square:
$$
\xymatrix@C=1.5em@R=1.5em{
	\op{Fl}_{G,x}^{\le w} \ar@{^{(}->}[r]\ar[d] & \op{Fl}_{G,x} \ar[d] \\
	\op{Gr}_{G,x}^{\le\lambda} \ar@{^{(}->}[r] & \op{Gr}_{G,x}
}
$$
where $w$ is the longest element in the double coset of $\lambda$, after we identify $\Lambda_G^+$ with $W\backslash W^{\op{aff}}/W$. Since $k\xrightarrow{\sim}\op R\Gamma(G/B,\cal O)$, we reduce the proof to showing $k\xrightarrow{\sim}\op R\Gamma(\op{Fl}_{G,x}^{\le w}, \cal O)$.

We now make an argument similar to that for finite dimensional Schubert cells. Namely, for each simple (affine) reflection $s\in W^{\op{aff}}$, we let $P_s := I\cup (IsI)$ denote the corresponding minimal parahoric subgroup. Suppose $w = s_1\cdots s_l$ is an reduced expression. Then we have an \emph{affine} Bott-Samelson resolution:
\begin{equation}
\label{eq-aff-resolution}
\widetilde{\op{Fl}}_G^{\le w} := P_{s_1} \overset{I}{\times} P_{s_2} \overset{I}{\times} \cdots \overset{I}{\times} P_{s_l}/I \rightarrow \op{Fl}_G^{\le w},
\end{equation}
where the $I$-superscripts indicate quotients by anti-diagonal actions. Since each $P_s/I$ is isomorphic to $\mathbb P^1$, the scheme $\widetilde{\op{Fl}}_G^{\le w}$ is an iterated $\mathbb P^1$-bundle. Thus, we reduce to showing that $\cal O_{\widetilde{\op{Fl}}_G^{\le w}}$ has vanishing higher direct image along \eqref{eq-aff-resolution}, and this follows from the same proof as the usual Bott-Samelson resolution, c.f.~\cite[Theorem 2.2.3]{Br04}.
\end{proof}

\begin{rem}
Lemma \ref{lem-h1-vanishing} can be seen as an affine version of the Borel-Weil-Bott theorem and is likely to be known, but the authors could not find a reference.
\end{rem}

\subsubsection{} We now prove Proposition \ref{prop-ss-class} in the general case. Suppose $G$ has simple factors $\{G_j\}_{j\in J}$. It suffices to prove that pulling back along the factors $\op{Gr}_{G_j}\hookrightarrow\op{Gr}_G$ defines an equivalence of Picard groupoids:
\begin{equation}
\label{eq-factor-equiv}
\mathbf{Pic}^{\op{fact}}(\op{Gr}_G) \xrightarrow{\sim} \prod_{j\in J}\mathbf{Pic}^{\op{fact}}(\op{Gr}_{G_j}).
\end{equation}
Note that this morphism fits into a commutative diagram of Picard groupoids:
$$
\xymatrix@C=1.5em@R=1.5em{
	\mathbf{Pic}^{\op{fact}}(\op{Gr}_G) \ar[d]^{\eqref{eq-factor-equiv}}\ar[r] & \mathbf{Pic}^e(\op{Gr}_{G}) \ar[r]^-{(b)}\ar[d]^{(c)} & \mathbf{Pic}^e(\op{Gr}_{G,x}) \ar[d]^{(a)} \\
	\prod_{j\in J}\mathbf{Pic}^{\op{fact}}(\op{Gr}_{G_j}) \ar[r]^-{\sim} & \prod_{j\in J}\mathbf{Pic}^e(\op{Gr}_{G_j}) \ar[r]^-{\sim} & \prod_{j\in J}\mathbf{Pic}^e(\op{Gr}_{G_j,x})
}
$$
where the lower row consists of equivalences, c.f.~\eqref{eq-pic-pullback-to-point}. We note that the cohomological vanishing Lemma \ref{lem-h1-vanishing} for $i=1$ implies that $(a)$ is an equivalence.\footnote{Recall: suppose $X,Y\in\mathbf{Sch}_{/k}$ are connected schemes of finite type with base points, and $X$ is integral, projective with $\op H^1(X,\cal O_X)=0$. Then $\mathbf{Pic}^e(X)\times\mathbf{Pic}^e(Y)\xrightarrow{\sim}\mathbf{Pic}^e(X\times Y)$ (see \cite[Exercise III.12.6]{Ha13}).} That (b) is an equivalence follows from \cite[Lemma 3.4.2]{Zh16} and the proof of \cite[Lemma 3.4.3]{Zh16}. Together, these facts imply that $(c)$ is an equivalence.

\subsubsection{}
Finally, we argue that the left square is Cartesian, which would imply that \eqref{eq-factor-equiv} is an equivalence. Concretely, this means that given a rigidified line bundle $\cal L$ over $\op{Gr}_{G}$ (which passes to $\boxtimes_{j\in J} \cal L_j$ over $\prod_{j\in J}\op{Gr}_{G_j}$ via the equivalence (c)), the datum needed to upgrade it to a factorization structure on $\cal L$:
$$
\varphi : \cal L^{(2)}\big|_{X^2-\Delta} \xrightarrow{\sim} \cal L^{(1)}\boxtimes \cal L^{(1)}
$$
is equivalent to that of factorization structures $\varphi_j$ on each $\cal L_j$. We note that the collection $\{\varphi_j\}_{j\in J}$ defines a factorization structure $\boxtimes_{j\in J}\varphi_j$ on $\cal L$ and conversely a factorization structure $\varphi$ on $\cal L$ defines $\varphi_j$ by restriction to the $j$th unit section $X^2\underset{X^2}{\times}\cdots\underset{X^2}{\times}\op{Gr}_{G_j,X^2}\underset{X^2}{\times}\cdots\underset{X^2}{\times}X^2\hookrightarrow\op{Gr}_{G,X^2}$. Thus it remains to show:

\begin{claim}
Any $\cal L\in\mathbf{Pic}^e(\op{Gr}_{G})$ has at most one factorization structure compatible with its rigidification.
\end{claim}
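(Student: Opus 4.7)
The plan is to establish uniqueness by a rigidity argument on the disjoint locus. Suppose $\varphi, \varphi'$ are two factorization structures on $\cal L$ compatible with the given rigidification. I would first form their ratio $f := \varphi' \circ \varphi^{-1}$, which is an automorphism of $\cal L^{(1)}\boxtimes\cal L^{(1)}$ over the disjoint locus $(\op{Gr}_{G,X}\times\op{Gr}_{G,X})\big|_{X^2-\Delta}$, equivalently an element of $\cal O^{\times}$ on this ind-scheme. The compatibility with rigidification translates into the statement that $f$ restricts to $1$ along the unit section $e : X^2-\Delta \hookrightarrow (\op{Gr}_{G,X}\times\op{Gr}_{G,X})\big|_{X^2-\Delta}$. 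The goal then becomes: any such $f$ is identically $1$.

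Since $\op{Gr}_G\times\op{Gr}_G$ is exhausted by products of affine Schubert subschemes, it suffices to prove $f\big|_{Z_{\lambda_1,\lambda_2}} = 1$ for each $\lambda_1,\lambda_2 \in \Lambda_G^+$, where $Z_{\lambda_1,\lambda_2} := (\op{Gr}_{G,X}^{\le\lambda_1}\times\op{Gr}_{G,X}^{\le\lambda_2})\big|_{X^2-\Delta}$. The projection $p : Z_{\lambda_1,\lambda_2} \to X^2 - \Delta$ is proper, and its geometric fibers are products $\op{Gr}_{G,x_1}^{\le\lambda_1}\times\op{Gr}_{G,x_2}^{\le\lambda_2}$. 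Because $G$ is semisimple and simply connected, each Schubert variety $\op{Gr}_{G,x_i}^{\le\lambda_i}$ is a connected reduced (in fact integral) projective scheme, so each fiber satisfies $\op H^0(-,\cal O) = k$.

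By upper semicontinuity and the fact that fiberwise $h^0$ is constantly $1$, the pushforward $p_*\cal O_{Z_{\lambda_1,\lambda_2}}$ is a coherent sheaf of rank one on $X^2-\Delta$; together with the section furnished by the unit, Nakayama's lemma implies that the natural map $\cal O_{X^2-\Delta} \to p_*\cal O_{Z_{\lambda_1,\lambda_2}}$ is an isomorphism. Consequently $f\big|_{Z_{\lambda_1,\lambda_2}}$ is the pullback of some unit $\tilde f \in \cal O^{\times}(X^2-\Delta)$; restricting to the unit section forces $\tilde f = 1$, hence $f\big|_{Z_{\lambda_1,\lambda_2}} = 1$. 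Passing to the colimit over $\lambda_1,\lambda_2$ yields $f\equiv 1$, as desired.

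The main obstacle is not conceptual but technical: to justify cleanly the step $\cal O_{X^2-\Delta}\xrightarrow{\sim} p_*\cal O_{Z_{\lambda_1,\lambda_2}}$ for a proper morphism that is not assumed flat. This is where the presence of the unit section is critical, since it pins down the natural map and allows one to bypass the usual flatness hypothesis of cohomology-and-base-change. The rest of the argument consists of routine bookkeeping reducing the global statement to the Schubert-level statement.
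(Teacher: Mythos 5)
Your overall strategy coincides with the paper's: reduce to the disjoint locus, observe that two factorization structures differ by a unit $f$ on $\op{Gr}_{G,X^2}\big|_{X^2-\Delta}$ that restricts to $1$ on the unit section, and then kill $f$ using the fact that Schubert varieties are connected, reduced and proper. The one step that does not go through as written is your claimed bypass of flatness in the identification $\cal O_{X^2-\Delta}\xrightarrow{\sim}p_*\cal O_{Z_{\lambda_1,\lambda_2}}$: upper semicontinuity of $h^0$ and the cohomology-and-base-change machinery both require $\cal O_{Z_{\lambda_1,\lambda_2}}$ to be flat over the base, and the presence of the unit section does not substitute for this hypothesis --- it only splits the natural map fiberwise \emph{after} one knows that formation of $p_*$ commutes with base change. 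So the ``main obstacle'' you flag is real, and the proposed workaround does not close it.

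The gap is easy to fill, in either of two ways. First, flatness is actually available here: for $G$ semisimple and simply connected the Schubert schemes $\op{Gr}_{G,X}^{\le\lambda}$ are proper and faithfully flat over $X$ (Lemma \ref{lem-ss-good}(a), citing Zhu), hence so is their exterior product over $X^2$, and Grauert's theorem then gives $p_*\cal O_{Z_{\lambda_1,\lambda_2}}\cong\cal O_{X^2-\Delta}$ directly. Second --- and this is what the paper does --- one can avoid the pushforward altogether: $Z_{\lambda_1,\lambda_2}$ is integral and of finite type over $k$, the fibers over $k$-points of $X^2-\Delta$ are dense in it, and on each fiber $\op{Gr}_{G,x}^{\le\lambda_1}\times\op{Gr}_{G,y}^{\le\lambda_2}$ the restriction of $f$ is a global unit on a connected, reduced, proper scheme, hence a constant, which the rigidification pins to $1$ (equivalently, $\mathbf{Pic}^e(\op{Gr}_{G,x}\times\op{Gr}_{G,y})$ is discrete, by Lemma \ref{lem-h1-vanishing}); therefore $f-1$ vanishes on a dense subset of an integral scheme and $f\equiv 1$. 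With either patch your argument is complete and is essentially the paper's.
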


\noindent
Indeed, any two such factorization structures differ by an automorphism $\beta$ of $\cal L^{(2)}\big|_{X^2-\Delta}$ that restricts to identity along the unit section. Since $\op{Gr}_{G,X^2}\big|_{X^2-\Delta}$ is an ind-integral ind-scheme over $X^2-\Delta$, it suffices to show that $\beta$ becomes the identity after restricting to the fibers at $k$-points of $X^2-\Delta$. The latter follows from the discreteness of $\mathbf{Pic}^e(\op{Gr}_{G,x}\times\op{Gr}_{G,y})$, which in turn follows from that of $\mathbf{Pic}^e(\op{Gr}_{G,x})$ and Lemma \ref{lem-h1-vanishing}. \qed(Proposition \ref{prop-ss-class})

\subsubsection{}

For a semisimple and simply connected group $G$, we obtain a map:
$$
Q(\Lambda_{T},\mathbb Z)^{W}\rightarrow\theta(\Lambda_{T})
$$
by first lifting an element of $Q(\Lambda_T,\mathbb Z)^W$ to $\mathbf{Pic}^{\op{fact}}(\op{Gr}_G)$ using the isomorphism of Proposition \ref{prop-ss-class}, and then mapping to $\theta(\Lambda_T)$. By Lemma \ref{lem-torus-compatible}, the above functor identifies with \eqref{eq-ss-theta}.

\subsubsection{} Recall the Picard groupoid $\theta_G(\Lambda_{T})$ of \S\ref{sec-brde}. We will define a functor:
\begin{equation}
\label{eq-class-general}
\mathbf{Pic}^{\op{fact}}(\op{Gr}_G) \rightarrow \theta_G(\Lambda_{T})
\end{equation}
Given $\cal L\in\mathbf{Pic}^{\op{fact}}(\op{Gr}_G)$, we will construct a theta datum $(q, \cal L^{(\lambda)}, c_{\lambda,\mu})$ for $\Lambda_{T}$ as well as an isomorphism $\varphi$ of two corresponding theta data for $\Lambda_{\widetilde T_{\op{der}}}$.

Indeed, $(q,\cal L^{(\lambda)}, c_{\lambda,\mu})$ is the image of $\cal L$ under the first two maps of \eqref{eq-pic-to-form}. On the other hand, $\cal L$ restricts to a factorization line bundle on $\op{Gr}_{\widetilde G_{\op{der}}}$; under the same two maps, we obtain a theta datum $(q\big|_{\Lambda_{\widetilde T_{\op{der}}}}, \widetilde{\cal L}^{(\lambda)}, \widetilde c_{\lambda,\mu})$. By \S\ref{sec-pic-to-form}, this is the theta datum associated to $q\big|_{\Lambda_{\widetilde T_{\op{der}}}}$ under \eqref{eq-ss-theta}. Therefore, we obtain $\varphi$ from the commutativity datum of the diagram:
$$
\xymatrix@C=1.5em@R=1.5em{
	\mathbf{Pic}^{\op{fact}}(\op{Gr}_{G}) \ar[r]^-{\op{res}}\ar[d] & \mathbf{Pic}^{\op{fact}}(\op{Gr}_T) \ar[r]^-{\sim}\ar[d] & \theta(\Lambda_{T}) \ar[d] \\
	\mathbf{Pic}^{\op{fact}}(\op{Gr}_{\widetilde G_{\op{der}}})\ar[r]^-{\op{res}}  & \mathbf{Pic}^{\op{fact}}(\op{Gr}_{\widetilde T_{\op{der}}}) \ar[r]^-{\sim} & \theta(\Lambda_{\widetilde T_{\op{der}}}).
}
$$

\subsubsection{} We now state the main compatibility result, generalizing Lemma \ref{lem-torus-compatible}:

\begin{prop}
\label{prop-full-compatible}
The following diagram of Picard groupoids commutes functorially in $G$:
\begin{equation}
\label{eq-compatible-main}
\xymatrix@C=-1em@R=1.5em{
	\mathbf{CExt}(G, \mathbf K_2) \ar[rr]^-{\Phi_G}\ar[dr]_{\Phi_{\op{BD}}} & & \mathbf{Pic}^{\op{fact}}(\op{Gr}_G) \ar[dl]^{\eqref{eq-class-general}} \\
	& \theta_G(\Lambda_{T})
}
\end{equation}
\end{prop}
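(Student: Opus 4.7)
The strategy is to reduce the commutativity to two compatibilities already established in the excerpt. Both $\Phi_{\op{BD}}$ and the composite $\eqref{eq-class-general} \circ \Phi_G$ factor through restrictions to the torus $T$ and to the simply-connected cover $\widetilde G_{\op{der}}$ (and further to $\widetilde T_{\op{der}}$). The extra input I will invoke is that $\Phi_G$ is functorial under group homomorphisms $H \to G$, i.e.\ the square
$$
\xymatrix@C=2em@R=1.5em{
	\mathbf{CExt}(G, \mathbf K_2) \ar[r]^-{\Phi_G}\ar[d]_-{\op{res}} & \mathbf{Pic}^{\op{fact}}(\op{Gr}_G) \ar[d]^-{\op{res}} \\
	\mathbf{CExt}(H, \mathbf K_2) \ar[r]^-{\Phi_H} & \mathbf{Pic}^{\op{fact}}(\op{Gr}_H)
}
$$
commutes canonically, a property built into the construction of \cite{Ga18}. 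The same functoriality will also yield the ``functorially in $G$'' clause of the proposition.

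Fix $E \in \mathbf{CExt}(G, \mathbf K_2)$. An object of $\theta_G(\Lambda_T)$ consists of a theta datum $(q,\cal L^{(\lambda)},c_{\lambda,\mu})$ for $\Lambda_T$ together with an isomorphism $\varphi$ of two theta data for $\Lambda_{\widetilde T_{\op{der}}}$. For the theta datum part, I apply functoriality to $T \hookrightarrow G$ to identify $\op{res}_{\op{Gr}_T}\Phi_G(E)$ with $\Phi_T(\op{res}_T E)$, and then invoke Lemma \ref{lem-torus-compatible} to match the resulting element of $\theta(\Lambda_T)$ with the one coming from $\op{res}_T E$ via \eqref{eq-brde-class-torus-unshifted}. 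This produces a canonical identification of the underlying theta data on both sides of \eqref{eq-compatible-main}.

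For the isomorphism $\varphi$, note that by construction of both $\Phi_{\op{BD}}$ and \eqref{eq-class-general}, the datum $\varphi$ records how the theta datum restricted to $\Lambda_{\widetilde T_{\op{der}}}$ compares with the theta datum produced from the Weyl-invariant quadratic form $q|_{\Lambda_{\widetilde T_{\op{der}}}}$. On the BD side this comparison comes from the BD equivalence for $\widetilde G_{\op{der}}$ (which supplies \eqref{eq-ss-theta}); on the $\Phi_G$ side it comes from the analogous functoriality square applied to $\widetilde G_{\op{der}} \to G$ together with Proposition \ref{prop-ss-class}. These two $\varphi$'s match precisely because, as observed just before the statement of the proposition, the two candidate maps $Q(\Lambda_{\widetilde T_{\op{der}}},\mathbb Z)^W \to \theta(\Lambda_{\widetilde T_{\op{der}}})$---one via Proposition \ref{prop-ss-class} followed by the torus equivalence \eqref{eq-GrT-to-theta}, the other being \eqref{eq-ss-theta}---coincide.

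The main technical obstacle is coherence: one must check that the identifications assembled in the two previous paragraphs (that of theta data, and that of $\varphi$'s) fit together into a single morphism in the Picard groupoid $\theta_G(\Lambda_T)$, rather than agreeing only componentwise. Concretely, the two squares built from restriction to $T$ and restriction to $\widetilde T_{\op{der}}$ must be coherently glued along the further restriction to $\widetilde T_{\op{der}}$, and this reduces to a diagram chase through the naturalities of the BD classification, of $\Phi_G$, and of the equivalences in Proposition \ref{prop-classify-torus-case}. No new geometric input is required beyond Lemma \ref{lem-torus-compatible} and Proposition \ref{prop-ss-class}; the burden is purely categorical bookkeeping.
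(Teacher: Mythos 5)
Your proposal is correct and follows essentially the same route as the paper: the underlying theta data are matched via Lemma \ref{lem-torus-compatible} (using the functoriality of $\Phi$ under restriction to $T$), and the compatibility of the two $\varphi$'s is obtained from the naturality of that lemma with respect to $\widetilde T_{\op{der}}\rightarrow T$, together with the identification of the two maps $Q(\Lambda_{\widetilde T_{\op{der}}},\mathbb Z)^W\rightarrow\theta(\Lambda_{\widetilde T_{\op{der}}})$ recorded just before the proposition. The paper compresses this into two sentences, but the content---including the coherence bookkeeping you flag---is the same.
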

\begin{proof}
Given a central extension of $G$ by $\mathbf K_2$, we have to construct an isomorphism between two elements of $\theta(\Lambda_{T})$ and check that it respects the isomorphism denoted by $\varphi$. The isomorphism comes from the commutativity datum of Lemma \ref{lem-torus-compatible}, and the required compatibility follows from its functoriality with respect to the map of tori $\widetilde T_{\op{der}}\rightarrow T$.
\end{proof}

\medskip

\section{The main theorem}
\label{sec-main}

This section is devoted to the proof that Gaitsgory's functor $\Phi_G$ is an equivalence of categories. We assume $\tn{char}(k) \nmid N_G$ so that the functor $\Phi_G$ is well-defined.

\subsection{Statement and reduction}

\subsubsection{} Let us first state the main theorem of the paper.

\begin{thm}
\label{thm-main}
Suppose $\tn{char}(k)\nmid N_G$. Then the functor $\Phi_G$ \eqref{eq-ga-functor-text} is an equivalence of Picard groupoids.
\end{thm}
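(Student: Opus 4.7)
The plan is to use the commutative triangle \eqref{eq-compatible-main} (Proposition \ref{prop-full-compatible}) together with the Brylinski--Deligne classification, which asserts that $\Phi_{\op{BD}}$ is an equivalence. Since the triangle exhibits $\Phi_{\op{BD}}$ as the composition of $\Phi_G$ with the functor $\Psi : \mathbf{Pic}^{\op{fact}}(\op{Gr}_G) \to \theta_G(\Lambda_T)$ of \eqref{eq-class-general}, the problem reduces to showing that $\Psi$ itself is an equivalence of Picard groupoids. I would then prove this by case analysis on $G$, following the outline in the introduction.

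The torus case is immediate: when $G = T$, we have $\widetilde T_{\op{der}} = 1$, so the compatibility datum in $\theta_G(\Lambda_T)$ is vacuous and $\theta_T(\Lambda_T) = \theta(\Lambda_T)$, and Proposition \ref{prop-classify-torus-case} then identifies $\Psi$ with the equivalence therein. The semisimple simply connected case is also essentially in hand: when $G = \widetilde G_{\op{der}}$, the forgetful functor $\theta_G(\Lambda_T) \to Q(\Lambda_T, \mathbb Z)^W$ is an equivalence by the definition of $\theta_G$ and \eqref{eq-ss-theta}, so Proposition \ref{prop-ss-class} combined with Lemma \ref{lem-torus-compatible} yields the claim.

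Next comes the case when $G_{\op{der}} = \widetilde G_{\op{der}}$ is simply connected; set $T_1 := G/G_{\op{der}}$. The central extension $1 \to G_{\op{der}} \to G \to T_1 \to 1$ induces a factorization-compatible, \'etale-locally trivial fibration $\op{Gr}_G \to \op{Gr}_{T_1}$ with typical fiber $\op{Gr}_{G_{\op{der}}}$. Given $\cal L \in \mathbf{Pic}^{\op{fact}}(\op{Gr}_G)$, restriction along $\op{Gr}_{G_{\op{der}}} \hookrightarrow \op{Gr}_G$ yields a factorization line bundle classified by Step 2 as a Weyl-invariant $q' \in Q(\Lambda_{T_{\op{der}}}, \mathbb Z)^W$; after subtracting off the pullback to $\op{Gr}_G$ of the universal factorization line bundle realizing this class (which exists by Proposition \ref{prop-ss-class}), the remainder is trivial along each fiber, and so descends to a factorization line bundle on $\op{Gr}_{T_1}$, classified by Step 1 as a theta datum on $\Lambda_{T_1}$. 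The combination is precisely an enhanced theta datum for $G$, matching $\theta_G(\Lambda_T)$; full faithfulness follows because the gerbe of isomorphisms between two factorization line bundles with the same fiberwise class has trivial automorphism groups, which reduces via Lemma \ref{lem-h1-vanishing} to a statement about rigidified Picard groups at a point.

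Finally, for an arbitrary reductive $G$, I would choose a $z$-extension $1 \to Z \to \widetilde G \to G \to 1$ with $\widetilde G_{\op{der}}$ simply connected, and apply $h$-descent of line bundles (in the sense suggested by Gaitsgory) to reduce to the previous step applied to $\widetilde G$; both $\mathbf{CExt}(-, \mathbf K_2)$ and $\mathbf{Pic}^{\op{fact}}(\op{Gr}_{\bullet})$ satisfy the relevant descent along this isogeny, and the two descent conditions match under $\Psi$. The main obstacle will be Step 3: verifying that the fiberwise classification from Step 2 interacts correctly with the factorization structure, and that the descent obstruction along $\op{Gr}_G \to \op{Gr}_{T_1}$ is exactly captured by the compatibility datum $\varphi$ built into $\theta_G(\Lambda_T)$. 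Once this interlocking of fiber, base, and factorization data is established, the remaining cases are largely formal.
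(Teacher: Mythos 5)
Your overall strategy coincides with the paper's: reduce via the triangle \eqref{eq-compatible-main} and the Brylinski--Deligne equivalence to showing that $\Psi = \eqref{eq-class-general}$ is an equivalence; handle the torus and semisimple simply connected cases by Propositions \ref{prop-classify-torus-case} and \ref{prop-ss-class}; treat the case $G_{\op{der}}$ simply connected via the fibration $\fr p : \op{Gr}_G \to \op{Gr}_{T_1}$; and reduce the general case to this one by a $z$-extension and $h$-descent along the simplicial resolution $\widetilde G\times T_2^{\bullet}$. (Minor point: a $z$-extension is not an isogeny---the kernel is a torus---and one needs the result for every term $\widetilde G\times T_2^{\times n}$, not just $\widetilde G$; but all of these have simply connected derived group, so this is harmless.)

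Your Step 3, however, has a concrete gap. You propose to subtract off ``the pullback to $\op{Gr}_G$ of the universal factorization line bundle realizing'' the class $q'\in Q(\Lambda_{T_{\op{der}}},\mathbb Z)^W$, citing Proposition \ref{prop-ss-class} for its existence. That proposition only produces a factorization line bundle on $\op{Gr}_{G_{\op{der}}}$; there is no map $\op{Gr}_G\to\op{Gr}_{G_{\op{der}}}$ to pull back along, and extending such a bundle to $\op{Gr}_G$ is not automatic. The paper circumvents this entirely: both $\mathbf{Pic}^{\op{fact}}(\op{Gr}_G)$ and $\theta_G(\Lambda_T)$ map compatibly to $Q(\Lambda_{T_{\op{der}}},\mathbb Z)^W$, essential surjectivity and fullness of $\Psi$ are already known because $\Phi_{\op{BD}}=\Psi\circ\Phi_G$ is an equivalence, and so it suffices to show that $\Psi$ restricts to an equivalence on the fibers over $0$, i.e.\ that $\mathbf{Pic}^{\op{fact}}_{q_{\op{der}}=0}(\op{Gr}_G)\to\theta(\Lambda_{T_1})$ is an equivalence. (Your subtraction can be repaired by taking $\cal L_0=\Phi_G(E)$ for the extension $E$ corresponding under $\Phi_{\op{BD}}$ to $\Psi(\cal L)$---but this again invokes the triangle, not Proposition \ref{prop-ss-class}.) Beyond that, your assertion that the remainder is ``trivial along each fiber'' is precisely the technical heart you flag as the main obstacle: vanishing of $q_{\op{der}}$ only gives triviality on the fiber over the unit section, since the identification of the other fibers with $\op{Gr}_{G_{\op{der}}}$ is merely \'etale-local and non-canonical; propagating triviality to all fibers requires the computation of the relative Picard sheaf of $\op{Gr}_{G_{\op{der}},X^I}/X^I$, the factorization structure, and Lemma \ref{lem-contamination}, while the descent of fiberwise-trivial bundles to $\op{Gr}_{T_1}$ rests on Lemma \ref{lem-h1-vanishing} and cohomology-and-base-change. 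None of this is supplied in the proposal, so as written it is an accurate outline of the paper's route with the two hardest steps left open.
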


Using the commutativity of \eqref{eq-compatible-main} and the fact that $\Phi_{\op{BD}}$ is an equivalence, we have already obtained some special cases of Theorem \ref{thm-main}:
\begin{enumerate}[(a)]
	\item the case $G=T$ is a torus follows from Proposition \ref{prop-classify-torus-case}, as $\theta_G(\Lambda_{T})$ becomes $\theta(\Lambda_{T})$;
	\item the case $G$ semisimple, simply connected follows from Proposition \ref{prop-ss-class}, as $\theta_G(\Lambda_{T})$ becomes the (discrete) abelian group $Q(\Lambda_{T}, \mathbb Z)^{W}$.
\end{enumerate}

\subsubsection{} We now perform a reduction of Theorem \ref{thm-main} to the case where $G_{\op{der}}$ is simply connected. Choose an exact sequence of groups:
\begin{equation}
\label{eq-z-ext}
1 \rightarrow T_2 \rightarrow\widetilde G \rightarrow G\rightarrow 1,
\end{equation}
where $T_2$ is a torus, and $\widetilde G$ is a reductive group whose derived subgroup is simply connected. The sequence \eqref{eq-z-ext} is called a \emph{$z$-extension}, c.f.~\cite[Proposition 3.1]{MS82}. Consider the simplicial system $\widetilde G\times T_2^{\bullet}$, where the $n$th simplex is given by $\widetilde G\times T_2^{\times n}$ and the boundary maps are multiplications. Since $T_2$ is central in $\widetilde G$, these multiplication maps define morphisms of algebraic groups. As a consequence, we obtain a simplicial system of prestacks $\op{Gr}_{\widetilde G\times T_2^{\bullet}}$ over $\Ran(X)$. Appealing to \cite[Corollary 5.2.7]{Ga18}, the Picard groupoid $\mathbf{Pic}^{\op{fact}}(\op{Gr}_G)$ identifies with the limit of the co-simplicial system $\mathbf{Pic}^{\op{fact}}(\op{Gr}_{\widetilde G\times T_2^{\bullet}})$.

\begin{rem}
The cited result follows from $h$-descent of line bundles in the context of \emph{derived} schemes. A proof is given there using the theory of ind-cogerent sheaves, which has only been developped in the context $\tn{char}(k) = 0$. However, one can avoid it by using \cite[\S4]{HLP14} instead.
\end{rem}

\begin{lem}
The canonical map of Picard groupoids is an equivalence:
$$
\mathbf{CExt}(G, \mathbf{K}_2)\xrightarrow{\sim} \lim\mathbf{CExt}(\widetilde G\times T_2^{\bullet}, \mathbf K_2).
$$
\end{lem}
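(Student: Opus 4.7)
The lemma is a form of descent for central extensions by $\mathbf K_2$. My plan is to first identify the simplicial group $\widetilde G \times T_2^\bullet$ with the \v{C}ech nerve of the $T_2$-torsor $\widetilde G \to G$, and then descend using the Zariski-local triviality of this torsor together with the fact that $\mathbf K_2$ is a Zariski sheaf.

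Since $T_2$ is central in $\widetilde G$, the assignment $(g, t_1, \dots, t_n) \mapsto (g, g t_1, \dots, g t_1 \cdots t_n)$ is a \emph{group} isomorphism $\widetilde G \times T_2^n \simeq \widetilde G \times_G^{n+1} \widetilde G$, and one checks that under it the \v{C}ech face maps translate to group homomorphisms (multiplications of adjacent $T_2$ factors, using centrality to interchange $T_2$ with $\widetilde G$). Hence $\mathbf{CExt}(-, \mathbf K_2)$ yields a well-defined cosimplicial Picard groupoid, and the canonical functor in the lemma is pullback of central extensions along these faces. I would then reinterpret a central extension of a group $H$ by $\mathbf K_2$ as a multiplicative $\mathbf K_2$-torsor on $H$---a $\mathbf K_2$-torsor $\mathcal P$ with an isomorphism $m^*\mathcal P \simeq p_1^*\mathcal P \otimes p_2^*\mathcal P$ on $H \times H$, associative on $H^3$ and unital---so that the lemma becomes a descent statement for multiplicative $\mathbf K_2$-torsors along $\widetilde G \to G$.

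To invert the pullback functor, one uses that since $k$ is algebraically closed and $T_2$ is a split torus, every $T_2$-torsor on any $k$-scheme is Zariski-locally trivial; in particular, there exists a Zariski open cover $\{U_\alpha \to G\}$ equipped with local sections $s_\alpha : U_\alpha \to \widetilde G$. Given a coherent system $(\widetilde E, \phi, \dots)$ in the limit---a central extension $\widetilde E$ of $\widetilde G$, a descent datum $\phi$ on $\widetilde G \times T_2 \simeq \widetilde G \times_G \widetilde G$, and a cocycle condition over $\widetilde G \times T_2^2$---pull back the underlying $\mathbf K_2$-torsor of $\widetilde E$ along each $s_\alpha$ to obtain $\mathbf K_2$-torsors $E_\alpha$ on $U_\alpha$; the datum $\phi$ provides gluing on double overlaps $U_\alpha \cap U_\beta$, the cocycle condition supplies triple-overlap compatibility, and Zariski-sheafiness of $\mathbf K_2$ yields a $\mathbf K_2$-torsor $E$ on $G$. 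The multiplicative structure on $\widetilde E$ descends to $E$ because the multiplication and unit maps on $G$ are base-changes of those on $\widetilde G$, and the associativity datum transports analogously. The main obstacle will be the bookkeeping at the full Picard-groupoid level: one must match the cosimplicial coherence on $\widetilde G \times T_2^\bullet$ with the \v{C}ech coherence on the Zariski cover $\{U_\alpha\}$, and verify that pullback and this inverse construction are mutually quasi-inverse as functors of Picard groupoids, not merely on isomorphism classes.
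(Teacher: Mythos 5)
Your proposal is correct and follows essentially the same route as the paper: both identify $\widetilde G\times T_2^{\bullet}$ with the \v{C}ech nerve of the $T_2$-torsor $\widetilde G\to G$, view central extensions as multiplicative $\mathbf K_2$-torsors (equivalently, homomorphisms to $\op B\mathbf K_2$), and rest on Hilbert 90 for the split torus $T_2$ together with Zariski-sheafiness of $\mathbf K_2$. The paper packages the descent more abstractly---by showing $G\simeq\op{colim}(\widetilde G\times T_2^{\bullet})$ in Zariski sheaves of groups and applying $\op{Hom}(-,\op B\mathbf K_2)$---which dispenses with the cocycle bookkeeping you flag as the main remaining obstacle.
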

\begin{proof}
We argue that the Picard groupoid of (not necessarily central) extensions $\mathbf{Ext}(G,\mathbf K_2)$ maps isomorphically to $\lim\mathbf{Ext}(\widetilde G\times T_2^{\bullet}, \mathbf K_2)$; the result would follow since a $\mathbf K_2$-extension of $G$ is central if and only if its pullback to each $\widetilde G\times T_2^{\bullet}$ is central.

Since $\mathbf{Ext}(G,\mathbf K_2)$ identifies with homomorphisms from $G$ to $\op B\mathbf K_2$, it suffices to show that $G$ identifies with $\op{colim}(\widetilde G\times T_2^{\bullet})$ in the category of Zariski sheaves of groups (in spaces). This in turn follows from:
\begin{enumerate}[(a)]
	\item the forgetful functor from Zariski sheaves of groups to plain Zariski sheaves is conservative and commutes with geometric realizations;
	\item $G$ identifies with $\op{colim}(\widetilde G\times T_2^{\bullet})$ in the category of plain Zariski sheaves, since every $T_2$-torsor is Zariski-locally trivial (Hilbert 90).\qedhere
\end{enumerate}
\end{proof}

In other words, Theorem \ref{thm-main} for $G$ follows from the same result for each $\widetilde G\times T_2^{\bullet}$. In proving Theorem \ref{thm-main}, we may thus assume that $G_{\op{der}}$ is simply connected.

\subsection{Proof of Theorem \ref{thm-main} for $G_{\op{der}}$ simply connected}

\subsubsection{} We now prove Theorem \ref{thm-main} in the case that $G_{\op{der}}$ is simply connected. Let $T_1:=G/G_{\op{der}}$. Then the fiber of $\theta_G(\Lambda_T)\rightarrow Q(\Lambda_{T_{\op{der}}}, \mathbb Z)^W$ identifies with $\theta(\Lambda_{T_1})$. Let $\mathbf{Pic}^{\op{fact}}_{q_{\op{der}}=0}(\op{Gr}_G)$ be the full subgroupoid of $\mathbf{Pic}^{\op{fact}}(\op{Gr}_G)$, consisting of objects whose images vanish under the following composition:
$$
\mathbf{Pic}^{\op{fact}}(\op{Gr}_G) \rightarrow \mathbf{Pic}^{\op{fact}}(\op{Gr}_{G_{\op{der}}}) \xrightarrow{\eqref{eq-pic-to-form}} Q(\Lambda_{T_{\op{der}}},\mathbb Z).
$$

We then have a commutative diagram of Picard groupoids:
$$
\xymatrix@C=1.5em@R=1.5em{
  & & \mathbf{CExt}(G ;\mathbf K_2) \ar@/^1pc/[ddl]^{\Phi_{\op{BD}}}_{\cong}\ar@/_1pc/[ld]_-{\Phi_G} \\
 \mathbf{Pic}^{\op{fact}}_{q_{\op{der}}=0}(\op{Gr}_G) \ar@{^{(}->}[r]\ar[d] & \mathbf{Pic}^{\op{fact}}(\op{Gr}_G) \ar[d]_{\eqref{eq-class-general}} & \\
 \theta(\Lambda_{T_1}) \ar@{^{(}->}[r] & \theta_{G}(\Lambda_T) \ar[r] &  Q(\Lambda_{T_{\op{der}}}, \mathbb Z)^W.
}
$$
Inspecting this diagram, we see that it suffices to show that the first vertical map:
\begin{equation}
\label{eq-class-qder}
\mathbf{Pic}^{\op{fact}}_{q_{\op{der}}=0}(\op{Gr}_G) \rightarrow \theta(\Lambda_{T_1})
\end{equation}
is an equivalence.

\subsubsection{} Consider the projection $\fr p : \op{Gr}_G \rightarrow \op{Gr}_{T_1}$. It defines a pullback functor
\begin{equation}
\label{eq-proj-to-t1}
\fr p^* : \mathbf{Pic}^{\op{fact}}(\op{Gr}_{T_1})\rightarrow \mathbf{Pic}^{\op{fact}}_{q_{\op{der}}=0}(\op{Gr}_G)
\end{equation}
such that the composition:
$$
\mathbf{Pic}^{\op{fact}}(\op{Gr}_{T_1})\xrightarrow{\fr p^*} \mathbf{Pic}^{\op{fact}}_{q_{\op{der}}=0}(\op{Gr}_G) \xrightarrow{\eqref{eq-class-qder}} \theta(\Lambda_{T_1})
$$
canonically identifies with the equivalence \eqref{eq-GrT-to-theta}. It therefore suffices to show that \eqref{eq-proj-to-t1} is an equivalence.

\subsubsection{}
We note that \eqref{eq-proj-to-t1} factors through the full subcategory
	\begin{equation}
	\label{eq-pic-natural}
	\mathbf{Pic}^{\op{fact}}_{\natural}(\op{Gr}_G)\hookrightarrow \mathbf{Pic}^{\op{fact}}_{q_{\op{der}}=0}(\op{Gr}_G)
	\end{equation}
	of factorization line bundles on $\op{Gr}_G$ which are trivial along fibers of $\fr p$ over $k$-points. In the rest of this subsection, we shall show that
\begin{enumerate}[(a)]
	\item the containment \eqref{eq-pic-natural} is an equivalence.
	\item pullback along $\fr p$ defines an equivalence
	\begin{equation}
	\label{eq-pullback-equiv}
	\mathbf{Pic}^{\op{fact}}(\op{Gr}_{T_1})\rightarrow \mathbf{Pic}^{\op{fact}}_{\natural}(\op{Gr}_G).
	\end{equation}
\end{enumerate}
The combination of these two statements will imply Theorem \ref{thm-main}.

\subsubsection{} In order to prove the above statements, we first study the geometric properties of the projection $\fr p$.

\begin{lem}
\label{lem-proj-to-t1}
The map $\fr p$ realizes $\op{Gr}_G$ as an \'etale locally trivial $\op{Gr}_{G_{\op{der}}}$-bundle over $\op{Gr}_{T_1}$.
\end{lem}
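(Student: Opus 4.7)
The map $\fr p$ is defined by change of structure group along the quotient homomorphism $G \twoheadrightarrow T_1$, sending $(x^I, \cal P_G, \alpha)$ to $(x^I, \cal P_G \overset{G}{\times} T_1, \bar\alpha)$ where $\bar\alpha$ is the induced rational trivialization of the $T_1$-bundle. My plan is to establish étale-local triviality in two stages: first, to construct sections of $\fr p$ étale-locally on $\op{Gr}_{T_1}$; second, to identify the fiber over such a section with $\op{Gr}_{G_{\op{der}}}$.

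For the first stage, I will exploit the fact that a maximal torus $T \subset G$ surjects onto $T_1$. Indeed, because $G_{\op{der}}$ is simply connected, one has $G = Z(G)^{\circ}\cdot G_{\op{der}}$; since $Z(G)^{\circ} \subset T$, the composition $T \hookrightarrow G \twoheadrightarrow T_1$ is surjective with kernel the split torus $T \cap G_{\op{der}}$. The resulting commutative triangle
\begin{equation*}
\xymatrix@R=1em{
  \op{Gr}_T \ar[r] \ar[dr] & \op{Gr}_G \ar[d]^{\fr p} \\
  & \op{Gr}_{T_1}
}
\end{equation*}
reduces the problem to showing that the diagonal $\op{Gr}_T \to \op{Gr}_{T_1}$ is étale-surjective. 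Via the uniformization $\op{Gr}_T = \cal LT/\cal L^+T$, this further reduces to the étale-surjectivity of $\cal LT \to \cal LT_1$, which I expect to follow from the fact that torsors under the split-torus kernel on the relative punctured formal disk $\mathring D_{x^I}$ become trivial after an étale cover of the test scheme $S$---ultimately a statement about triviality of $\mathrm{Pic}(\cal O(\mathring D_{x^I}))$ étale-locally on $S$. This is the step I expect to present the main technical obstacle: one must carefully distinguish étale covers of $S$ from those of the more complicated scheme $\mathring D_{x^I}$, as the latter do not always arise by base change from the former.

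For the second stage, fix a lift $(\cal P_G, \alpha)$ of $(\cal P_{T_1}, \alpha_{T_1}) \in \op{Gr}_{T_1}(S)$ produced by the first stage, coming from a $T$-bundle via the inclusion $T \hookrightarrow G$. Any other lift $(\cal P'_G, \alpha')$ yields a torsor under the inner-form group scheme $\cal P_G \overset{G}{\times} G_{\op{der}}$---well-defined thanks to the normality of $G_{\op{der}}$ in $G$---equipped with a canonical rational trivialization obtained by comparing $\alpha$ and $\alpha'$ away from the graphs. Since $\cal P_G$ has been reduced to a $T$-bundle, this inner form trivializes étale-locally on $S$, giving a canonical identification of the fiber with $\op{Gr}_{G_{\op{der}}}$. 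The inverse construction, twisting $\cal P_G$ by an $S$-point of $\op{Gr}_{G_{\op{der}}}$, yields the trivialization $\op{Gr}_G \times_{\op{Gr}_{T_1}} \cal U \cong \cal U \times \op{Gr}_{G_{\op{der}}}$ over any étale cover $\cal U \to \op{Gr}_{T_1}$ admitting a lift to $\op{Gr}_T$, completing the proof.
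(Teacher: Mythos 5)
There is a genuine gap, and it sits exactly where you predicted: your first stage hinges on the claim that the obstruction torsor under the kernel torus on $\mathring D_{x^I}$ dies after an \'etale cover of $S$, i.e.\ on an \'etale-local triviality of $\Pic$ of the ring of functions on the punctured formal disk. You do not prove this, and it is not a safe claim to lean on: an \'etale cover $\widetilde S\rightarrow S$ induces only a very special class of covers of $\mathring D_{x^I}$, which is not cofinal among \'etale covers of that scheme, so $\Pic(\cal O(\mathring D_{x^I}))$ is simply not controlled by \'etale localization on $S$ for a general affine test scheme. The point you are missing --- and the one the paper uses --- is that no cohomological vanishing is needed at all: the surjection $T\twoheadrightarrow T_1$ of split tori has \emph{connected} kernel (the scheme-theoretic intersection $T\cap G_{\op{der}}$ is a maximal torus of $G_{\op{der}}$), so the induced surjection of cocharacter lattices $\Lambda_T\twoheadrightarrow\Lambda_{T_1}$ is a surjection onto a free abelian group and therefore splits. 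Hence $T\rightarrow T_1$ splits, hence $G\rightarrow T_1$ splits, and hence $\cal LG\rightarrow\cal LT_1$ admits a section on the nose. Your reduction to $\cal LT\rightarrow\cal LT_1$ then becomes trivial, with no discussion of torsors on $\mathring D_{x^I}$.

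Your second stage is also more delicate than you let on, and the splitting lets the paper bypass it. The paper lifts a given $S$-point $\gamma$ of $\op{Gr}_{T_1}$ \'etale-locally to an element $\alpha$ of $\cal LT_1$ (using that $\op{Gr}_{T_1}=\cal LT_1/\cal L^+T_1$ as \'etale sheaves), pushes $\alpha$ to $\widetilde\alpha\in\cal LG$ via the splitting, and uses the equivariance of $\fr p$ under $\op{act}_{\widetilde\alpha}$ to translate the fiber over $\gamma$ onto the fiber over the ``neutral point'' $\gamma_0$, which is $\op{Gr}_{G_{\op{der}}}\underset{\Ran(X)}{\times}S$ by inspection. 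In your version, the difference of two lifts is a torsor under the inner form $\cal P_G\overset{G,\op{Ad}}{\times}G_{\op{der}}$, and the assertion that ``since $\cal P_G$ has been reduced to a $T$-bundle, this inner form trivializes \'etale-locally on $S$'' is not correct as stated: a $T$-reduction does not trivialize the conjugation twist (a nontrivial $T$-bundle twists $G_{\op{der}}$ nontrivially). What one actually needs is a trivialization of $\cal P_T$ on the formal neighborhood $D_{x^I}$ of the graphs, \'etale-locally on $S$, together with a Beauville--Laszlo-type argument that the fiber of $\fr p$ only depends on data near the graphs; this can be made to work, but it is additional content your sketch elides, and the translation-by-$\cal LG$ argument renders it unnecessary.
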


In other words, for every affine scheme $S\rightarrow\op{Gr}_{T_1}$, there is an \'etale cover $\widetilde S\rightarrow S$ and an isomorphism $\op{Gr}_G\underset{\op{Gr}_{T_1}}{\times}\widetilde S\xrightarrow{\sim} \op{Gr}_{G_{\op{der}}}\underset{\Ran(X)}{\times}\widetilde S$.

\begin{proof}[Proof of Lemma \ref{lem-proj-to-t1}]
We first show that $G\rightarrow T_1$ splits. Indeed, the maximal (split) torus $T\subset G$ surjects onto $T_1$, so it suffices to show that the kernel $T\cap G_{\op{der}}$ is connected. The latter follows since $T\cap G_{\op{der}}$ is a maximal torus of $G_{\op{der}}$.

Given an $S$-point $S\xrightarrow{\gamma}\op{Gr}_{T_1}$, we denote by $S\xrightarrow{\gamma_0}\op{Gr}_{T_1}$ the ``neutral point'' corresponding to $\gamma$, i.e., the composition $S \xrightarrow{\gamma}\op{Gr}_{T_1}\xrightarrow{\pi}\op{Ran}(X) \hookrightarrow\op{Gr}_{T_1}$. Since $\op{Gr}_{G}\underset{\op{Gr}_{T_1},\gamma_0}{\times}S$ identifies with $\op{Gr}_{\widetilde G_{\op{der}}}\underset{\Ran(X)}{\times}S$, it suffices to produce an isomorphism:
\begin{equation}
\label{eq-noncan-isom}
\op{Gr}_{G} \underset{\op{Gr}_{T_1},\gamma}{\times}\widetilde S \xrightarrow{\sim} \op{Gr}_{G} \underset{\op{Gr}_{T_1},\gamma_0}{\times}\widetilde S
\end{equation}
after passing to some \'etale cover $\widetilde S\rightarrow S$.

We choose $\widetilde S\rightarrow S$ such that the elements $\gamma,\gamma_0\in\op{Maps}_{/\op{Ran}(X)}(\widetilde S, \op{Gr}_{T_1})$ differ by the action of some $\alpha\in\op{Maps}_{/\op{Ran}(X)}(\widetilde S, \cal LT_1)$ (this is possible, for example, by lifting $S\rightarrow\op{Gr}_{T_1}$ to $\widetilde S\rightarrow\cal LT_1$). The above discussion shows that we have a splitting of the canonical projection $\cal LG\rightarrow\cal LT_1$. Hence $\alpha$ can be lifted to an element $\widetilde{\alpha}\in\op{Maps}_{/\op{Ran}(X)}(\widetilde S, \cal LG)$. The equivariance property of $\fr p$ shows that the following diagram commutes:
$$
\xymatrix@R=1.5em{
\op{Gr}_{G} \underset{\op{Ran}(X)}{\times} \widetilde S \ar[r]^{\op{act}_{\widetilde{\alpha}}}\ar[d] & \op{Gr}_{G} \underset{\op{Ran}(X)}{\times} \widetilde S \ar[d] \\
\op{Gr}_{T_1} \underset{\op{Ran}(X)}{\times} \widetilde S \ar[r]^{\op{act}_{\alpha}} & \op{Gr}_{T_1} \underset{\op{Ran}(X)}{\times} \widetilde S
}
$$
Since $\op{act}_{\alpha}$ transforms the section $\gamma : \widetilde S\rightarrow\op{Gr}_{T_1}\underset{\op{Ran}(X)}{\times}\widetilde S$ to $\gamma_0$, we obtain the required isomorphism \eqref{eq-noncan-isom} as $\op{act}_{\widetilde{\alpha}}\underset{\op{act}_{\alpha}}{\times}\op{id}_{\widetilde S}$.
\end{proof}

\subsubsection{Proof of $(a)$} We now show that every $\cal L\in\mathbf{Pic}^{\op{fact}}_{q_{\op{der}}=0}(\op{Gr}_G)$ is fiberwise trivial along the projection $\fr p: \op{Gr}_G \rightarrow\op{Gr}_{T_1}$. Since the question concerns only points on $\op{Gr}_{T_1}$, it suffices to show that the base change of $\cal L$ to the subscheme $X^{(\lambda_1,\cdots,\lambda_{|I|})}\hookrightarrow\op{Gr}_{T_1, X^I}$\footnote{Recall that for an $I$-family of co-characters $\lambda^{(I)}=(\lambda_1,\cdots,\lambda_{|I|})$, there is a closed immersion $X^I\hookrightarrow\op{Gr}_{T_1,X^I}$ whose image we call $X^{(\lambda_1,\cdots,\lambda_{|I|})}$.} is fiberwise trivial.

We write $\underline{\mathbf P}^{(\lambda^I)}$ for the \'etale sheaf of relative Picard group of $\op{Gr}_{G,X^I}\underset{\op{Gr}_{T_1,X^I}}{\times} X^{(\lambda_1,\cdots,\lambda_{|I|})}$ over $X^{(\lambda_1,\cdots,\lambda_{|I|})}$, i.e., it associates to every \'etale map $V\rightarrow X^{(\lambda_1,\cdots,\lambda_{|I|})}$ the abelian group $\mathbf{Pic}(\op{Gr}_{G,X^n}\underset{\op{Gr}_{T_1,X^n}}{\times}V)/\mathbf{Pic}(V)$. Thus $\cal L$ defines a global section $l^{(\lambda^I)}$ of $\underline{\mathbf P}^{(\lambda^I)}$ for every $n$-tuple $\lambda^I$. The goal is to show that all $l^{(\lambda^I)}$ vanish.

\subsubsection{} Recall the computation of the \'etale sheaf of relative Picard groups $\underline{\mathbf{Pic}}(\op{Gr}_{G_{\op{der}}, X^I}/X^I)$ in \cite[\S3.4]{Zh16}. It fits into an exact sequence of sheaves of abelian groups over $X^I$:
$$
0 \rightarrow \underline{\mathbf{Pic}}(\op{Gr}_{G_{\op{der}}, X^I}/X^I) \rightarrow \boxtimes_{i\in I} \underline A_X \rightarrow \bigoplus_{|J|=|I|-1} (\Delta_{I\twoheadrightarrow J})_*\boxtimes_{j\in J}\underline A_X.
$$
Here, $A$ denotes the abelian group $\mathbb Z^{\times\op{rank}( G_{\op{der}})}$, and $\underline A_X$ is its associated constant sheaf of groups over $X$. Lemma \ref{lem-proj-to-t1} shows that the sheaf $\underline{\mathbf P}^{(\lambda^I)}$ is \'etale locally isomorphic to $\underline{\mathbf{Pic}}(\op{Gr}_{G_{\op{der}},X^I}/X^I)$ under the identification $X^{(\lambda_1,\cdots,\lambda_{|I|})}\xrightarrow{\sim} X^I$. We note a simple Lemma:

\begin{lem}
\label{lem-contamination}
Let $Y$ be a connected, Noetherian scheme and $\cal F$ be an \'etale sheaf on $Y$. Suppose furthermore that $\cal F$ is \'etale locally isomorphic to a subsheaf of a constant sheaf. Then a section $s\in\Gamma(Y, \cal F)$ vanishes if and only if it does so over some \'etale open $V\rightarrow Y$. 
\end{lem}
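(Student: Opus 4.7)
My plan for Lemma \ref{lem-contamination} is to reduce it to a purely topological statement: that the vanishing locus of $s$ is both open and closed in $Y$. Combined with the connectedness of $Y$ and the non-emptiness of this locus (ensured by the hypothesis that $s$ vanishes over a non-empty étale open $V\to Y$), this will force the locus to equal $Y$, and hence $s=0$.

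Concretely, I would let $Z\subseteq Y$ be the largest Zariski open on which $s|_Z$ vanishes; equivalently, it is the complement of the support of $s$. Openness of $Z$ is automatic, and non-emptiness follows because $V\to Y$ has non-empty open image $U\subseteq Y$ (étale maps being open), and the sheaf property applied to the étale cover $V\to U$ gives $s|_U=0$, so $U\subseteq Z$. The heart of the proof is showing that $Z$ is also closed. For this, I would pass to an étale cover $\{U_\alpha\to Y\}_\alpha$ along which $\cal F|_{U_\alpha}$ embeds as a subsheaf of a constant sheaf $\underline{A_\alpha}$. Under the embedding, $s|_{U_\alpha}$ becomes a section of $\underline{A_\alpha}$, which on the Noetherian scheme $U_\alpha$ is the same datum as a locally constant function $|U_\alpha|\to A_\alpha$; its zero locus is a union of connected components of $U_\alpha$, hence clopen. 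Stalk-wise injectivity of the subsheaf inclusion identifies this zero locus with the preimage of $Z$ under $U_\alpha\to Y$, so $Z\cap U_\alpha$ is closed in $U_\alpha$.

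To finish, I would descend closedness from $\coprod_\alpha U_\alpha$ to $Y$: an étale surjection is open and surjective on underlying topological spaces, so a subset of $Y$ is closed if and only if its preimage in $\coprod_\alpha U_\alpha$ is closed. Applying this to $Z$ yields the required closedness, and the argument concludes as sketched in the first paragraph. There is no serious obstacle; the only ingredient beyond elementary point-set topology is the standard fact that sections of a constant étale sheaf on a Noetherian scheme are locally constant functions on the underlying space, consistent with the authors' description of this as ``a simple Lemma.''
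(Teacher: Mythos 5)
Your proof is correct, and it takes a genuinely different route from the paper's. You show directly that the non-support $Z$ of $s$ is clopen: openness is automatic, and closedness is checked after pulling back along an \'etale cover $\{U_\alpha\to Y\}$ on which $\cal F$ embeds in a constant sheaf, where (by stalkwise injectivity) the preimage of $Z$ becomes the zero locus of a locally constant function and is hence clopen; closedness then descends along the open surjection $\coprod_\alpha U_\alpha\to Y$, and connectedness finishes the argument. The paper instead extracts a \emph{finite} cover by connected \'etale opens $V_i\to Y$ (using Noetherian-ness for quasi-compactness) and runs an induction on the number of opens, propagating the vanishing from one image $U_i$ to the connected components of the union of the remaining $U_j$ via the connectedness of $Y$. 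Both arguments hinge on the same key observation --- on a connected scheme where $\cal F$ embeds in a constant sheaf, a section vanishing at one point vanishes identically --- but your version dispenses with the induction and with any finiteness of the cover, so the Noetherian hypothesis is not actually needed (locally constant functions to a discrete set have clopen fibers on any topological space). Two minor points of hygiene: the phrase ``$Z\cap U_\alpha$'' should be ``the preimage of $Z$ in $U_\alpha$,'' since the $U_\alpha$ are \'etale over $Y$ rather than open subsets; and, as in the paper's proof, one must read the hypothesis as requiring $V$ to be non-empty, without which the statement is false for trivial reasons.
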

\begin{proof}
One can pick finitely many \'etale maps $V_i\rightarrow Y$ ($i\in I$) so that:
\begin{enumerate}[(a)]
	\item each $V_i$ is connected;
	\item $\cal F\big|_{V_i}$ is isomorphic to a subsheaf of a constant sheaf;
	\item the images $U_i$ of $V_i$ collectively cover $Y$.
\end{enumerate}
We induct on the cardinality of $I$ over all connected, Noetherian schemes admitting such a cover; the base case $I=\emptyset$ is trivial. The image $U$ of $V\rightarrow Y$ must intersect some $U_i$. The condition (b) implies that the restriction $s_i\in\Gamma(U_i,\cal F)$ vanishes. Now, let $\overset{\circ}{Y}:=\bigcup_{j\neq i}U_i$. It is \emph{not} necessarily connected. However, the fact that $Y$ is connected shows that $U_i$ intersects every connected component of $\overset{\circ}{Y}$. We apply the induction hypothesis to each connected component of $\overset{\circ}{Y}$ to conclude that $s$ vanishes.
\end{proof}

\subsubsection{} Our proof that each $l^{(\lambda^I)}$ vanishes now proceeds as follows:

\smallskip

\textbf{Step 1:} $l^{(0)}=0$. Indeed, since line bundles on $\op{Gr}_{G_{\op{der}}, X}$ are classified by the quadratic form $q_{\op{der}}$, we see that $\cal L$ is trivialized when pulled back along $\op{Gr}_{G_{\op{der}}, X}\rightarrow\op{Gr}_{G,X}$. On the other hand, $\op{Gr}_{G_{\op{der}}, X}$ appears as the fiber of $\fr p$ along the unit map $X\hookrightarrow\op{Gr}_{T_1}$. Hence $l^{(0)}=0$.

\smallskip

\textbf{Step 2:} $l^{(\lambda)}=0$ for all $\lambda\in\Lambda_{T_1}$. Consider the section $l^{(\lambda,-\lambda)}$ of $\underline{\mathbf P}^{(\lambda,-\lambda)}$. It is represented by some line bundle $\cal L^{(\lambda,-\lambda)}$ over $\op{Gr}_{G,X^2}\underset{\op{Gr}_{T_1,X^2}}{\times}X^{(\lambda,-\lambda)}$. We know from Step 1 that the restriction of $\cal L^{(\lambda,-\lambda)}$ to the diagonal comes from the base $X^{(0)}\hookrightarrow X^{(\lambda,-\lambda)}$. Hence, over an \'etale neighborhood of $X^{(0)}$, the section $l^{(\lambda,-\lambda)}$ has to vanish by the identification of $\underline{\mathbf P}^{(\lambda,-\lambda)}$ with $\underline{\mathbf{Pic}}(\op{Gr}_{G_{\op{der}}, X^2}/X^2)$. We then apply Lemma \ref{lem-contamination} to conclude that $l^{(\lambda,-\lambda)}$ vanishes.

Now, under the identification of $\underline{\mathbf P}^{(\lambda,-\lambda)}$ with $\underline{\mathbf P}^{(\lambda)}\boxtimes\underline{\mathbf P}^{(-\lambda)}$ away from the diagonal, the section $l^{(\lambda,-\lambda)}$ passes to $l^{(\lambda)}\boxtimes l^{(-\lambda)}$. The fact that $l^{(\lambda,-\lambda)}=0$ now implies that $l^{(\lambda)}$ (and $l^{(-\lambda)}$) vanishes.

\smallskip

\textbf{Step 3:} $l^{(\lambda^I)}=0$ for all $I$-tuple $\lambda^I$. When the cardinality of $I$ is at least $2$, we may use the factorization property of $l^{(\lambda^I)}$ to see that $l^{(\lambda^I)}$ vanishes away from the union of the diagonals in $X^{(\lambda_1,\cdots,\lambda_{|I|})}$. Hence by Lemma \ref{lem-contamination} again we have $l^{(\lambda^I)}=0$.

This finishes the proof that \eqref{eq-pic-natural} is an equivalence.

\subsubsection{Proof of $(b)$} We first recall some standard results.

\begin{lem}
\label{lem-ss-good}
Suppose $\widetilde G$ is semisimple and simply connected. Then the morphism $\op{Gr}_{\widetilde G}\rightarrow\op{Ran}(X)$ has the property that for every affine scheme $S\rightarrow\op{Ran}(X)$, we have a presentation
$$
\op{Gr}_{\widetilde G} \underset{\op{Ran}(X)}{\times} S \xrightarrow{\sim} \underset{i}{\op{colim}}\,Y_i
$$
where each $Y_i$ is a scheme of finite type over $S$, satisfying:
\begin{enumerate}[(a)]
	\item $Y_i$ is proper and faithfully flat over $S$;
	\item The fiber $(Y_i)_s$ at every $k$-point $s\in S$ is connected and $\op H^1((Y_i)_s, \cal O)\cong 0$.
\end{enumerate}
\end{lem}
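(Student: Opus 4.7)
The first move is to reduce to the case where $S$ lives over a product $X^I$. By the definition of $\op{Ran}(X)$, a map from the affine scheme $S$ to $\op{Ran}(X)$ factors through $X^I\to\op{Ran}(X)$ for some finite nonempty $I$, so it suffices to construct the required presentation of $\op{Gr}_{\widetilde G,X^I}\underset{X^I}{\times}S$. The natural candidates for the $Y_i$ are the base changes $Y_{\lambda^I}:=\op{Gr}_{\widetilde G,X^I}^{\le\lambda^I}\underset{X^I}{\times}S$ of the global (Beilinson--Drinfeld) affine Schubert schemes, indexed by $I$-tuples $\lambda^I=(\lambda_i)_{i\in I}$ of dominant coweights ordered by the partial order $\lambda^I\le\mu^I$ whenever each $\lambda_i\le\mu_i$. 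Exhaustion of $\op{Gr}_{\widetilde G,X^I}$ by these closed sub-ind-schemes (see \cite{Zh16}) implies $\op{Gr}_{\widetilde G,X^I}\underset{X^I}{\times}S\xrightarrow{\sim}\underset{\lambda^I}{\op{colim}}\,Y_{\lambda^I}$.

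Next I would verify conditions $(a)$ and $(b)$ for each $Y_{\lambda^I}$. Properness of $\op{Gr}_{\widetilde G,X^I}^{\le\lambda^I}$ over $X^I$ is standard (the global Schubert schemes are closed sub-ind-schemes inside a proper one, concretely the global Schubert variety for a sufficiently large dominant $\lambda$); it descends to $Y_{\lambda^I}$ by base change. Flatness of $\op{Gr}_{\widetilde G,X^I}^{\le\lambda^I}\to X^I$ is likewise known (see, e.g., \cite{Zh16}); it relies on the fact that along the stratification of $X^I$ by collisions of coordinates, the fibers have constant dimension equal to $\sum_i\langle 2\check\rho,\lambda_i\rangle$, so one deduces flatness from miracle flatness. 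Flatness and faithful flatness then pass to $Y_{\lambda^I}\to S$.

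For condition $(b)$, the fiber $(Y_{\lambda^I})_s$ at a $k$-point $s\in S$ agrees with the fiber of $\op{Gr}_{\widetilde G,X^I}^{\le\lambda^I}$ at the image of $s$ in $X^I$. If $\{x^{(i)}(s)\}_{i\in I}$ have distinct values $y_1,\dots,y_r\in X$ with preimages $I_1,\dots,I_r$, the factorization property of $\op{Gr}_{\widetilde G}$ (applied after restricting to the $\op{disj}$-locus of the relevant strata, and using the fact that Schubert varieties factorize consistently via convolution on collided strata, c.f.~\cite{Zh16}) identifies this fiber with the product $\prod_{j=1}^r \op{Gr}_{\widetilde G,y_j}^{\le\mu_j}$, where $\mu_j=\sum_{i\in I_j}\lambda_i$. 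Since $\widetilde G$ is semisimple and simply connected, each affine Schubert variety $\op{Gr}_{\widetilde G,y_j}^{\le\mu_j}$ is integral, and Lemma \ref{lem-h1-vanishing} gives $\op H^i(\op{Gr}_{\widetilde G,y_j}^{\le\mu_j},\cal O)=0$ for $i\ge 1$, so in particular $\op H^0=k$. By the Künneth formula on integral proper $k$-schemes, the product is then connected with $\op H^1(\text{product},\cal O)=0$, establishing $(b)$.

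The one step I expect to require the most care is the identification of fibers of the global Schubert schemes on collided strata with products of local affine Schubert varieties, since this requires using the factorization structure together with a consistency statement for the Schubert filtration (namely, that under the factorization isomorphism $\op{Gr}_{\widetilde G,X^I}|_{\op{disj}}\cong \prod_i\op{Gr}_{\widetilde G,X}|_{\op{disj}}$, the Schubert subscheme $\op{Gr}_{\widetilde G,X^I}^{\le\lambda^I}$ corresponds to $\prod_i\op{Gr}_{\widetilde G,X}^{\le\lambda_i}$, and that the diagonal specialization computes convolution Schubert varieties). Once this compatibility is extracted from \cite{Zh16}, the rest of the argument is formal.
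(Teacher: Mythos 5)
Your proposal is correct and follows essentially the same route as the paper: reduce to $X^I$, exhaust by the global Schubert schemes $\op{Gr}_{\widetilde G,X^I}^{\le\underline{\lambda}}$, quote properness and flatness from Zhu's work (the paper cites \cite[\S1.2]{Zh09} rather than \cite{Zh16} for flatness), and deduce property (b) from Lemma \ref{lem-h1-vanishing}. The only difference is that you spell out the identification of fibers over collided points with products of local Schubert varieties and the ensuing K\"unneth argument, a detail the paper compresses into the phrase ``a special case of Lemma \ref{lem-h1-vanishing}.''
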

\begin{proof}
Since each $S\rightarrow\op{Ran}(X)$ factors through some $X^I$, it suffices to produce such a presentation for $\op{Gr}_{\widetilde G,X^I}$. For each $I$-tuple $\underline{\lambda}$ of elements of $\Lambda_G^+$, we may consider the Schubert variety $\op{Gr}_{\widetilde G,X^I}^{\le\underline{\lambda}}$ which is proper, surjective over $X^I$. The flatness is proved in \cite[\S1.2]{Zh09} for $I=\{1,2\}$ and the general case is similar. The property (b) of its fibers is a special case of Lemma \ref{lem-h1-vanishing}.
\end{proof}

\begin{rem}
Lemma \ref{lem-ss-good}(b) fails for non-semisimple groups, since $\op{Gr}_G$ may not be ind-reduced. We do \emph{not} know whether the flatness in part (a) holds more generally.
\end{rem}

\subsubsection{}
\label{sec-pic-push-sch} Suppose $p:X\rightarrow Y$ is a morphism of finite type schemes over $k$\footnote{Recall that $k$ is assumed to be algebraically closed.} such that
\begin{enumerate}[(a)]
	\item $p$ is proper and faithfully flat;
	\item its fiber $X_y$ at every $k$-point $y\in Y$ is connected and $\op H^1(X_y, \cal O)=0$.
\end{enumerate}

\begin{lem}
\label{lem-line-bundle-push}
Let $\cal L$ be a line bundle on $X$. Under the above hypotheses on $p:X\rightarrow Y$, the following are equivalent:
\begin{enumerate}[(a)]
	\item $\cal L$ is trivial along the fibers of $p$;
	\item $p_*\cal L$ is a line bundle over $Y$, and the canonical map $p^*p_*\cal L\rightarrow\cal L$ is an isomorphism.
\end{enumerate}
\end{lem}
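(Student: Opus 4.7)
The plan is to prove the equivalence by treating the two directions separately: $(b)\Rightarrow(a)$ is immediate, while $(a)\Rightarrow(b)$ is the substantive direction and will be handled via Grauert's theorem on cohomology and base change. For the easy implication: if $p_*\cal L$ is a line bundle $M$ on $Y$ and the canonical map $p^*M \xrightarrow{\sim} \cal L$ is an isomorphism, then $\cal L\big|_{X_y}$ is the pullback of the $1$-dimensional vector space $M\big|_y$ along $X_y \to \Spec(k(y))$, hence trivial.

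For $(a)\Rightarrow(b)$, I would first use fiberwise triviality of $\cal L$ together with the hypotheses on $p$ to compute that, for every $k$-point $y \in Y$,
\[
\dim H^0(X_y,\cal L\big|_{X_y}) = 1, \qquad \dim H^1(X_y,\cal L\big|_{X_y}) = 0,
\]
where the first identity uses connectedness and properness of $X_y$, and the second uses $H^1(X_y,\cal O_{X_y})=0$. Since $Y$ is Jacobson (being of finite type over the algebraically closed field $k$), its closed points are dense and coincide with the $k$-points. Combined with upper semicontinuity of $y\mapsto \dim H^i(X_y,\cal L_y)$, this forces these two dimensions to be constant on all of $Y$. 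Grauert's theorem then yields that $p_*\cal L$ is a line bundle on $Y$ and that its formation commutes with arbitrary base change.

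It remains to verify that the canonical map $p^*p_*\cal L \to \cal L$ is an isomorphism of line bundles on $X$. By Nakayama, it suffices to check this fiberwise over closed points $y$, where it identifies with the evaluation map
\[
H^0(X_y,\cal L\big|_{X_y}) \otimes_k \cal O_{X_y} \to \cal L\big|_{X_y}.
\]
This is an isomorphism because $\cal L\big|_{X_y}$ is trivial, so any nonzero global section is nowhere vanishing and thus generates the line bundle. The main subtle point in the proof is the passage from fiberwise triviality at $k$-points to the constancy of fiber cohomology on all of $Y$; this is handled by combining upper semicontinuity with the Jacobson property of $Y$, after which the rest is a routine application of Grauert and Nakayama.
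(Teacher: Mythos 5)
Your overall strategy (compute the fiberwise cohomology, then invoke base-change theory to get local freeness of $p_*\cal L$ and conclude by Nakayama) is the same as the one used in the paper, and the easy direction $(b)\Rightarrow(a)$ and the final Nakayama step are fine. However, there is a genuine gap in the central step: Grauert's theorem requires the base $Y$ to be \emph{reduced} (integral, in the usual formulation), and this hypothesis is neither assumed in the statement of the lemma nor available in the intended application. The lemma is applied, via \S\ref{sec-pic-push-indsch}, to the projection $\op{Gr}_G\rightarrow\op{Gr}_{T_1}$ after base change to the schemes presenting $\op{Gr}_{T_1}$; since $T_1$ is a torus, $\op{Gr}_{T_1}$ is \emph{not} ind-reduced (cf.\ the remark after Lemma \ref{lem-ss-good}), so the relevant $Y$ may carry nilpotents and Grauert's theorem simply does not apply. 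Your auxiliary argument propagating the constancy of $h^i$ from closed points to all points via upper semicontinuity and the Jacobson property is correct, but it is only needed to feed Grauert's hypothesis, and it cannot repair the missing reducedness.

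The fix, which is what the paper does, is to replace Grauert by the cohomology-and-base-change criterion that holds over an arbitrary Noetherian base (Hartshorne III.12.11, or \cite[28.1.6]{Va}): since $\op H^1(X_y,\cal L|_{X_y})=0$ for every closed point $y$, the comparison map $\op R^1p_*\cal L\big|_y\rightarrow\op H^1(X_y,\cal L|_{X_y})$ is trivially surjective, hence an isomorphism, so $\op R^1p_*\cal L$ is a coherent sheaf with vanishing fibers at closed points and therefore vanishes; this in turn makes $p_*\cal L\big|_y\rightarrow\op H^0(X_y,\cal L|_{X_y})$ surjective, and a second application of the criterion gives that $p_*\cal L$ is locally free of rank $h^0(X_y,\cal L|_{X_y})=h^0(X_y,\cal O_{X_y})=1$. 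Only closed points are needed, so the Jacobson detour also becomes unnecessary. (A minor shared caveat: the identity $h^0(X_y,\cal O_{X_y})=1$ uses that the fibers are not just connected and proper but have $\op H^0=k$, e.g.\ are geometrically reduced; this holds in the application, where the fibers are integral Schubert varieties.)
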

\begin{proof}
We use the formulation of the ``cohomology and base change'' theorem in \cite[28.1.6]{Va}. The fiberwise triviality of $\cal L$, together with the vanishing of $\op H^1(X_y, \cal O_{X_y})$, shows that the canonical map:
\begin{equation}
\label{eq-coh-base-change}
\op R^1p_*\cal L\big|_y \rightarrow \op H^1(X_y, \cal L\big|_{X_y})
\end{equation}
is surjective, for any $k$-point $y\in Y$. Hence part (i) of \emph{loc.cit.} applies and we see that that \eqref{eq-coh-base-change} is an isomorphism. Since $\op R^1p_*\cal L$ is coherent, it must vanish. In particular, part (ii) of \emph{loc.cit.} applies and shows that the canonical map $p_*\cal L\big|_y\rightarrow \op H^0(X_y,\cal L\big|_{X_y})$ is surjective. Another application of part (i) then shows that $p_*\cal L$ is locally free near $y$ of rank $h^0(X_y, \cal L\big|_{X_y})=h^0(X_y, \cal O)=1$, i.e., it is a line bundle. The isomorphism $p^*p_*\cal L\xrightarrow{\sim}\cal L$ is then obvious.
\end{proof}

\subsubsection{}
\label{sec-pic-push-indsch} Suppose $p: \cal X\rightarrow Y$ is ind-schematic morphism, represented by morphisms $p_i: X_i\rightarrow Y$ of schemes satisfying the hypothesis of \S\ref{sec-pic-push-sch}. Then $p^* : \mathbf{Pic}(Y) \rightarrow \mathbf{Pic}(\cal X)$ has a partially defined right adjoint:
$$
p_*\cal L := \underset{i}{\op{lim}}\,(p_i)_*\cal L_i,\quad\text{while representing $\cal L$ by the inverse system $\cal L_i\in\mathbf{Pic}(X_i)$}
$$
which is well defined on the full subcategory of $\mathbf{Pic}(\cal X)$ consisting of line bundles which are trivial along the fibers of $p$, and we furthermore have an isomorphism $p^*p_*\cal L\xrightarrow{\sim}\cal L$. For any line bundle $\cal M$ from the base $Y$, it is also clear that $\cal M\xrightarrow{\sim} p_*p^*\cal M$. Hence $p^*$ defines an equivalence from $\mathbf{Pic}(Y)$ to the full subcategory of $\mathbf{Pic}(\cal X)$ consisting of fiberwise trivial line bundles.

\subsubsection{}
The above discussion, together with Lemma \ref{lem-proj-to-t1} and \ref{lem-ss-good}, shows that $\fr p^*$ defines an equivalence $\mathbf{Pic}(\op{Gr}_{T_1})\xrightarrow{\sim} \mathbf{Pic}_{\natural}(\op{Gr}_G)$. To see that this upgrades to an equivalence of factorization line bundles, we simply note that the map $\op{Gr}_G\underset{\op{Ran}(X)}{\times}\op{Gr}_G\rightarrow \op{Gr}_{T_1}\underset{\op{Ran}(X)}{\times}\op{Gr}_{T_1}$ again satisfies the hypothesis of \S\ref{sec-pic-push-indsch} after base change to a scheme. This finishes the proof that \eqref{eq-pullback-equiv} is an equivalence. \qed(Theorem \ref{thm-main})

\medskip


\begin{thebibliography}{9}

\bibitem[Ba12]{Ba12}
Barlev, Jonathan. ``D-modules on spaces of rational maps and on other generic data.'' \emph{arXiv preprint} arXiv:1204.3469 (2012).


\bibitem[BD91]{BD91}
Beilinson, Alexander, and Vladimir Drinfeld. \emph{Quantization of Hitchin’s integrable system and Hecke eigensheaves.} (1991): 1297-1301.

\bibitem[BD04]{BD04}
Beilinson, Alexander, and Vladimir G. Drinfeld. \emph{Chiral algebras.} Vol. 51. American Mathematical Soc., 2004.

\bibitem[Br04]{Br04}
Brion, Michel. ``Lectures on the geometry of flag varieties.'' \emph{Topics in cohomological studies of algebraic varieties.} Birkhäuser Basel, 2005. 33-85.

\bibitem[BD01]{BD01}
Brylinski, Jean-Luc, and Pierre Deligne. ``Central extensions of reductive groups by $K_2$.'' \emph{Publications math\'ematiques de l'IH\'ES} 94.1 (2001): 5-85.

\bibitem[Ca17]{Ca17}
Campbell, Justin. ``Unramified geometric class field theory and Cartier duality.'' \emph{arXiv preprint} arXiv:1710.02892 (2017).

\bibitem[DS95]{DS95}
Drinfeld, V. G., and Carlos Simpson. ``$ B $-structures on $ G $-bundles and local triviality.'' \emph{Mathematical Research Letters} 2.6 (1995): 823-829.

\bibitem[Fa03]{Fa03}
Faltings, Gerd. ``Algebraic loop groups and moduli spaces of bundles.'' \emph{Journal of the European Mathematical Society} 5.1 (2003): 41-68.

\bibitem[Ga13]{Ga13}
Gaitsgory, Dennis. ``Contractibility of the space of rational maps.'' \emph{Inventiones mathematicae} 191.1 (2013): 91-196.

\bibitem[Ga15]{Ga15}
Gaitsgory, Dennis. ``The Atiyah-Bott formula for the cohomology of the moduli space of bundles on a curve.'' \emph{arXiv preprint} arXiv:1505.02331 (2015).

\bibitem[Ga18]{Ga18}
Gaitsgory, D. ``Parameterization of factorizable line bundles by $K$-theory and motivic cohomology'' \emph{arXiv preprint} arXiv:1804.02567 (2018).

\bibitem[GL16]{GL16}
Gaitsgory, D., and S. Lysenko. ``Parameters and duality for the metaplectic geometric Langlands theory.'' \emph{arXiv preprint} arXiv:1608.00284 (2016).

\bibitem[HLP14]{HLP14}
Halpern-Leistner, Daniel, and Anatoly Preygel. ``Mapping stacks and categorical notions of properness.'' \emph{arXiv preprint} arXiv:1402.3204 (2014).

\bibitem[Ha13]{Ha13}
Hartshorne, Robin. \emph{Algebraic geometry.} Vol. 52. Springer Science \& Business Media, 2013.

\bibitem[MS82]{MS82}
Milne, James S., and K-Y. Shih. ``Conjugates of Shimura varieties.'' \emph{Hodge cycles, motives, and Shimura varieties.} Springer, Berlin, Heidelberg, 1982. 280-356.

\bibitem[MV07]{MV07}
Mirkovi\'c, Ivan, and Kari Vilonen. ``Geometric Langlands duality and representations of algebraic groups over commutative rings.'' \emph{Annals of mathematics} 166.1 (2007): 95-143.

\bibitem[Re12]{Re12}
Reich, Ryan. ``Twisted geometric Satake equivalence via gerbes on the factorizable Grassmannian.'' \emph{Representation Theory of the American Mathematical Society} 16.11 (2012): 345-449.

\bibitem[Ta19]{Ta19}
Tao, James. ``$n$-excisive functors, canonical connections, and line bundles on the Ran space.'' \emph{arXiv preprint} arXiv:1906.07976 (2019).

\bibitem[We64]{We64}
Weil, Andr\'e. ``Sur certains groupes d’opérateurs unitaires.'' \emph{Acta math} 111.143-211 (1964): 14.

\bibitem[We15]{We15}
Weissman, Martin H. ``L-groups and parameters for covering groups.'' \emph{arXiv preprint} arXiv:1507.01042 (2015).

\bibitem[Va]{Va}
Vakil, Ravi. ``The Rising Sea: Foundations of Algebraic Geometry.'' URL: \url{http://math.stanford.edu/~vakil/216blog/FOAGdec3014public.pdf}.

\bibitem[Zh09]{Zh09}
Zhu, Xinwen. ``Affine Demazure modules and T-fixed point subschemes in the affine Grassmannian.'' \emph{Advances in Mathematics} 221.2 (2009): 570-600.

\bibitem[Zh16]{Zh16}
Zhu, Xinwen. ``An introduction to affine Grassmannians and the geometric Satake equivalence.'' \emph{arXiv preprint} arXiv:1603.05593 (2016).

\end{thebibliography}
\end{document}